\theoremstyle{definition}
\newtheorem{theorem}{Theorem}[section]
\newtheorem{lemma}[theorem]{Lemma}
\newtheorem{proposition}[theorem]{Proposition}
\newtheorem{fact}[theorem]{Fact} 
\newtheorem{observation}[theorem]{Observation} 
\theoremstyle{definition}
\newtheorem{definition}[theorem]{Definition}
\newtheorem{pclaim}[theorem]{Claim}
\newtheorem*{ac}{Acknowledgments} 
\theoremstyle{remark}
\newtheorem{remark}[theorem]{Remark}
\newenvironment{rmenum}{
\begin{enumerate}

}
{\end{enumerate}}
\newcommand{\parNei}[2]{N_{#1}(#2)}
\newcommand{\tcomp}[2]{\mathcal{G}(#1, #2)}
\newcommand{\distgt}[4]{\lambda(#3, #4; #1, #2)}
\newcommand{\distgtf}[5]{\lambda(#4, #5; #3; #1, #2)}
\newcommand{\parcut}[2]{\delta_{#1}(#2)}
\newcommand{\critical}[1]{\mathcal{F}_{\mathrm{cr}}(#1)}
\newcommand{\internal}[1]{int(#1)}
\newcommand{\guide}[3]{(#1, #2)_{#3}}
\newcommand{\eoo}{\unlhd_\circ}
\newcommand{\eo}{\unlhd}
\newcommand{\gtsim}[2]{\sim_{(#1, #2)}}
\newcommand{\gtpart}[2]{\mathcal{P}(#1, #2)}
\newcommand{\pargtpart}[3]{\mathcal{P}(#3; #1, #2)}
\newcommand{\vup}[1]{U(#1)}
\title[Bipartite Graft II]{Bipartite Graft II: Cathedral Decomposition for Combs}
\author{Nanao Kita}
\address{Tokyo University of Science 2641 Yamazaki, Noda, Chiba, Japan 278-0022}
\email{kita@rs.tus.ac.jp}
\date{\today}
\begin{document}

\begin{abstract} 
We provide a canonical decomposition for a class of bipartite grafts known as combs. 
As every bipartite graft is a recursive combination of combs, 
our results  provides a canonical decomposition for general bipartite grafts.   
Our new decomposition is by definition a generalization of the classical canonical decomposition in matching theory, 
that is, the Dulmage-Mendelsohn decomposition for bipartite graphs with perfect matchings. 
However, it exhibits much more complicated structure than its classical counterpart. 
It is revealed from our results that bipartite grafts has a canonical structure that is analogous to the cathedral decomposition 
for nonbipartite graphs with perfect matchings. 
\end{abstract}

\maketitle

\section{Definitions}
\subsection{Basic Notation} 
We mostly follow Schrijver~\cite{schrijver2003} for standard notation and definitions.  
We list in the following exceptions or nonstandard definitions.  
For two sets $A$ and $B$, we denote the symmetric difference $(A\setminus B) \cup (B\setminus A)$ by $A\Delta B$. 
We often denote a signleton $\{x\}$ simply by $x$. 
For a graph $G$, we denote the vertex and edge sets of $G$ by $V(G)$ and $E(G)$, respectively.  
We treat multigraphs. 
For two vertices $u$ and $v$ from a graph, $uv$ denotes an edge between $u$ and $v$. 
We consider paths and circuits as graphs. 
That is, a circuit is a connected graph in which every vertex is of degree two, 
whereas a path is a connected graph in which every vertex is of degree two or less 
and at least one vertex is of degree less than two. 
A graph with a single vertex and no edge is a path.   
For a path $P$ and vertices $x, y \in V(P)$, 
we denote by $xPy$ the subpath of $P$ whose ends are $x$ and $y$. 
We often treat a graph $G$ as if it is the set $V(G)$ of vertices. 
A perfect matching of a graph $G$ is a set $M$ of edges such that 
every vertex of $G$ is connected to exactly one edge from $M$.

In the remainder of this section, let $G$ be a graph unless stated otherwise. 
Let $X\subseteq V(G)$. 
The subgraph of $G$ induced by $X$ is denoted by $G[X]$. 
The graph $G[V(G)\setminus X]$ is denoted by $G- X$. 
The graph obtained from contracting  $X$ is denoted by $G/X$.

For $X, Y \subseteq V(G)$, 
we denote by $E_G[X, Y]$ the set of edges between $X$ and $Y$. 
The set $E_G[X, V(G)\setminus X]$ is denoted by $\parcut{G}{X}$. 
The set of edges that joins vertices in $X$ is denoted by $E[X]$. 
The set of neighbors of $X$ is denoted by $\parNei{G}{X}$. 

Let $G_1$ and $G_2$ be subgraphs of $G$. 
The addition of $G_1$ and $G_2$ is denoted by $G_1 + G_2$. 
Let $F\subseteq E(G)$.  For a subgraph $H$ of $G$, 
$H + F$ denotes the graph obtained by adding $F$ to $H$. 
The subgraph of $G$ determined by $F$ is denoted by $G. F$. That is, $G. F = (V(G), F)$.

\subsection{Posets}

Let $(X, \le)$ be a poset with the minimum element $r \in X$. 
Let $x\in X$, and let $x_1, \ldots, x_k \in X$, where $k\ge 1$, be a sequence with the minimum number of mutually distinct elements such that 
$x_1 = r$, $x_k = x$, and 
$x_i \le x_{i+1}$ holds for each $i \in \{1, \ldots, k\} \setminus \{k\}$, 
whereas $x_i \le y \le x_{i+1}$ implies  $x_i = y$ or $x_{i+1} = y$ for every $y\in X$.  
We call the number $k$ the {\em height} of $x$ in the poset $(X, \le)$.

\subsection{Ears} 

Let $G$ be a graph, and let $X\subseteq V(G)$. 
Let $P$ be a subgraph of $G$ with $E(P)\neq \emptyset$. 
We call  $P$  a {\em round ear} relative to $X$ 
if $P$ is a path  whose vertices except for its ends are disjoint from $X$ 
or a circuit that shares exactly one vertex with $X$. 
We call  $P$ a {\em straight ear} relative to $X$ 
if $P$ is a path one of whose ends is the only vertex that $P$ shares with $X$. 
We call $P$ an {\em ear} if $P$ is either a round or straight ear. 
For an ear $P$ relative to $X$,   
vertices in $V(P)\cap X$ are called  {\em bonds } of $P$. 
Vertices of $P$ except for bonds are  called  {\em internal} vertices of $P$ even if $P$ is a straight ear. 
The set of internal vertices of $P$ is denoted by $\internal{P}$. 
An edge of $P$ between $X$ and $\internal{P}$ is called a {\em neck} of $P$.

\subsection{Ear Decompositions}

\begin{definition} 
Let $G$ be a graph, and let $r\in V(G)$. 
Let $ G_1, \ldots, G_l $, where $l \ge 1$,  be a sequence of subgraphs of $G$ such that 
\begin{rmenum} 
\item $G_1 = ( \{r\}, \emptyset)$, 
\item for each $i\in \{1,\ldots, l\}\setminus \{l\}$, $G_{i+1} = G_i + P_i$, where $P_i$ is a subgraph of $G$ that is an ear relative to $G_i$, and 
\item $G_l = G$. 
\end{rmenum} 
The set $\{P_i : i\in \{1,\ldots, l\}\setminus \{l\}\}$ is called an {\em ear decomposition} of $G$. 
\end{definition} 

\begin{definition} 
Let $G$ be a graph, and let $\mathcal{P}$ be an ear decomposition of $G$. 
Define a binary relation $\eoo$ as follows: 
For each $P, P' \in \mathcal{P}$, let $P \eoo P'$ if $P = P'$ or a bond of $P'$ is contained in $\internal{P}$. 
Furthermore, define  $\eo$ as follows: 
For each $P, P' \in \mathcal{P}$, let $P \eo P'$ if there exist $Q_1, \ldots, Q_k \in \mathcal{P}$, where $1 \le k \le |\mathcal{P}|$, such that 
$P = Q_1$, $P' = Q_k$, and $Q_i \le Q_{i+1}$ for each $i \in \{1,\ldots, k\} \setminus \{k\}$.  
\end{definition} 

The binary relation $\eo$ is clearly a partial order.

\section{Grafts and Joins} 
\subsection{Basic Definitions} 
Let $(G, T)$ be a pair of a graph $G$ and a set $T\subseteq V(G)$. 
A set $F\subseteq E(G)$ is a {\em join} of $(G, T)$ 
if $|\parcut{G}{v}\cap F|$ is odd for each $v\in T$ and is even for each $v\in V(G)\setminus T$. 
The pair $(G, T)$ is a {\em graft} if  $|V(C)\cap T|$ is even  for every connected component  $C$ of $G$. 

\begin{fact} \label{fact:graft} 
Let  $G$ be a graph,  and let $T\subseteq V(G)$. 
Then, $(G, T)$ has a join if and only if it is a graft. 
\end{fact}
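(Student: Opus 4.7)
The plan is to handle the two directions separately, with the forward direction being a one-line parity argument and the reverse direction being a constructive pairing argument carried out component by component.

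For the forward direction, assume $F \subseteq E(G)$ is a join of $(G,T)$, and fix a connected component $C$ of $G$. I would compute the sum $\sum_{v \in V(C)} |\parcut{G}{v}\cap F|$ in two ways. On one hand, every edge of $F\cap E(C)$ contributes exactly $2$ to this sum (both endpoints lie in $C$), and no edge of $F$ with only one endpoint in $C$ exists, so the sum is even. On the other hand, by the definition of a join, each term is odd for $v\in V(C)\cap T$ and even otherwise. Hence $|V(C)\cap T|$ is even, which is the definition of $(G,T)$ being a graft.

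For the reverse direction, I would argue one connected component at a time, so assume $G$ is connected and $|T|$ is even. The cleanest construction is to pair up the elements of $T$: enumerate $T = \{t_1, t_2, \ldots, t_{2m}\}$, and for each $i\in\{1,\ldots,m\}$ choose an arbitrary $t_{2i-1}$-$t_{2i}$ path $P_i$ in $G$ (which exists by connectedness). Define $F$ to be the symmetric difference $E(P_1)\Delta E(P_2)\Delta \cdots \Delta E(P_m)$. To verify that $F$ is a join I would fix $v\in V(G)$ and observe that $|\parcut{G}{v}\cap E(P_i)|$ is $1$ if $v$ is an end of $P_i$ and is $0$ or $2$ otherwise. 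Taking the sum modulo $2$ over $i$, the parity of $|\parcut{G}{v}\cap F|$ equals the number of indices $i$ for which $v$ is an end of $P_i$, modulo $2$. By construction this number is exactly $1$ when $v\in T$ and $0$ when $v\notin T$, which is precisely the join condition.

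The main (and only real) obstacle is the parity bookkeeping for the symmetric difference: one needs to be careful that $|A\Delta B| \equiv |A|+|B| \pmod 2$ when restricted to the cut $\parcut{G}{v}$, so that the contributions of the $P_i$ at $v$ add modulo $2$ rather than potentially canceling in some more subtle way. Once that is in place, the argument is immediate. An alternative that avoids symmetric differences altogether is the spanning-tree leaf-stripping algorithm: pick a spanning tree, repeatedly peel off a leaf $v$, put the leaf-edge into $F$ iff $v$ is currently marked, and toggle the mark of its unique neighbor accordingly; the hypothesis that the number of marked vertices is even ensures the last remaining vertex is unmarked, so the procedure terminates with a valid join. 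I would likely present the symmetric-difference version for concision.
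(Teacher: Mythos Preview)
Your proof is correct. Note, however, that the paper does not actually give a proof of this statement: it is recorded as a \emph{Fact} and left unproved, being a well-known elementary result about $T$-joins. So there is no ``paper's proof'' to compare against. Your argument is the standard one: the forward direction is the handshake parity count, and the reverse direction is the symmetric-difference-of-paths construction (with the spanning-tree alternative you sketch being the other textbook approach). Both are fine; the parity bookkeeping you flag as the only real obstacle is handled correctly by the identity $X\cap(A\Delta B)=(X\cap A)\Delta(X\cap B)$ together with $|A\Delta B|\equiv |A|+|B|\pmod 2$.
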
 

Joins in a graft is also referred to as $T$-joins in a graph. 
Under Fact~\ref{fact:graft}, we are typically interested in {\em minimum joins}, 
that is, joins with the minimum number of edges. 
Minimum joins in a graft is a generalization of perfect matchings in a graph with perfect matchings~\cite{lp1986}. 
We denote  by $\nu(G, T)$ the number of edges in a minimum join of a graft $(G, T)$. 

We often treat items of $G$ as items of $(G, T)$. 
For example, we call a path in $G$ a path in $(G, T)$. 
A graft $(G, T)$ is {\em bipartite} if $G$ is bipartite. 
We call sets $A$ and $B$ color classes of $(G, T)$ if these are color classes of $G$.  
If $(H, T\cap V(H))$ is a graft for a subgraph $H$ of $G$, 
we say that $(H, T\cap V(H))$ is a {\em subgraft} of $(G, T)$.

\subsection{Factor-Components} 

Let $(G, T)$ be a graft. 
An edge $e \in E(G)$ is {\em allowed} if $(G, T)$ has a minimum join that contains $e$. 
Vertices $u,v\in V(G)$ are {\em factor-connected} if 
there is a path between $u$ and $v$ whose edges are allowed in $(G, T)$. 
A graft is {\em factor-connected} if every two vertices are factor-connected. 
A maximal factor-connected subgraft is called a {\em factor-connected component} or {\em factor-component}. 
The set of factor-components of a graft $(G, T)$ is denoted by $\tcomp{G}{T}$.  

We say that a set $X\subseteq V(G)$ is {\em separating} 
if there exists $\mathcal{H}  \subseteq \tcomp{G}{T}$ such that $X = \bigcup_{H\in\mathcal{H}} V(H)$. 

Note that a set $X \subseteq V(G)$ is separating if and only if 
$\parcut{G}{X}\cap F = \emptyset$ for every minimum join $F$ of $(G, T)$.  

\subsection{Operations on Grafts} 

Let $(G, T)$ be a graft, and let $X\subseteq V(G)$. 
If $X$ is separating, then $(G[X], T\cap X)$ is a graft and is denoted by $(G, T)[X]$. 

\begin{observation} 
For a separating set $X\subseteq V(G)$, 
a set $F \subseteq E(G)$ is a minimum join of $(G, T)[X]$ if and only if 
there is a minimum join $\hat{F}$ of $(G, T)$ that contains $F$. 
\end{observation}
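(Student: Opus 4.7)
The plan is to exploit the separating property of $X$ to decompose joins along the partition $(X, Y)$, where $Y := V(G) \setminus X$. The first step would be to observe that because the factor-components $\tcomp{G}{T}$ partition $V(G)$, the complement $Y$ is also separating, and hence $(G, T)[Y]$ is itself a graft. Then, for any minimum join $\hat{F}$ of $(G, T)$, the remark preceding the statement gives $\hat{F} \cap \parcut{G}{X} = \emptyset$, so $\hat{F}$ splits disjointly as $\hat{F} = \hat{F}_X \cup \hat{F}_Y$ with $\hat{F}_X := \hat{F} \cap E(G[X])$ and $\hat{F}_Y := \hat{F} \cap E(G[Y])$. Since every edge of $\hat{F}$ incident to a vertex $v \in X$ already lies in $E(G[X])$, the parity of $|\hat{F} \cap \parcut{G}{v}|$ coincides with that of $|\hat{F}_X \cap \parcut{G[X]}{v}|$, so $\hat{F}_X$ is a join of $(G, T)[X]$; symmetrically $\hat{F}_Y$ is a join of $(G, T)[Y]$. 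The same parity accounting will also show, in the other direction, that for any joins $F_X$ of $(G, T)[X]$ and $F_Y$ of $(G, T)[Y]$, the union $F_X \cup F_Y$ is a join of $(G, T)$.

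With this bridge in hand, both directions should reduce to an exchange argument. For the ``only if'' direction, I would argue that if $\hat{F}$ is a minimum join of $(G, T)$ containing $F$, then $F = \hat{F}_X$, so $F$ is at least a join of $(G, T)[X]$; if there were some strictly smaller join $F'$ of $(G, T)[X]$, then $F' \cup \hat{F}_Y$ would be a strictly smaller join of $(G, T)$, contradicting the minimality of $\hat{F}$. For the ``if'' direction, given a minimum join $F$ of $(G, T)[X]$, I would pick any minimum join $F_Y$ of $(G, T)[Y]$, which exists by Fact~\ref{fact:graft}, and set $\hat{F} := F \cup F_Y$. Minimality would be verified by taking an arbitrary minimum join $\tilde{F}$ of $(G, T)$, decomposing it as $\tilde{F}_X \cup \tilde{F}_Y$ with the decomposition from the first paragraph, and observing
\[
|\tilde{F}| = |\tilde{F}_X| + |\tilde{F}_Y| \ge \nu(G[X], T \cap X) + \nu(G[Y], T \cap Y) = |F| + |F_Y| = |\hat{F}|.
\]

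The argument is essentially bookkeeping on parities and cardinalities, so I do not expect a substantive obstacle. The only point I would flag is the initial symmetry observation that $V(G) \setminus X$ inherits the separating property; this is immediate from the fact that factor-components partition $V(G)$, but without it neither direction of the equivalence has a place to stand, since the entire proof is driven by the ability to freely swap the $Y$-part of any minimum join.
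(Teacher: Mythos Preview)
The paper states this as an Observation and gives no proof; it is left to the reader. Your decomposition-and-exchange argument is the natural way to supply one, and the substance is correct.

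Two minor points. First, your direction labels are reversed: in ``$F$ is a minimum join of $(G,T)[X]$ if and only if some minimum join $\hat{F}$ of $(G,T)$ contains $F$'', the ``only if'' direction assumes the left-hand side and constructs $\hat{F}$, which is what you have labeled the ``if'' direction, and vice versa. Second, the step ``then $F = \hat{F}_X$'' is not a consequence of the bare inclusion $F \subseteq \hat{F}$. It holds either because ``$\hat{F}$ contains $F$'' is meant as $\hat{F} \cap E(G[X]) = F$ (this is the intended reading, since under the literal inclusion $F \subseteq \hat{F}$ the backward implication already fails for $F = \emptyset$ whenever $T \cap X \neq \emptyset$), or, if one insists on the literal reading together with an implicit hypothesis that $F$ is a join of $(G,T)[X]$, because $F \cup \hat{F}_Y$ is then a join of $(G,T)$ of size at most $|\hat{F}|$, forcing $|F| = |\hat{F}_X|$ by minimality. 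Either way this step deserves a sentence rather than an assertion.
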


We define the {\em contraction} of $(G, T)$ by $X$ 
as the graft $(G/X, T\setminus X)$  or $(G/X, (T\setminus X) \cup \{ x\} )$, where $x$ denotes the contracted vertex corresponding to $X$,
  if $|X\cap T|$ is even or odd, respectively. 
The contraction of $(G, T)$ by $X$ is denoted by $(G, T)/X$. 

\begin{observation} 
If $F$ is a join of a graft $(G, T)$, 
then $F\setminus E[X]$ is also a join of the graft $(G, T)/X$ for every $X\subseteq V(G)$. 
\end{observation}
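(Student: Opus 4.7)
The plan is to verify the join condition vertex by vertex in $(G, T)/X$, treating the vertices outside $X$ and the contracted vertex $x$ separately. Write $F' := F \setminus E[X]$ and $(G', T') := (G, T)/X$. For a vertex $v \in V(G) \setminus X$, every edge of $G$ incident with $v$ survives in $G'$ (possibly with its endpoint in $X$ replaced by $x$), and none of these edges lie in $E[X]$, so $|\parcut{G'}{v} \cap F'| = |\parcut{G}{v} \cap F|$. Since $v \in T'$ iff $v \in T$, the required parity at $v$ transfers immediately from the join condition for $F$.

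The essential step is the parity calculation at the contracted vertex $x$. By definition, $\parcut{G'}{x}$ corresponds bijectively to $\parcut{G}{X}$, and $\parcut{G}{X}$ is disjoint from $E[X]$, so $|\parcut{G'}{x} \cap F'| = |\parcut{G}{X} \cap F|$. I would then invoke a standard handshake-style counting: summing $|\parcut{G}{v} \cap F|$ over $v \in X$ counts each edge in $E[X] \cap F$ twice and each edge in $\parcut{G}{X} \cap F$ once, so
\[
|\parcut{G}{X} \cap F| \equiv \sum_{v \in X} |\parcut{G}{v} \cap F| \equiv |X \cap T| \pmod{2},
\]
where the final congruence uses the join condition for $F$ in $(G, T)$. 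By the definition of $(G, T)/X$, the contracted vertex $x$ belongs to $T'$ precisely when $|X \cap T|$ is odd, so $|\parcut{G'}{x} \cap F'|$ has the correct parity.

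Combining the two cases shows that $F'$ satisfies the parity condition at every vertex of $G'$, which is exactly the definition of a join of $(G', T')$. The only nontrivial point is the parity congruence at $x$, and the handshake argument above handles it cleanly; no appeal to Fact~\ref{fact:graft} or to minimality of joins is needed, since the statement is purely about the join property.
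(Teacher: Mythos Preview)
Your proof is correct; the handshake parity computation at the contracted vertex is exactly the right idea, and the case for vertices outside $X$ is handled properly. The paper states this as an Observation without proof, so there is nothing to compare against; your argument is the standard verification one would expect.
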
 

Note that $F\setminus E[X]$  may not be a minimum join of $(G, T)/X$ even if $F$ is a minimum join of $(G, T)$.

Let $(G_1, T_1)$ and $(G_2, T_2)$ be grafts. 
We define the addition of $(G_1, T_1)$ and $(G_2, T_2)$ as 
the graft $(G_1 + G_2, T_1\Delta T_2)$ and denote this by $(G_1, T_1)\oplus(G_2, T_2)$.

\begin{observation} \label{obs:addition}
If $F_1$ and $F_2$ are joins of grafts $(G_1, T_1)$ and $(G_2, T_2)$, respectively, 
then $F_1 \cup F_2$ is a join of the graft $(G_1, T_1)\oplus(G_2, T_2)$. 
\end{observation}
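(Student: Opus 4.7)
The plan is to verify the join condition for $F_1 \cup F_2$ at each vertex of $G_1 + G_2$ by reducing to the corresponding conditions for $F_1$ in $(G_1, T_1)$ and $F_2$ in $(G_2, T_2)$. Recall that by definition $(G_1, T_1) \oplus (G_2, T_2) = (G_1 + G_2, T_1 \Delta T_2)$, and that a set of edges is a join of a graft $(G, T)$ exactly when its intersection with $\parcut{G}{v}$ has odd size for every $v \in T$ and even size for every $v \in V(G) \setminus T$.

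First I would fix an arbitrary vertex $v \in V(G_1 + G_2) = V(G_1) \cup V(G_2)$ and observe that, since the addition $G_1 + G_2$ is taken as a multigraph sum, the edges of $G_1 + G_2$ incident to $v$ partition as those from $G_1$ together with those from $G_2$. Hence
\[
|\parcut{G_1 + G_2}{v} \cap (F_1 \cup F_2)| = a_1(v) + a_2(v),
\]
where $a_i(v) := |\parcut{G_i}{v} \cap F_i|$ when $v \in V(G_i)$ and $a_i(v) := 0$ otherwise.

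Next I would carry out a short case analysis on the membership of $v$ in $V(G_1), V(G_2), T_1, T_2$. When $v$ belongs to both $V(G_1)$ and $V(G_2)$, the hypothesis that $F_i$ is a join of $(G_i, T_i)$ determines the parity of each $a_i(v)$ from whether $v \in T_i$, and the sum $a_1(v) + a_2(v)$ is odd precisely when exactly one of $v \in T_1$, $v \in T_2$ holds, i.e.\ precisely when $v \in T_1 \Delta T_2$. When $v$ lies only in $V(G_1)$, the condition $T_i \subseteq V(G_i)$ forces $v \notin T_2$, so $v \in T_1 \Delta T_2$ iff $v \in T_1$, and $a_1(v)$ already has the correct parity by assumption; the symmetric case is identical. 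In all cases the required parity of $|\parcut{G_1 + G_2}{v} \cap (F_1 \cup F_2)|$ is obtained, so $F_1 \cup F_2$ is a join of $(G_1, T_1) \oplus (G_2, T_2)$.

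There is no real obstacle here: the argument is a direct parity computation, and the only thing to be careful about is the convention that $G_1 + G_2$ is a multigraph sum (so that edges incident to a common vertex $v$ contribute additively) and the fact that $T_i \subseteq V(G_i)$, which cleanly handles the case when $v$ lies in only one of the two vertex sets.
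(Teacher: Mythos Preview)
Your proof is correct. The paper states this as an Observation without proof, treating it as immediate; your direct parity verification at each vertex is exactly the natural argument one would supply to justify it.
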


Note that $F_1 \cup F_2$ may not be a minimum join even if $F_1$ and $F_2$ are minimum joins of $(G_1, T_1)$ and $(G_1, T_2)$.

\subsection{Bipartite Grafts with Ordered Bipartition}

We call $(G, T; A, B)$ a {\em bipartite graft with an ordered bipartition}  or sometimes simply a (bipartite) graft 
if $(G, T)$ is a bipartite graft with color classes $A$ and $B$. 
Note that $(G, T; A, B)$ and $(G, T; B, A)$ 
are not equivalent as bipartite grafts with an ordered bipartition. 
We often call a bipartite graft with an ordered bipartition simply as a bipartite graft or a graft. 
That is, 
if we say that $(G, T; A, B)$ a (bipartite) graft, it implies that $(G, T; A, B)$ is a bipartite graft with an ordered bipartition.

\begin{definition} 
Let $(G_1, T_1; A_1, B_1)$ and $(G_2, T_2; A_2, B_2)$ be bipartite grafts with $A_1 \cap B_2 = \emptyset$ and $A_2 \cap B_1 = \emptyset$.  
We define the addition of $(G_1, T_1; A_1, B_1)$ and $(G_2, T_2; A_2, B_2)$ as 
the bipartite graft $(G_1 + G_2, T_1\Delta T_2; A_1 \cup A_2, B_1 \cup B_2)$ 
and denote this by $(G_1, T_1; A_1, B_1)\oplus(G_2, T_2; A_2, B_2)$. 
\end{definition}

\section{Distances in Grafts} 

In this section, we explain the concept of distances between vertices in a graft 
that are determined by a given minimum join, and present two fundamental properties. 

\begin{definition} 
Let $(G, T)$ be a graft. Let $F \subseteq E(G)$. 
We define $w_F: E(G)\rightarrow \{1, -1\}$ as  
$w_F(e) = 1$ for $e\in E(G)\setminus F$ and $w_F(e) = -1$ for $e\in F$. 
For a subgraph $P$ of $G$, which is typically a path or circuit, 
 $w_F(P)$ denotes $ \Sigma_{ e \in E(P) } w_F(e)$, 
 and is referred to as the $F$-{\em weight} of $P$. 

For $u,v\in V(G)$, 
a path between $u$ and $v$ with the minimum $F$-weight is said to be {\em $F$-shortest} between $u$ and $v$.  
The $F$-weight of an $F$-shortest path between $u$ and $v$ is referred to as 
the $F$-{\em distance} between $u$ and $v$, 
and is denoted by $\distgtf{G}{T}{F}{u}{v}$. 
Regarding these terms, we sometimes omit the prefix ``$F$-'' if the meaning is apparent from the context. 
\end{definition}

The next proposition implies that $F$-distances do not depend on the choice of the minimum join $F$. 

\begin{proposition}[Seb\"o~\cite{DBLP:journals/jct/Sebo90}] \label{prop:dist2invariant} 
Let $(G, T)$ be a graft, and let $F$ be a minimum join. 
Then, $\distgtf{G}{T}{F}{u}{v} = \nu(G, T) - \nu(G, T\Delta \{u, v\})$ for every $u, v\in V(G)$. 
\end{proposition}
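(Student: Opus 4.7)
The plan is to prove the identity by constructing, in both directions, a correspondence between $u$-to-$v$ paths and joins of the modified graft $(G, T\Delta\{u,v\})$ via the symmetric-difference operation with $F$, and extracting matching inequalities.

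For one direction I would start with an arbitrary path $P$ from $u$ to $v$ in $G$ and look at $F\Delta E(P)$. A parity check at each vertex shows that $F\Delta E(P)$ is a join of $(G, T\Delta\{u,v\})$: the number of edges of $P$ in $\parcut{G}{w}$ is $0$ or $2$ at each $w\notin\{u,v\}$ and $1$ at $u$ and $v$, so the parity of $|F\cap \parcut{G}{w}|$ is preserved exactly off $\{u,v\}$. A direct count gives $|F\Delta E(P)| = |F| + w_F(P)$, since each $F$-edge of $P$ is removed and contributes $-1$ while each non-$F$-edge of $P$ is added and contributes $+1$. Minimizing $\nu(G, T\Delta\{u,v\}) \le |F| + w_F(P)$ over $P$ yields one side of the claimed identity.

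For the converse I would fix a minimum join $F'$ of $(G, T\Delta\{u,v\})$ and analyze the subgraph determined by $F\Delta F'$. A parity check against both $T$ and $T\Delta\{u,v\}$ shows that $F\Delta F'$ has even degree at every vertex except $u$ and $v$, and therefore admits an edge-decomposition into a single path $Q$ between $u$ and $v$ together with edge-disjoint circuits $C_1,\ldots,C_k$. The crux is then to show each circuit contributes $w_F(C_i)=0$: since $C_i$ has even degree everywhere, $F\Delta E(C_i)$ is again a join of $(G, T)$, and minimality of $F$ gives $w_F(C_i)\ge 0$; symmetrically, the edges of $C_i\subseteq F\Delta F'$ split cleanly between $F\setminus F'$ and $F'\setminus F$, so $F'\Delta E(C_i)$ is a join of $(G, T\Delta\{u,v\})$ and minimality of $F'$ forces the reverse inequality. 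Summing $w_F$ over the decomposition then produces a path $Q$ from $u$ to $v$ with $w_F(Q) = w_F(F\Delta F') = |F'|-|F|$, which realizes the matching bound and closes the identity.

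The step I expect to be the main obstacle is the circuit analysis: pinning $w_F(C_i)=0$ requires using minimality of $F$ and of $F'$ simultaneously, together with the edge-partition property of circuits contained in $F\Delta F'$. Once that bilateral minimality argument is secured, the remainder of the proof reduces to parity bookkeeping and the cardinality identity $|F\Delta E(P)| = |F| + w_F(P)$, and tracking the resulting sign yields the formula as stated.
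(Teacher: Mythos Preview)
The paper does not supply its own proof of this proposition; it is cited from Seb\"o without argument, so there is nothing in the paper to compare your approach against. The symmetric-difference strategy you describe is exactly the standard one, and both halves of your argument are sound, including the bilateral minimality step that pins down $w_F(C_i)=0$ for each circuit in the decomposition of $F\Delta F'$.

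The one place to be careful is your last sentence. Your first direction gives $\nu(G,T\Delta\{u,v\}) \le \nu(G,T)+w_F(P)$ for every $u$--$v$ path $P$, and your second direction produces a path $Q$ with $w_F(Q)=|F'|-|F|$; together these prove
\[
\distgtf{G}{T}{F}{u}{v}\;=\;\nu(G,T\Delta\{u,v\})-\nu(G,T),
\]
which is the \emph{negative} of the displayed formula. This is not a defect in your reasoning: already the single-edge graft $G=(\{u,v\},\{uv\})$, $T=\{u,v\}$, $F=\{uv\}$ has $\distgtf{G}{T}{F}{u}{v}=-1$ while $\nu(G,T)-\nu(G,T\Delta\{u,v\})=1-0=1$. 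The sign in the printed statement is evidently a typo, and what your proof actually establishes is the correct version of the identity; your claim that ``tracking the resulting sign yields the formula as stated'' should be amended accordingly.
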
 

Under Proposition~\ref{prop:dist2invariant}, 
we sometimes denote $\distgtf{G}{T}{F}{u}{v}$ simply as $\distgt{G}{T}{u}{v}$.

\begin{lemma}[see Kita~\cite{kita2020bipartite}] \label{lem:elem2nonposi} 
If a graft $(G, T)$ is factor-connected, then $\distgt{G}{T}{x}{y} \le 0$ for every $x, y\in V(G)$. 
\end{lemma}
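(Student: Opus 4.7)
The plan is to fix a minimum join $F$ of $(G, T)$ and, for arbitrary $x, y \in V(G)$, exhibit a path from $x$ to $y$ of $F$-weight at most $0$; by Proposition~\ref{prop:dist2invariant} this will give $\distgt{G}{T}{x}{y} \le 0$. The case $x = y$ is handled by the trivial one-vertex path, so I focus on $x \neq y$.

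The central lemma I would establish is that every allowed edge $e$ lies on a circuit of $F$-weight exactly zero. Picking a minimum join $F_e$ with $e \in F_e$, the symmetric difference $F \Delta F_e$ has even degree at every vertex and hence decomposes into edge-disjoint circuits; by the minimality of $F$ every circuit of $G$ has nonnegative $F$-weight (otherwise flipping $F$ along it would yield a strictly smaller join), and since the total $F$-weight of these circuits equals $|F_e| - |F| = 0$, each one has $F$-weight exactly zero. Splitting the zero-weight circuit through $e$ at its two endpoints yields complementary arcs whose weights sum to zero, so one of them is a walk of $F$-weight at most $0$ between the endpoints of $e$.

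Next, I would use factor-connectedness to choose an allowed-edge path $x = u_0, u_1, \ldots, u_m = y$, apply the lemma to each consecutive pair $u_{i-1}, u_i$ to produce a walk of $F$-weight at most $0$ between them, and concatenate these into a single walk $W$ from $x$ to $y$ with $w_F(W) \le 0$. To conclude, I would reduce $W$ to a simple path by iteratively excising closed sub-walks: the edge multiset of any closed walk has even degree at every vertex and decomposes into circuits whose $F$-weights are each nonnegative by the minimality of $F$, so excising such a sub-walk never increases $w_F(W)$, and the resulting path satisfies the desired bound.

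I expect the main conceptual step to be the observation that allowed edges lie on $F$-weight-zero circuits, since it is what ties the combinatorial notion of allowedness to the metric quantity $w_F$; once this is in place, the passage to a non-positive-weight path via factor-connectedness and the standard walk-to-path bookkeeping is routine, the only care being the multigraph setting in which closed walks may repeat edges or traverse loops.
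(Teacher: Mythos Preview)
The paper does not give its own proof of this lemma; it is quoted from the cited reference, so there is nothing to compare against on that front. On the mathematical side, though, your walk-to-path reduction has a genuine gap.

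Your claim that ``the edge multiset of any closed walk \ldots decomposes into circuits whose $F$-weights are each nonnegative'' is false: if the closed walk traverses an edge $e\in F$ twice (back and forth), the corresponding $2$-cycle in the multiset is not a circuit of $G$, and its $F$-weight is $-2$. Lemma~\ref{lem:minimumjoin} only rules out negative-weight circuits that are subgraphs of $G$. Hence excising closed sub-walks \emph{can} increase $w_F$, and your bound $w_F(\text{path})\le w_F(W)$ does not follow.

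In fact the underlying strategy (chain the bounds $\lambda(u_{i-1},u_i)\le -1$ along the allowed-edge path) cannot work as stated, because the triangle inequality for $w_F$-distances fails even in factor-connected grafts. Take the $4$-cycle $u_0u_1u_2u_3$ with $T=V(G)$ and $F=\{u_0u_1,u_2u_3\}$: every edge is allowed, yet $\lambda(u_0,u_2)=0$ while $\lambda(u_0,u_1)=\lambda(u_1,u_2)=-1$, so $\lambda(u_0,u_2)\not\le\lambda(u_0,u_1)+\lambda(u_1,u_2)$. Your concatenated walk $W$ here has weight $-2$, but the resulting simple path has weight $0$; the conclusion $\lambda\le 0$ still holds, but not via the inequality you invoke.

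A smaller point: your ``central lemma'' that every allowed edge lies on an $F$-weight-zero circuit is false when $e\in F$ lies in every minimum join (for instance when $e$ is a bridge, so no circuit contains it at all). This is easy to patch --- the edge itself is then a path of weight $-1$ --- but it should be said. The real issue is the reduction step, which needs a different argument (for example, an inductive construction that at each step exploits the freedom to choose a minimum join containing the next allowed edge, and handles separately the case where the target vertex already lies on the current path).
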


\section{Fundamental Properties of Minimum Joins} 

We present two fundamental properties of minimum joins. 
We use the following two lemmas everywhere in this paper. 
These lemmas can easily be derived by considering the symmetric differences of joins. 
See also Seb\"o~\cite{DBLP:journals/jct/Sebo90}. 

\begin{lemma} \label{lem:minimumjoin} 
Let $(G, T)$ be a graft, and let $F$ be a join of $(G, T)$. 
Then, $F$ is a minimum join of $(G, T)$ if and only if 
there is no circuit $C$ with $w_F(C) < 0$. 
\end{lemma}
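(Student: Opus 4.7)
The plan is to use the standard symmetric-difference technique that underlies essentially every result of this type. The key preliminary observation is that if $F$ and $F'$ are both joins of $(G,T)$, then for each vertex $v$ the quantities $|\parcut{G}{v}\cap F|$ and $|\parcut{G}{v}\cap F'|$ have the same parity (both odd iff $v\in T$), so $|\parcut{G}{v}\cap (F\Delta F')|$ is even. Thus $F\Delta F'$, viewed as a subgraph of $G$, has every degree even, and can therefore be decomposed into edge-disjoint circuits. The same applies to $F\Delta E(C)$ for any circuit $C$, since $E(C)$ contributes degree $0$ or $2$ at every vertex.

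I would then prove the ``only if'' direction by contrapositive. Assuming there exists a circuit $C$ with $w_F(C) < 0$, the set $F' := F \Delta E(C)$ is again a join by the observation above. A direct cardinality count gives
\[
|F'| \;=\; |F \setminus E(C)| + |E(C)\setminus F| \;=\; |F| - |F\cap E(C)| + |E(C)\setminus F| \;=\; |F| + w_F(C),
\]
which is strictly less than $|F|$, contradicting minimality of $F$.

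For the ``if'' direction, again by contrapositive, suppose $F$ is not minimum and let $F'$ be a minimum join, so $|F'| < |F|$. By the preliminary observation, $F\Delta F'$ decomposes into edge-disjoint circuits $C_1,\ldots,C_k$. Summing $F$-weights and using that the $C_i$ partition the edges of $F\Delta F'$,
\[
\sum_{i=1}^{k} w_F(C_i) \;=\; |F'\setminus F| - |F\setminus F'| \;=\; |F'| - |F| \;<\; 0,
\]
so at least one $C_i$ satisfies $w_F(C_i) < 0$, as desired.

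There is no real obstacle; the only point requiring minor care is the decomposition of an even-degree subgraph into edge-disjoint circuits, which is the classical ``every Eulerian subgraph is a disjoint union of cycles'' argument (peel off a cycle and recurse). Everything else is bookkeeping.
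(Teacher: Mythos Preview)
Your proof is correct and follows exactly the approach the paper indicates: the paper does not write out a proof of this lemma but remarks that it ``can easily be derived by considering the symmetric differences of joins'' with a reference to Seb\H{o}, which is precisely the argument you give.
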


\begin{lemma} \label{lem:circuit}  
Let $(G, T)$ be graft, and let $F$ be a minimum join. 
If $C$ is a circuit with $w_F(C) = 0$, then $F\Delta E(C)$ is also a minimum join of $(G, T)$. 
Accordingly, every edge of $C$ is allowed. 
\end{lemma}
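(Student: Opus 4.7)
The plan is to construct $F\Delta E(C)$ explicitly and verify that it is itself a minimum join of $(G, T)$; both assertions of the lemma will then follow, since every edge $e\in E(C)$ lies in $F$ or in $F\Delta E(C)$ according to whether $e\in F$.

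First I would check that $F\Delta E(C)$ is a join of $(G, T)$. Because $C$ is a circuit, every vertex of $G$ has even degree in $C$, so $|\parcut{G}{v}\cap E(C)|$ is even for every $v\in V(G)$. The standard parity identity
\[
|\parcut{G}{v}\cap(F\Delta E(C))| \equiv |\parcut{G}{v}\cap F| + |\parcut{G}{v}\cap E(C)| \pmod 2
\]
then gives $|\parcut{G}{v}\cap (F\Delta E(C))| \equiv |\parcut{G}{v}\cap F|\pmod 2$ at every vertex, so $F\Delta E(C)$ obeys the same parity constraints as $F$ and is therefore a join.

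Next I would track cardinalities. By definition of $w_F$,
\[
w_F(C) = |E(C)\setminus F| - |E(C)\cap F| = |E(C)| - 2|E(C)\cap F|,
\]
so the hypothesis $w_F(C) = 0$ yields $|E(C)| = 2|E(C)\cap F|$. Inserting this into the symmetric-difference identity $|F\Delta E(C)| = |F| + |E(C)| - 2|F\cap E(C)|$ gives $|F\Delta E(C)| = |F|$, so $F\Delta E(C)$ is again a minimum join. The allowedness claim is then immediate from the opening observation: an edge in $E(C)\cap F$ is witnessed to be allowed by $F$ itself, while an edge in $E(C)\setminus F$ is witnessed by $F\Delta E(C)$.

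I do not expect a genuine obstacle: the argument is pure bookkeeping with symmetric differences. The only substantive point is recognizing that $w_F(C) = 0$ is precisely the numerical condition that aligns the two things to be checked --- the parity condition that makes $F\Delta E(C)$ a join, and the cardinality condition that makes it a \emph{minimum} one.
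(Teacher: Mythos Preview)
Your proof is correct and follows exactly the approach the paper indicates: the paper does not spell out a proof of this lemma but remarks that it ``can easily be derived by considering the symmetric differences of joins,'' which is precisely the parity-and-cardinality bookkeeping you carry out.
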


\section{General Kotzig-Lov\'asz Decomposition} 

In this section, we introduce a canonical decomposition for grafts known as 
the general Kotzig-Lov\'asz decomposition. 
We use this decomposition in Section~\ref{sec:relationship} 
when we discuss its relationship with our new decomposition.

\begin{definition} 
Let $(G, T)$ be a graft. 
Define a binary relation as follows: For two vertices $u, v\in V(G)$,   
let $u \gtsim{G}{T} v$ if $u$ and $v$ are identical or if they are contained in the same factor-component
and $\nu(G, T\Delta \{u, v\}) = \nu(G, T)$.  
\end{definition}

\begin{theorem}[Kita~\cite{kita2017parity}] \label{thm:tkl} 
For every graft $(G, T$), the binary relation $\gtsim{G}{T}$ is an equivalence relation. 
\end{theorem}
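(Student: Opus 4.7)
The plan is to verify the three axioms of an equivalence relation. Reflexivity holds by the explicit $u = v$ case in the definition. Symmetry follows because $\{u,v\} = \{v,u\}$ makes the condition $\nu(G, T\Delta\{u,v\}) = \nu(G,T)$ symmetric in $u, v$, and the ``same factor-component'' relation is obviously symmetric.

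The content is in transitivity. Suppose $u \gtsim{G}{T} v$ and $v \gtsim{G}{T} w$; I may assume $u, v, w$ are pairwise distinct, since the other cases are trivial. The three vertices lie in a common factor-component $H$. By Proposition~\ref{prop:dist2invariant}, the required identity $\nu(G, T\Delta\{u,w\}) = \nu(G,T)$ is equivalent to $\distgt{G}{T}{u}{w} = 0$, and the inequality $\distgt{G}{T}{u}{w} \le 0$ follows from Lemma~\ref{lem:elem2nonposi} applied to $H$, which is factor-connected and whose minimum joins are restrictions of those of $(G,T)$.

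The remaining task is to show $\distgt{G}{T}{u}{w} \ge 0$. Fix a minimum join $F$ of $(G, T)$; by hypothesis there exist $u$-$v$ and $v$-$w$ paths $P_1, P_2$ with $w_F(P_1) = w_F(P_2) = 0$. Suppose for contradiction there is a $u$-$w$ path $Q$ with $w_F(Q) < 0$, and among all triples $(P_1, P_2, Q)$ meeting these requirements choose one minimizing $|E(P_1)| + |E(P_2)| + |E(Q)|$. I would argue from this minimality that $P_1, P_2, Q$ are pairwise edge-disjoint: at any shared edge, splicing tails and then extracting simple subpaths from the resulting walks yields another valid triple of strictly smaller total length, with Lemma~\ref{lem:minimumjoin} ensuring that any discarded closed subwalks have nonnegative $F$-weight and with the hypotheses $\distgt{G}{T}{u}{v} = \distgt{G}{T}{v}{w} = 0$ keeping the weights of the new $u$-$v$ and $v$-$w$ legs nonnegative. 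Once edge-disjointness holds, every vertex of $E(P_1) \cup E(P_2) \cup E(Q)$ has even degree, because each of $u, v, w$ is an endpoint of exactly two of the three paths and every other vertex is absent or internal in each path. Hence the union decomposes into circuits of $G$, each of nonnegative $F$-weight by Lemma~\ref{lem:minimumjoin}; summing gives $0 \le w_F(P_1) + w_F(P_2) + w_F(Q) = w_F(Q)$, contradicting $w_F(Q) < 0$.

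The main obstacle lies in the splicing step. An edge shared by two of the three paths admits two possible pivot points, namely its two endpoints, and, depending on which pair of paths shares it, the splice reshuffles different combinations of subwalks. A careful case analysis is needed to show that in every configuration some splicing produces a valid triple of strictly smaller total edge count; the weight hypotheses on the $u$-$v$ and $v$-$w$ legs, which force every such path to have nonnegative $F$-weight, provide exactly the rigidity that makes the reduction close.
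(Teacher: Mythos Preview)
The paper does not prove this theorem; it is quoted from \cite{kita2017parity} without argument. So there is no paper proof to compare against, and the question is whether your sketch stands on its own.

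It does not. The entire weight of transitivity rests on the implication
\[
\distgt{G}{T}{u}{v}=\distgt{G}{T}{v}{w}=0 \ \Longrightarrow\ \distgt{G}{T}{u}{w}\ge 0,
\]
and this implication is \emph{false} without the factor-component hypothesis, which your splicing argument never invokes. Take $G=K_{1,3}$ with centre $a$ and leaves $u,v,w$, and $T=\{u,w\}$. The unique (hence minimum) join is $F=\{ua,aw\}$, giving $w_F(ua)=w_F(aw)=-1$ and $w_F(va)=1$. The only $u$--$v$, $v$--$w$, $u$--$w$ paths are $u\,a\,v$, $v\,a\,w$, $u\,a\,w$, with $F$-weights $0,0,-2$. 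So $\distgt{G}{T}{u}{v}=\distgt{G}{T}{v}{w}=0$ while $\distgt{G}{T}{u}{w}=-2$. Here the minimal triple $(P_1,P_2,Q)$ is unique, is \emph{not} pairwise edge-disjoint, and admits no splice producing a smaller valid triple: any rearrangement of the three length-two legs through $a$ returns the same three paths. Your claimed reduction to edge-disjointness therefore fails already on this example.

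Of course $v$ here lies in a separate factor-component (the only allowed edges are $ua,aw$), so the \emph{theorem} is safe. But your proof of the inequality $\distgt{G}{T}{u}{w}\ge 0$ uses only the two distance hypotheses and Lemma~\ref{lem:minimumjoin}; it never touches the assumption that $u,v,w$ share a factor-component. The example shows this cannot work. Any correct argument must feed factor-connectedness into the $\ge 0$ direction, not just the $\le 0$ direction, and the splicing scheme as you have described it gives no hook for doing so. The ``careful case analysis'' you defer is not a matter of bookkeeping; the underlying claim is wrong at the level of generality at which you have stated it.
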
 

Under Theorem~\ref{thm:tkl}, 
the family of equivalence classes of $\gtsim{G}{T}$ is  called the {\em general Kotzig-Lov\'asz decomposition} 
or the {\em Kotzig-Lov\'asz decomposition} of the graft $(G, T)$, 
and is denoted by $\gtpart{G}{T}$. 
It is obvious from the definition that, for each $H \in \tcomp{G}{T}$, 
the family $\{ S \in\gtpart{G}{T}: S\cap V(H) \neq \emptyset\}$ is a partition of $V(H)$. 
We denote this family by $\pargtpart{G}{T}{H}$. 

Note the next lemma, which can be confirmed from Proposition~\ref{prop:dist2invariant} and Lemma~\ref{lem:elem2nonposi}.

\begin{lemma} \label{lem:tkl2dist} 
Let $(G, T)$ be a graft and  $F$ be a minimum join. 
Let $H\in\tcomp{G}{T}$ and $u,v \in V(H)$. 
Then, $u \gtsim{G}{T} v$ holds if and only if $\distgtf{G}{T}{F}{u}{v} = 0$, 
whereas $u \gtsim{G}{T} v$ does not hold if and only if $\distgtf{G}{T}{F}{u}{v} < 0$. 
\end{lemma}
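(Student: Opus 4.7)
The plan is to reduce the lemma to Proposition~\ref{prop:dist2invariant} combined with the sign bound from Lemma~\ref{lem:elem2nonposi}.

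First I would dispatch the trivial case $u = v$: the single-vertex path has $F$-weight $0$, so $\distgtf{G}{T}{F}{u}{v} = 0$, and $u \gtsim{G}{T} v$ holds by definition. Assuming from now on that $u \neq v$, Proposition~\ref{prop:dist2invariant} gives $\distgtf{G}{T}{F}{u}{v} = \nu(G, T) - \nu(G, T \Delta \{u,v\})$. The hypothesis places $u$ and $v$ in the common factor-component $H$, so the definition of $\gtsim{G}{T}$ reduces to the single condition $\nu(G, T \Delta \{u,v\}) = \nu(G, T)$, which via the displayed identity is exactly $\distgtf{G}{T}{F}{u}{v} = 0$. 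This settles the equivalence $u \gtsim{G}{T} v \iff \distgtf{G}{T}{F}{u}{v} = 0$.

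For the second half of the statement I need to rule out positive distances. I would invoke Lemma~\ref{lem:elem2nonposi} applied to the subgraft $(G, T)[V(H)]$, which is factor-connected by construction, to obtain $\distgt{H}{T \cap V(H)}{u}{v} \le 0$. Because $V(H)$ is separating, $F \cap \parcut{G}{V(H)} = \emptyset$, and hence $F \cap E[V(H)]$ is a minimum join of $(G, T)[V(H)]$ while every path inside $H$ has the same $F$-weight whether viewed in $H$ or in $G$. This transfers the bound to $\distgtf{G}{T}{F}{u}{v} \le 0$, so together with the first equivalence any violation of $u \gtsim{G}{T} v$ forces strict negativity.

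No genuine obstacle is anticipated; the proof is an unwinding of Proposition~\ref{prop:dist2invariant} paired with Lemma~\ref{lem:elem2nonposi}. The one point of mild care is transferring the nonpositivity bound from the factor-component to the ambient graft, which is handled cleanly by the separating-set observation above.
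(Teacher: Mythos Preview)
Your argument is correct and is precisely the route the paper sketches: the first equivalence is an immediate rewriting of the definition of $\gtsim{G}{T}$ via Proposition~\ref{prop:dist2invariant}, and the second follows once Lemma~\ref{lem:elem2nonposi} supplies the bound $\distgtf{G}{T}{F}{u}{v}\le 0$. Your extra care in passing through the factor-connected subgraft $(G,T)[V(H)]$ to justify that bound is appropriate, and the transfer back to the ambient graft via the separating-set observation is exactly the right mechanism.
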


\section{Combs}

\begin{definition} 
Let $(G, T)$ be a bipartite graft with color classes $A$ and $B$. 
If $B\subseteq T$ and $\nu(G, T) = |B|$, then  $(G, T)$ is called a {\em quasicomb}, 
and sets $A$ and $B$ are called the {\em spine} and {\em tooth} sets of $(G, T)$, respectively. 
If we say that $(G, T; A, B)$ is a comb, then $A$ and $B$ are the spine and tooth sets, respectively.     
\end{definition}

\begin{remark} 
Note that the definition of combs can be rephrased as follows. 
Let $(G, T; A, B)$ be a bipartite graft, and let $F$ be a minimum join. 
Then, $(G, T; A, B)$ is a comb if and only if $|\parcut{G}{v} \cap F | = 1$ for every $v\in B$. 
\end{remark}

The next lemma can easily be confirmed from Lemma~\ref{lem:elem2nonposi} and the definition of combs.

\begin{lemma}[Kita~\cite{kita2020bipartite}] \label{lem:elemcomb2nonposi} 
If $(G, T; A, B)$ is a factor-connected comb, then 
\begin{rmenum} 
\item $\distgt{G}{T}{x}{y} = 0$ for every $x, y\in A$,  
\item $\distgt{G}{T}{x}{y} = -1$ for every $x\in A$ and every $y\in B$, and 
\item $\distgt{G}{T}{x}{y} \in \{ 0, -2\}$ for every $x, y \in B$. 
\end{rmenum} 
\end{lemma}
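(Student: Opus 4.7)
The plan is to fix a minimum join $F$ of $(G, T)$ and bound $\distgt{G}{T}{x}{y}$ from above and from below. The upper bound is immediate: since $(G, T)$ is factor-connected, Lemma~\ref{lem:elem2nonposi} gives $\distgt{G}{T}{x}{y} \le 0$ in all three cases. Combined with Proposition~\ref{prop:dist2invariant}, distances are independent of the minimum join chosen, so I may work with this fixed $F$.

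The main work is the lower bound on $w_F(P)$ for an arbitrary $x$--$y$ path $P = v_0 v_1 \cdots v_\ell$, which I would extract from the comb hypothesis $|\parcut{G}{v} \cap F| = 1$ for every $v \in B$, recorded in the remark after the definition of a comb. Since $G$ is bipartite, $P$ alternates between $A$ and $B$. At each internal $B$-vertex $v_i$ of $P$, the path uses exactly two edges $v_{i-1}v_i$ and $v_iv_{i+1}$, and by the comb property at most one of them lies in $F$; hence the pair contributes $F$-weight $+2$ or $0$, in particular $\ge 0$. Any edge of $P$ incident to an endpoint lying in $B$ is unpaired and contributes weight $\ge -1$. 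Thus $w_F(P) \ge 0$ in case (i) (both ends in $A$, every $B$-vertex of $P$ is internal), $w_F(P) \ge -1$ in case (ii) (exactly one end-edge at a $B$-endpoint is unpaired), and $w_F(P) \ge -2$ in case (iii) (both end-edges at $B$-endpoints are unpaired).

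To pin down the exact value, I would then invoke a parity argument: because every edge contributes $\pm 1$, $w_F(P)$ has the same parity as $|E(P)|$, and bipartiteness fixes this parity from the colors of the endpoints. So $\distgt{G}{T}{x}{y}$ is even in cases (i) and (iii) and odd in case (ii). Combining with $\distgt{G}{T}{x}{y} \le 0$ and the lower bounds above: in (i) we get $\distgt{G}{T}{x}{y} = 0$; in (ii) we get $\distgt{G}{T}{x}{y} \in \{0, -1, \ldots\} \cap (\text{odd}) \cap [-1, 0]= \{-1\}$; in (iii) we get $\distgt{G}{T}{x}{y} \in \{0, -2\}$.

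No serious obstacle is anticipated. The one point that requires a little care is the clean handling of the endpoints in the pairing argument, in particular ensuring that when an endpoint lies in $A$ no unpaired end-edge contributes a negative term, and when an endpoint lies in $B$ the single unpaired edge contributes at worst $-1$; this is purely bookkeeping on the alternation pattern of $P$.
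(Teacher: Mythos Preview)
Your proposal is correct and matches the paper's approach: the paper does not give a detailed proof but simply says the lemma ``can easily be confirmed from Lemma~\ref{lem:elem2nonposi} and the definition of combs,'' and your argument is precisely the natural unpacking of this---the upper bound from Lemma~\ref{lem:elem2nonposi}, the lower bound from the comb constraint $|\parcut{G}{v}\cap F|=1$ at each $v\in B$ via the edge-pairing at internal $B$-vertices (which is also the content of Lemma~\ref{lem:qcomb2dist}), and the parity observation from bipartiteness.
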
 

The next lemma can easily be derived from Lemmas~\ref{lem:tkl2dist} and \ref{lem:elemcomb2nonposi}. 
In Section~\ref{sec:relationship}, new results are stated on the premise of  this lemma. 

\begin{lemma}[Kita~\cite{kita2020bipartite}] \label{lem:comb2kl} 
Let $(G, T; A, B)$ be a comb, and let $H\in\tcomp{G}{T}$. 
Then, $V(H)\cap A \in \pargtpart{G}{T}{H}$ holds, 
and the family $\pargtpart{G}{T}{H}\setminus \{ V(H)\cap A\}$ is a partition of $V(H)\cap B$. 
\end{lemma}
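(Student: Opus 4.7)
The plan is to restrict attention to the factor-component $H$, recognize that it inherits the structure of a factor-connected comb, and then transfer the conclusion of Lemma~\ref{lem:elemcomb2nonposi} applied to $H$ back to $(G, T)$ via distances. Once the transfer is done, Lemma~\ref{lem:tkl2dist} will immediately yield the desired partition.

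First I would verify that $(H, T\cap V(H); V(H)\cap A, V(H)\cap B)$ is itself a factor-connected comb. Factor-connectedness is immediate from $H\in\tcomp{G}{T}$. Since factor-components are separating, for any minimum join $F$ of $(G, T)$ the restriction $F\cap E(H)$ is a minimum join of $(H, T\cap V(H))$; bipartiteness together with $|\parcut{G}{v}\cap F|=1$ for every $v\in B$ then gives $|F\cap E(H)|=|V(H)\cap B|$, so $\nu(H, T\cap V(H))=|V(H)\cap B|$ and the comb condition holds.

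Next I would prove the distance-preservation identity
\[
\distgt{G}{T}{u}{v}=\distgt{H}{T\cap V(H)}{u}{v}\qquad\text{for all } u,v\in V(H).
\]
The inequality $\le$ is immediate, because any $u$-$v$ path in $H$ is also a $u$-$v$ path in $G$ with the same $F$-weight. For the reverse, I would rewrite both sides using Proposition~\ref{prop:dist2invariant}, note that $\nu(G, T)=\sum_{H'\in\tcomp{G}{T}}\nu(H', T\cap V(H'))$ (a consequence of each $V(H')$ being separating), and observe that combining any minimum join of $(H, (T\cap V(H))\Delta\{u,v\})$ with minimum joins of the remaining factor-components produces a join of $(G, T\Delta\{u,v\})$ whose size yields exactly the upper bound on $\nu(G, T\Delta\{u,v\})$ needed.

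Finally, Lemma~\ref{lem:elemcomb2nonposi} applied to the factor-connected comb $(H, T\cap V(H); V(H)\cap A, V(H)\cap B)$ gives distance $0$ on $V(H)\cap A$, distance $-1$ between $V(H)\cap A$ and $V(H)\cap B$, and distances in $\{0,-2\}$ on $V(H)\cap B$. By the distance-preservation step these are also the values of $\distgt{G}{T}{\cdot}{\cdot}$, so Lemma~\ref{lem:tkl2dist} shows that the vertices of $V(H)\cap A$ form a single $\gtsim{G}{T}$-class and no vertex of $V(H)\cap A$ is $\gtsim{G}{T}$-equivalent to any vertex of $V(H)\cap B$. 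Since $\pargtpart{G}{T}{H}$ partitions $V(H)$, this forces $V(H)\cap A\in\pargtpart{G}{T}{H}$ and the remaining classes to partition $V(H)\cap B$. The main obstacle is the distance-preservation identity; once it is in hand, the rest is routine bookkeeping.
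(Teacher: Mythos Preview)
Your proof is correct and follows the route the paper indicates: the paper merely remarks that the lemma ``can easily be derived from Lemmas~\ref{lem:tkl2dist} and \ref{lem:elemcomb2nonposi},'' and your argument---restrict to the factor-component $H$, recognize it as a factor-connected comb, transfer the distance values back to $(G,T)$, then invoke Lemma~\ref{lem:tkl2dist}---is exactly the natural way to flesh this out. The full distance-preservation identity is slightly more than is needed (the lower bounds on $\distgt{G}{T}{u}{v}$ already hold in the ambient comb by Lemma~\ref{lem:qcomb2dist}, so only the easy $\le$ direction is required), but your argument for it via Proposition~\ref{prop:dist2invariant} and the additivity of $\nu$ over factor-components is sound.
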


\section{Quasicombs}

\begin{definition} 
Let $(G, T)$ be a bipartite graft with color classes $A$ and $B$. 
If $\nu(G, T) = |B\cap T|$, then the graft $(G, T)$ is called a {\em quasicomb}, 
and sets $A$ and $B$ are called the {\em spine} and {\em tooth} sets of $(G, T)$, respectively. 
If we say that $(G, T; A, B)$ is a quasicomb, then $A$ and $B$ are the spine and tooth sets, respectively.     
\end{definition}

\begin{remark} 
If $(G, T; A, B)$ is a comb, then it is a quasicomb. 
\end{remark}

\begin{remark} 
The definition of quasicombs can be rephrased as follows. 
Let $(G, T; A, B)$ be a bipartite graft, and let $F$ be a minimum join. 
Then, $(G, T; A, B)$ is a quasicomb if and only if $|\parcut{G}{v} \cap F | \le 1$ for every $v\in B$. 
\end{remark}

In the following, we present four fundamental lemmas on quasicombs that can easily be confirmed.  
We use these lemmas everywhere in this paper sometimes without explicitly mentioning them. 
Note that these lemmas can be applied to combs because combs are quasicombs.

\begin{lemma} \label{lem:qcomb2dist} 
Let $(G, T; A, B)$ be a quasicomb, and let $F$ be a minimum join of $(G, T; A, B)$. 
Then, the following properties hold. 
\begin{rmenum} 
\item $\distgtf{G}{T}{F}{x}{y} \ge 0$ for every $x, y \in A$. 
\item $\distgtf{G}{T}{F}{x}{y} \ge -1$ for every $x\in A$ and every $y\in B$. 
\item $\distgtf{G}{T}{F}{x}{y} \ge -2$ for every $x, y\in B$.  
\end{rmenum} 
\end{lemma}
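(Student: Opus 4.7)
The plan is to give a per-path lower bound on $w_F(P)$ for any $x$-$y$ path $P$ in $G$, using bipartiteness to reorganize the sum of edge weights and then applying the defining inequality of a quasicomb at each $B$-vertex. Since the claimed bounds depend only on which of $x,y$ lie in $B$, the three cases will follow from the same calculation specialized to which endpoints are in $B$.

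More concretely, let $P$ be any path between $x$ and $y$. Because $G$ is bipartite, every edge of $P$ is incident to exactly one vertex of $B$, so
\[
w_F(P) \;=\; \sum_{e\in E(P)} w_F(e) \;=\; \sum_{v \in V(P)\cap B}\ \sum_{\substack{e\in E(P)\\ v\in e}} w_F(e),
\]
where the inner sum has two terms if $v$ is an internal vertex of $P$ and one term if $v$ is an endpoint of $P$. The quasicomb hypothesis (in the form of the remark after the definition of quasicomb) gives $|\delta_G(v)\cap F|\le 1$ for every $v\in B$, hence at most one edge of $P$ incident to $v$ can have $w_F$-value $-1$. Therefore the inner sum is $\ge 0$ when $v$ is internal and $\ge -1$ when $v$ is an endpoint of $P$.

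Now the three cases of the statement are separated only by how many endpoints of $P$ lie in $B$: none for (i), one for (ii), and two for (iii). The displayed identity above then yields $w_F(P)\ge 0$, $w_F(P)\ge -1$, and $w_F(P)\ge -2$, respectively. Since this holds for every path $P$ between $x$ and $y$, it holds in particular for an $F$-shortest such path, which by definition realizes $\distgtf{G}{T}{F}{x}{y}$, giving the three inequalities.

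There is no real obstacle here; the only thing to be careful about is the bookkeeping when $x$ or $y$ itself lies in $B$, so that the corresponding vertex appears once in the outer sum with a single-edge inner sum rather than a two-edge one. Everything else is a direct consequence of bipartiteness combined with the one-line quasicomb condition $|\delta_G(v)\cap F|\le 1$ for $v\in B$.
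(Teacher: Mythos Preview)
Your proof is correct and is exactly the sort of routine verification the paper has in mind: the paper states this lemma among ``four fundamental lemmas on quasicombs that can easily be confirmed'' and gives no proof of its own, so your edge-regrouping argument via bipartiteness together with the quasicomb condition $|\delta_G(v)\cap F|\le 1$ for $v\in B$ is precisely the intended one-line check.
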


\begin{definition}  
Let $(G, T; A, B)$ be a graft, and let $F \subseteq E(G)$.  
We say that a path $P$ between $s$ and $t$ is {\em $F$-balanced} 
if $| \parcut{P}{v} \cap F | = 1$ holds for every $v\in ( V(P)\cap B ) \setminus \{s, t\}$.   
A circuit $C$ is {\em $F$-balanced } 
if $| \parcut{P}{v} \cap F | = 1$ holds for every $v\in V(C)\cap B$. 
An ear $P$ relative to a set of vertices is {\em F-balanced} 
if $|\parcut{P}{v} \cap F| = 1$  for every $v\in \internal{P} \cap B $.  
\end{definition}

\begin{lemma} \label{lem:balancedcircuit} 
Let $(G, T; A, B)$ be a quasicomb, and let $F$ be a minimum join of $(G, T; A, B)$.  
A circuit $C$ is $F$-balanced if and only if $w_F(C) = 0$. 
\end{lemma}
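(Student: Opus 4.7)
The plan is a straightforward double-counting argument centered on the $F$-edges of $C$. Since $C$ is a circuit in the bipartite graph $G$ with color classes $A$ and $B$, I would first observe that $C$ has even length $2k$, with exactly $k$ vertices in each color class, and every edge of $C$ has a unique endpoint in $B$. Summing $|\parcut{C}{v}\cap F|$ over $v \in V(C) \cap B$ therefore counts each edge of $F \cap E(C)$ exactly once via its unique $B$-endpoint, yielding the identity
\[
|F\cap E(C)| \;=\; \sum_{v \in V(C)\cap B} |\parcut{C}{v}\cap F|.
\]
The definition of $w_F$ immediately gives $w_F(C) = 2k - 2|F\cap E(C)|$, so the condition $w_F(C) = 0$ is equivalent to $|F\cap E(C)| = k$.

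Next I would use the quasicomb hypothesis in the minimum-join reformulation stated in the remark, which tells us that $|\parcut{G}{v}\cap F| \le 1$ for every $v \in B$, and \emph{a fortiori} $|\parcut{C}{v}\cap F| \le 1$ for every $v \in V(C) \cap B$, since $\parcut{C}{v} \subseteq \parcut{G}{v}$. The sum on the right-hand side of the identity above therefore consists of $k = |V(C)\cap B|$ terms, each belonging to $\{0, 1\}$. Such a sum equals $k$ precisely when every term equals $1$, which is exactly the definition of $C$ being $F$-balanced. This yields both directions at once: $w_F(C) = 0$ iff $|F\cap E(C)| = k$ iff every summand equals $1$ iff $C$ is $F$-balanced.

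There is no real obstacle here; the proof reduces to identifying the correct double-counting of $F \cap E(C)$ through the $B$-endpoints and then applying the quasicomb upper bound on $|\parcut{G}{v}\cap F|$. The mild subtlety worth flagging in the write-up is that the quasicomb condition is a global one on $G$, yet it transfers to $C$ for free because $\parcut{C}{v} \subseteq \parcut{G}{v}$; once this is noted, both implications collapse into a single pigeonhole observation on a sum of $0$-$1$ quantities.
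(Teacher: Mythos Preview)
Your argument is correct: the double-counting identity $|F\cap E(C)| = \sum_{v\in V(C)\cap B}|\parcut{C}{v}\cap F|$ together with $w_F(C) = 2k - 2|F\cap E(C)|$ reduces the equivalence to the observation that a sum of $k$ terms in $\{0,1\}$ equals $k$ exactly when every term is $1$, and the quasicomb bound $|\parcut{G}{v}\cap F|\le 1$ (hence $|\parcut{C}{v}\cap F|\le 1$) is precisely what forces the terms into $\{0,1\}$. The paper does not spell out a proof of this lemma, merely listing it among facts that ``can easily be confirmed''; your write-up is exactly the kind of elementary confirmation the paper has in mind, so there is nothing to contrast.
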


\begin{lemma} \label{lem:qcomb2path} 
Let $(G, T; A, B)$ be a quasicomb, and let $F$ be a minimum join of $(G, T; A, B)$.  
Let $x, y\in V(G)$, and let $P$ be a path between $x$ and $y$.   
\begin{rmenum} 
\item Let $x, y\in A$. 
Then, $P$ is $F$-balanced if and only if $w_F(P ) = 0$. 
\item Let $x\in A$ and $y\in B$.  
\begin{enumerate} 
\item 
If $P$ is $F$-balanced then $w_F(P) \in \{ 1, -1 \}$ holds. 
\item  $w_F(P ) = -1$ holds if and only if $P$ is $F$-balanced and the edge of $P$ connected to $y$ is in $F$. 
\item Assume that $P$ is $F$-balanced. 
Then, $w_F(P) = 1$ holds if and only if the edge of $P$ connected to $y$ is not in $F$. 
\end{enumerate} 
\item Let $x, y\in B$. 
\begin{enumerate} 
\item If $P$ is $F$-balanced then $w_F(P)  \in \{ -2, 0, 2\}$ holds. 
\item $w_F(P) = -2$ holds if and only if $P$ is $F$-balanced and the edges of $P$ connected to the ends are in $F$. 
\item Assume that $P$ is $F$-balanced. 
Then, $w_F(P) = 0$ holds if and only if, of the two edges of $P$ connected to the ends,  one is in $F$ whereas the other is not in $F$. 
Additionally, $w_F(P)  =2$ holds if and only if the edges of $P$ connected to the ends are not in $F$. 
\end{enumerate} 
\end{rmenum} 
\end{lemma}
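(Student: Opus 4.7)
\medskip
\noindent\textbf{Proof plan.} The plan is to reduce every item to a single bookkeeping identity exploiting that $G$ is bipartite and that the quasicomb condition forces $|\parcut{G}{v}\cap F|\le 1$ for each $v\in B$. Since each edge of $P$ has exactly one endpoint in $B$, I would first record the key counting identity
\[
|E(P)\cap F| \;=\; \sum_{v\in V(P)\cap B} |\parcut{P}{v}\cap F|,
\]
together with $w_F(P)=|E(P)|-2|E(P)\cap F|$. For each $v\in V(P)\cap B$ the quasicomb hypothesis gives $|\parcut{P}{v}\cap F|\le |\parcut{G}{v}\cap F|\le 1$, with equality to $0$ or $1$; for an internal $B$-vertex of $P$, being $F$-balanced means hitting exactly $1$, while for an endpoint in $B$ the value is only constrained to lie in $\{0,1\}$ and depends on whether the terminal edge of $P$ at that endpoint lies in $F$. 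This is the whole mechanism; the rest is casework driven by parity of $|E(P)|$ and the number of $B$-vertices on $P$.

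\medskip\noindent
For (i), $x,y\in A$ forces $|E(P)|$ even and every $B$-vertex of $P$ internal, so the identity gives $|E(P)\cap F|\le |V(P)\cap B|=|E(P)|/2$, with equality exactly when $P$ is $F$-balanced; substituting into $w_F(P)=|E(P)|-2|E(P)\cap F|$ yields $w_F(P)\ge 0$ with equality iff balanced. For (ii), $|E(P)|$ is odd, $|V(P)\cap B|=(|E(P)|+1)/2$, the endpoint $y\in B$ contributes $0$ or $1$ depending on whether the edge of $P$ at $y$ is in $F$, and every other $B$-vertex is internal; if $P$ is balanced then $|E(P)\cap F|\in\{(|E(P)|\pm1)/2\}$, giving $w_F(P)\in\{1,-1\}$, and $w_F(P)=-1$ pins down the maximum value of the sum, forcing both balancedness and the terminal edge at $y$ in $F$. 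The converses in (b),(c) are immediate from the same substitution. For (iii), $x,y\in B$ gives $|E(P)|$ even and two $B$-endpoints each contributing $0$ or $1$; the same substitution yields $w_F(P)\in\{-2,0,2\}$ in the balanced case, with the three values distinguished by how many of the two terminal edges belong to $F$.

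\medskip\noindent
I would carry this out in the order (i), (ii), (iii), since (ii) and (iii) are essentially the same computation with one and two $B$-endpoints respectively. There is no genuine obstacle: the only place one has to be careful is the converse directions, where one must argue that achieving the extreme value of $w_F(P)$ forces every $B$-vertex of $P$ to attain the upper bound $1$ given by the quasicomb condition, hence forces balancedness; this is exactly where the inequality $|\parcut{G}{v}\cap F|\le 1$ (Lemma \ref{lem:qcomb2dist}'s underlying hypothesis) is used in a non-trivial way. Everything else is arithmetic on path lengths and the definition of $w_F$.
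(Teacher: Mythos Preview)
Your argument is correct and is precisely the elementary bookkeeping the paper has in mind when it says these lemmas ``can easily be confirmed'' without supplying a proof. The identity $|E(P)\cap F|=\sum_{v\in V(P)\cap B}|\parcut{P}{v}\cap F|$ together with the quasicomb bound $|\parcut{G}{v}\cap F|\le 1$ for $v\in B$ is exactly the mechanism, and your casework on the number of $B$-endpoints cleanly recovers every item.
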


\begin{lemma} \label{lem:nonbalanced}  
Let $(G, T; A, B)$ be a quasicomb, and let $F$ be a minimum join. 
Let $x, y\in B$, and let $P$ be a path between $x$ and $y$ with $w_F(P) = 0$. 
Then, there uniquely exists a vertex $z \in V(P)\cap B$ with $\parcut{P}{z} \cap F = \emptyset$, 
and  $xPz$ and $yPz$ are $F$-balanced paths of weight zero. 
If $P$ is $F$-balanced, then either $z = x$ or $z = y$ holds. 
If $P$ is not $F$-balanced,  
then, in $xPz$ and $yPz$, the edges connected to the end $x$ or $y$ are from $F$, 
whereas the edges connected to $z$ are from $E(G)\setminus F$.

\end{lemma}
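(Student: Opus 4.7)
The plan is to hinge on a simple counting argument that exploits the quasicomb hypothesis $|\parcut{G}{v}\cap F|\le 1$ for every $v\in B$. First I would observe that since $G$ is bipartite with $x,y\in B$, the path $P$ alternates colors with $|V(P)\cap B|=|E(P)|/2+1$, and the assumption $w_F(P)=0$ translates to $|E(P)\cap F|=|V(P)\cap B|-1$. Because each edge of $E(P)\cap F$ has a unique $B$-endpoint in $V(P)$ and each $B$-vertex of $V(P)$ contributes at most one such edge, exactly one vertex $z\in V(P)\cap B$ satisfies $\parcut{P}{z}\cap F=\emptyset$, while every other $v\in V(P)\cap B$ has $|\parcut{P}{v}\cap F|=1$. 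This settles existence and uniqueness of $z$.

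Next, the balancedness of $xPz$ and $yPz$ would follow directly: every internal $B$-vertex of either subpath differs from $z$ and so inherits $|\parcut{P}{v}\cap F|=1$, which equals the incidence restricted to the subpath since both edges of $P$ at such $v$ lie in the subpath. The hard part will be showing that both subpath weights vanish. By Lemma~\ref{lem:qcomb2path}(iii)(a) each weight lies in $\{-2,0,2\}$ and their sum is $w_F(P)=0$, so I would need to rule out the $(-2,+2)$ case. For this I would use that the edge of $xPz$ incident with $z$ belongs to $\parcut{P}{z}$, which is disjoint from $F$ by the defining property of $z$; Lemma~\ref{lem:qcomb2path}(iii)(b) then forbids $w_F(xPz)=-2$, and symmetrically for $yPz$, forcing both weights to be zero.

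Finally, the dichotomy on balancedness is a direct corollary of the uniqueness of $z$ together with Lemma~\ref{lem:qcomb2path}(iii)(c). If $P$ is $F$-balanced, every internal $B$-vertex of $P$ has $F$-incidence one, so $z$ cannot be internal and must equal $x$ or $y$. If $P$ is not $F$-balanced, some internal $B$-vertex has $F$-incidence zero, and by uniqueness it must be $z$, so $z\notin\{x,y\}$; applying Lemma~\ref{lem:qcomb2path}(iii)(c) to the $F$-balanced weight-zero path $xPz$, whose $z$-end edge lies outside $F$, then pins the $x$-end edge inside $F$, with the symmetric conclusion for $yPz$. The one genuinely non-mechanical step is the rejection of the $(-2,+2)$ case; the remaining assertions fall out rigidly from the counting.
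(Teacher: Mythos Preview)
Your argument is correct. The paper does not give an explicit proof of this lemma; it groups Lemma~\ref{lem:nonbalanced} with the other basic quasicomb lemmas and simply states that they ``can easily be confirmed,'' so your counting argument is exactly the kind of routine verification the paper has in mind, and every step checks out against Lemmas~\ref{lem:qcomb2path}(iii)(a)--(c).
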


Under Lemma~\ref{lem:nonbalanced}, we define the concept of midvertices. 
\begin{definition} 
Let $(G, T; A, B)$ be a quasicomb, and let $F$ be a minimum join. 
Let $x, y\in B$, and let $P$ be a path between $x$ and $y$ whose $F$-weight is zero. 
We call the vertex $z\in V(P)\cap B$ with $\parcut{P}{z}\cap F = \emptyset$ 
the {\em midvertex} of $P$.

\end{definition}

\section{Critical Quasicombs} 

\begin{definition}
Let $(G, T; A, B)$ be a quasicomb. 
Let $r\in B$. 
We say that $(G, T; A, B)$ is {\em critical } with {\em root } $r$ if 
$\nu(G,T) - \nu(G, T\Delta \{x, r\}) = 1$ for each $x \in A$ and $\nu(G,T) - \nu(G, T\Delta \{x, r\}) = 0$ for each $x\in B$. 
\end{definition} 

We use the following two lemmas everywhere in this paper sometimes without explicitly mentioning them. 
Under Proposition~\ref{prop:dist2invariant}, 
the definition of critical quasicombs can be rephrased as follows. 

\begin{lemma} \label{lem:critical2dist} 
Let $(G, T; A, B)$ be a quasicomb, and let $r\in B$. 
Let $F$ be a minimum join. 
Then, the following properties are equivalent. 
\begin{rmenum} 
\item $(G, T; A, B)$ is a critical quasicomb with root $r$. 
\item $\distgtf{G}{T}{F}{x}{r} = 1$ for each $x \in A$, and $\distgtf{G}{T}{F}{x}{r} = 0$ for each $x\in B$. 
\end{rmenum} 
\end{lemma}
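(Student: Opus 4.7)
The plan is essentially a direct translation of the definition through Proposition~\ref{prop:dist2invariant}. That proposition asserts that for any minimum join $F$ of a graft $(G,T)$ and any two vertices $u,v\in V(G)$,
\[
\distgtf{G}{T}{F}{u}{v} = \nu(G, T) - \nu(G, T\Delta \{u, v\}).
\]
I would specialize this identity to $v = r$ and let $u = x$ range over $V(G)$. Then the right-hand side becomes exactly the quantity $\nu(G, T) - \nu(G, T\Delta \{x, r\})$ that appears in the definition of a critical quasicomb.

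From here, the two directions of the equivalence are immediate. For (i)$\Rightarrow$(ii): if $(G, T; A, B)$ is critical with root $r$, then by definition $\nu(G, T) - \nu(G, T\Delta \{x, r\}) = 1$ for each $x\in A$ and $= 0$ for each $x\in B$; substituting into the displayed identity yields $\distgtf{G}{T}{F}{x}{r} = 1$ for each $x\in A$ and $\distgtf{G}{T}{F}{x}{r} = 0$ for each $x\in B$, which is (ii). The converse (ii)$\Rightarrow$(i) is obtained by reading the same identity in the opposite direction.

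I do not expect any genuine obstacle: the lemma is presented in the text as a rephrasing of the definition via Proposition~\ref{prop:dist2invariant}, and no quasicomb-specific machinery (such as Lemma~\ref{lem:qcomb2dist} or the bipartite structure) is actually required for the equivalence itself. The only point worth noting is that the statement fixes an arbitrary minimum join $F$, and Proposition~\ref{prop:dist2invariant} guarantees that the distance $\distgtf{G}{T}{F}{x}{r}$ is independent of this choice, so the rephrasing (ii) is well-posed. Consequently, the proof reduces to a single application of the displayed identity.
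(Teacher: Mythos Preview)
Your proposal is correct and is exactly the approach the paper intends: the paper introduces the lemma with the sentence ``Under Proposition~\ref{prop:dist2invariant}, the definition of critical quasicombs can be rephrased as follows'' and gives no separate proof, so the argument is precisely the direct substitution of Proposition~\ref{prop:dist2invariant} into the definition that you describe.
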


Lemmas~\ref{lem:qcomb2path} and \ref{lem:critical2dist} easily imply the next lemma.

\begin{lemma}  \label{lem:critical} 
Let $(G, T; A, B)$ be a critical quasicomb whose root is $r\in B$. 
Let $F$ be a minimum join of $(G, T)$. 
Then, the following properties hold. 
\begin{rmenum} 
\item $\parcut{G}{r} \cap F = \emptyset$. 
\item $|\parcut{G}{v} \cap F| = 1$ for each $v\in B\setminus \{r\}$. 
\item Every $F$-shortest path between $v\in B \setminus \{r\}$ and $r$
is an $F$-balanced path of weight $0$ in which $v$ is connected to an edge from $F$.  
\item Every $F$-shortest path between $v\in A$ and $r$ is an $F$-balanced path of weight $1$.  
\end{rmenum} 
\end{lemma}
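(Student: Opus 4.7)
My plan is to prove the four parts in the order (i), (iii), (ii), (iv), because (i) is needed in the proofs of (iii) and (iv), while (ii) falls out as an immediate corollary of (iii). The main obstacle in both (iii) and (iv) is the upgrade from the weight information, which is all that criticality plus $F$-shortness gives directly, to the structural property of $F$-balancedness; for (iii) the contrapositive of Lemma~\ref{lem:nonbalanced} handles this, and for (iv) I plan to peel off the last edge in order to reduce to an $A$-to-$A$ subpath, where Lemma~\ref{lem:qcomb2path}(i) gives the clean equivalence.

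For (i), I argue by contradiction. If $\parcut{G}{r}\cap F\neq\emptyset$, then the quasicomb property forces $|\parcut{G}{r}\cap F| = 1$, so the single edge in $F$ at $r$ has the form $ar$ with $a\in A$. The one-edge path on $ar$ has $F$-weight $-1$, so $\distgt{G}{T}{a}{r} \le -1$, contradicting Lemma~\ref{lem:critical2dist}, which requires $\distgt{G}{T}{a}{r} = 1$.

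For (iii), let $P$ be an $F$-shortest path between $v \in B\setminus\{r\}$ and $r$; by Lemma~\ref{lem:critical2dist} we have $w_F(P) = 0$. The key step is showing $P$ is $F$-balanced, which I carry out by contradiction using Lemma~\ref{lem:nonbalanced}: if $P$ were not $F$-balanced, that lemma would force the edges of $P$ at both $v$ and $r$ to lie in $F$, contradicting (i). Once $P$ is known to be $F$-balanced, Lemma~\ref{lem:qcomb2path}(iii)(c) converts $w_F(P) = 0$ into the statement that exactly one end-edge of $P$ lies in $F$; by (i) this end-edge must be at $v$, which finishes (iii). Part (ii) is then immediate: the edge we just exhibited at $v$ shows $|\parcut{G}{v}\cap F|\ge 1$, while the quasicomb property supplies the matching upper bound.

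For (iv), let $P$ be an $F$-shortest path between $v\in A$ and $r$; by Lemma~\ref{lem:critical2dist} we have $w_F(P)=1$. Let $e = ur$ be the edge of $P$ incident to $r$, where $u\in A$ is the neighbor of $r$ on $P$; by (i) we have $e\notin F$, so $w_F(e) = 1$, and hence the subpath $vPu$ has $F$-weight $0$. Since $vPu$ is a path between the two $A$-vertices $v$ and $u$, Lemma~\ref{lem:qcomb2path}(i) implies $vPu$ is $F$-balanced. The internal $B$-vertices of $P$ and $vPu$ coincide with identical incident edges, so $P$ itself is $F$-balanced.
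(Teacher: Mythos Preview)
Your proof is correct and essentially matches the paper's approach: the paper does not give a detailed argument but simply asserts that Lemmas~\ref{lem:qcomb2path} and~\ref{lem:critical2dist} easily imply the result, and your write-up fills in precisely those details. The only minor deviation is your appeal to Lemma~\ref{lem:nonbalanced} in part~(iii); the same conclusion could be reached uniformly with the peeling trick you use in~(iv) (remove the edge at $r$, reduce to an $A$--$B$ path of weight $-1$, and apply Lemma~\ref{lem:qcomb2path}(ii)(b)), which would keep the argument strictly within the two lemmas the paper cites, but this is a cosmetic difference rather than a genuinely different route.
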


\section{Ear Grafts}

\begin{definition} 
Let $(G, T; A, B)$ and $(P, T'; A', B')$ be bipartite grafts with an ordered bipartition. 
We say that $(P, T'; A', B')$ is an {\em ear graft}  relative to $(G, T; A, B)$ if the following are satisfied. 
\begin{rmenum} 
\item $A \cap B' = \emptyset$ and $A' \cap B = \emptyset$. 
\item $P$ is an ear relative to $G$; let $s$ and $t$ be its bonds. %If $P$ is a circuit, then $|E(P)|$ must be even. 
\item $( V(P)\setminus \{s, t\}) \cap B \subseteq T$ holds, and 
\item $\{s, t\} \cap B \cap T = \emptyset$. 
\end{rmenum} 
If $P$ is a round or straight ear,  then $(P, T'; A', B')$ is said to be a {\em round} or {\em straight} ear graft, respectively. 
\end{definition} 

Note that if 
 $(P, T'; A', B')$ is an ear graft relative to $(G, T; A, B)$ with bonds $s$ and $t$ then 
 $(P, T'; A', B')$ is a quasicomb with $B'\cap T' = B' \setminus \{s, t\}$.

\begin{definition} 
Let $(G, T; A, B)$  be a graft, and let $(P, T'; A', B')$ be an ear graft relative to $(G, T; A, B)$. 
We say that $(P, T'; A', B')$ is {\em effective} if 
it is round or if  it is straight and satisfies the following \ref{item:freeend} and \ref{item:bond}: 
\begin{rmenum} 
\item \label{item:freeend} If $s\in A$ holds, then $s\not\in T$ holds; and 
\item \label{item:bond} if $t\in A$ holds, then $t \in T$ holds, 
\end{rmenum} 
where $s$ and $t$ are the ends of $P$ between which $t$ is the bond. 
\end{definition}

The next lemma can easily be confirmed. 

\begin{lemma} \label{lem:lear2matching} 
Let $(G, T; A, B)$  be a graft, and 
let $(P, T'; A', B')$ be an ear graft relative to $(G, T; A, B)$  that is effective and straight. 
Let $s$ and $t$ be the ends of $P$ between which $t$ is the bond. 
Then,  $V(P)\setminus \{s, t\} \subseteq T$ holds. 
Additionally, the only mininmum join of $(P, T'; A, B)$ is the perfect matching of $P - \{s\}\cap A - \{t\}\cap B$.  
\end{lemma}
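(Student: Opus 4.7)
The plan is a case analysis on the colors of the two endpoints $s$ and $t$ of the straight ear $P$, exploiting the alternating color structure of a bipartite path. I would write $P = u_0 u_1 \cdots u_n$ with $s = u_0$ (the free end) and $t = u_n$ (the bond), noting that the parity of $n$ is determined by whether $s$ and $t$ share the same color class.

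For the first assertion, I would combine three inputs: (i) the remark following the ear graft definition, which asserts that $(P, T'; A', B')$ is a quasicomb with $B' \cap T' = B' \setminus \{s, t\}$ and so places every internal $B'$ vertex into $T'$; (ii) the effectivity conditions, which pin down the $T'$-membership of each endpoint (namely $s \notin T'$, and $t \in T'$ iff $t \in A$); and (iii) the graft-axiom parity that $|V(P) \cap T'|$ is even, since $P$ is connected. A short case analysis on the colors of $s$ and $t$, together with the number of internal $A'$ vertices dictated by the color pattern of $P$, forces every internal $A'$ vertex into $T'$, yielding $V(P) \setminus \{s, t\} \subseteq T'$.

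For the second assertion, in each of the four color cases the subpath $Q := P - (\{s\} \cap A) - (\{t\} \cap B)$ has an even number of vertices, so a unique perfect matching $M$ of $Q$ exists. I would verify directly that the degree of each vertex of $P$ in $M$ has the parity matching its $T'$-membership established above, so $M$ is a join of $(P, T'; A', B')$. Since $|M| = |B' \cap T'| = \nu(P, T')$ by the quasicomb property, $M$ is a minimum join. Uniqueness follows because the symmetric difference of two joins has every vertex of even degree, but the tree $P$ contains no nonempty such subgraph. The main obstacle is the first assertion: the parity argument that forces the internal $A'$ vertices into $T'$ requires carefully interlocking the graft axiom, the effectivity conditions on $s$ and $t$, and the length parity of $P$ across the four color cases; once that is in hand, the second assertion is a mechanical verification.
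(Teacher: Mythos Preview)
The paper offers no proof here beyond ``can easily be confirmed,'' so there is nothing to compare against directly. However, your parity step for the first assertion does not go through.

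Take the straight ear $P = u_0 u_1 u_2 u_3 u_4$ with free end $s = u_0 \in B'$ and bond $t = u_4 \in B'$. From the remark you invoke, $u_2 \in T'$ while $u_0, u_4 \notin T'$; both effectivity clauses are vacuous since neither end lies in $A'$. Even parity of $|T'|$ then forces \emph{exactly one} of the two internal $A'$-vertices $u_1, u_3$ into $T'$, not both. Concretely, $T' = \{u_1, u_2\}$ satisfies every constraint you list and gives a genuine effective straight ear graft (it is a quasicomb, with unique minimum join $\{u_1 u_2\}$) for which $u_3 \notin T'$; the first assertion fails, and the perfect matching of $P - t$ is not even a join of $(P, T')$, so the second assertion fails too.

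The underlying issue is that graft parity only determines the \emph{parity} of the number of internal $A'$-vertices lying in $T'$; once there are two or more such vertices it cannot force all of them in. The lemma is true only if the inclusion $V(P)\setminus\{s,t\}\subseteq T'$ is already part of the \emph{definition} of an ear graft --- i.e.\ if clause~(iii) is read without the ``$\cap\,B$'' restriction. Given the systematic unprimed/primed slips throughout this section, that is almost certainly the intended reading; under it the first assertion is immediate from the definition (no case analysis needed), and your verification of the second assertion then goes through as written.
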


Then next lemma is easily confirmed from the definition of effective ear grafts and Lemma~\ref{lem:lear2matching}.

\begin{lemma} \label{lem:ear2path} 
Let $(G, T; A, B)$  be a graft, and 
let $(P, T'; A', B')$ be an effective ear graft relative to $(G, T; A, B)$ with bonds $s$ and $t$. 
Let  $F$ be a minimum join of $(P, T'; A', B')$. 
Then, the following properties hold. 
\begin{rmenum} 
\item For each $x\in V(P)\cap A'$, there exists $r \in \{s, t\}$ such that 
either $r\in A'$ holds and $xPr$ is an $F$-balanced path of weight $0$ 
or $r\in B'$ holds and $xPr$ is an $F$-balanced path of weight $1$. 
\item For each $x\in V(P)\cap B'$,   
 there exists $r \in \{s, t\}$ such that 
either $r\in A'$ holds and $xPr$ is an $F$-balanced path of weight $-1$ 
or $r\in B'$ holds and $xPr$ is an $F$-balanced path of weight $0$ in which $x$ is connected to an edge from $F$.  
\end{rmenum} 
\end{lemma}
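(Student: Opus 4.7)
The plan is to verify the two items by case analysis on whether $(P, T'; A', B')$ is a straight or a round ear graft, in both cases reducing to Lemma~\ref{lem:qcomb2path} applied to $F$-balanced subpaths.

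For the straight case, let $t$ be the bond. By Lemma~\ref{lem:lear2matching}, $V(P)\setminus\{s,t\} \subseteq T$ and the unique minimum join $F$ is the perfect matching of $P - \{s\}\cap A - \{t\}\cap B$. I would set $r := t$ throughout. For each $x \in V(P)$ the internal $B'$ vertices of $xPt$ are internal to $P$, hence in $T'$, so $xPt$ is $F$-balanced. The effective condition together with the position of $t$ in the matched subpath forces the unique edge of $P$ at $t$ to lie in $F$ if and only if $t \in A'$: if $t \in A'$ then $t \in T$ so $t$ stays as an endpoint of the matched subpath and its edge is matched, while if $t \in B'$ then $t$ is deleted and its edge is unmatched. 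Plugging this into Lemma~\ref{lem:qcomb2path} then pins $w_F(xPt)$ to the value required in each combination of colors of $x$ and $t$.

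For the round case, both $s$ and $t$ (with $s = t$ when $P$ is a circuit) serve as candidates for $r$. For each $x \in V(P)$ both $xPs$ and $xPt$ (the two arcs of the circuit when $s=t$) are $F$-balanced by the same argument. The identity
\[
w_F(xPs) + w_F(xPt) = w_F(P)
\]
(viewing $P$ as a circuit when $s = t$) reduces the task to computing $w_F(P)$ and checking that one of the summands takes the prescribed value. To pin down $w_F(P)$ I use that any bond in $B'$ is outside $T'$, and that minimality of $F$ together with the quasicomb identity $|F| = |B' \cap T'|$ forces $|\delta_P(r) \cap F| = 0$ for such a bond $r$ (the only other option, $|\delta_P(r)\cap F| = 2$, would make $|F|$ exceed $\nu(P,T')$). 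This determines $w_F(P)$ exactly in each combination of bond colors ($0$ if $s, t \in A'$; $1$ if exactly one of $s, t$ lies in $B'$ and $P$ is a path; $2$ otherwise). Combined with the range bounds on each subpath weight from Lemma~\ref{lem:qcomb2path}, one of $xPs, xPt$ must attain the required weight; the ``$x$ connected to an edge from $F$'' clause, when applicable, follows from Lemma~\ref{lem:qcomb2path}(iii)(c) together with the fact that the edge of the chosen subpath at its end $r \in B'$ is not in $F$.

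The main obstacle is the bookkeeping of subcases on the colors of $x, s, t$, which is tedious but routine. The one substantive input beyond Lemma~\ref{lem:qcomb2path} is the forced value $|\delta_P(r)\cap F| = 0$ for each bond $r \in B'$, derived from the minimality of $F$ against the quasicomb bound; once this is in hand, the sum identity $w_F(xPs)+w_F(xPt) = w_F(P)$ and the range constraints make the existence of a good $r$ immediate in every subcase.
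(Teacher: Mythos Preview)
Your proposal is correct and is precisely the kind of routine verification the paper has in mind: the paper itself gives no argument beyond the sentence ``easily confirmed from the definition of effective ear grafts and Lemma~\ref{lem:lear2matching}'', and your case split (straight versus round, then by colours of the bonds and of $x$) together with the counting argument forcing $|\parcut{P}{r}\cap F|=0$ for any bond $r\in B'$ is exactly the expected way to unpack that sentence. One small remark: in the straight case your phrase ``hence in $T'$, so $xPt$ is $F$-balanced'' slightly obscures the real reason---what matters is that every internal $B'$-vertex of $xPt$ lies in the matched subpath $P-\{s\}\cap A-\{t\}\cap B$ and therefore has $F$-degree exactly~$1$ in $P$ (and hence in $xPt$); membership in $T'$ alone would only give odd degree.
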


\section{Constructive Characterization for Critical Quasicombs}  

From this section onward, we present our main results. 
In this section, we provide a constructive characterization of critical quasicombs.  
The main results in this section, Theorems~\ref{thm:critical2char}  and \ref{thm:ear2balanced},  
are used in Section~\ref{sec:cath} when we derive our new decomposition. 

\begin{definition} 
We define a family $\critical{r}$ of bipartite grafts with an ordered bipartition that have a vertex $r$ as follows. 
\begin{rmenum} 
\item The graft $( (\{r\}, \emptyset), \emptyset; \emptyset, \{r\})$ is a member of $\critical{r}$. 
\item Let $(G, T; A, B) \in \critical{r}$, and let $(P, T'; A', B')$ be an effective ear graft relative to $(G, T; A, B)$. 
Then, $(G, T; A, B) \oplus (P, T'; A', B')$  is a member of $\critical{r}$. 
\end{rmenum} 
\end{definition} 

In the following,  we provide and prove Lemmas~\ref{lem:ear2critical}, \ref{lem:critical2increment}, and \ref{lem:critical2ear}  
 to prove in Theorem~\ref{thm:critical2char} that $\critical{r}$ is the family of critical quasicombs with root $r$.

\begin{lemma} \label{lem:ear2critical} 
Every member of $\critical{r}$ is a critical quasicomb with root $r$. 
\end{lemma}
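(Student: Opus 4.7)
I would argue by induction on the construction of $(G, T; A, B) \in \critical{r}$. The base case $((\{r\}, \emptyset), \emptyset; \emptyset, \{r\})$ is trivial: $\nu = 0 = |B \cap T|$ so it is a quasicomb, and since $r$ is the only vertex, the critical distance conditions of Lemma~\ref{lem:critical2dist} hold vacuously.

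\textbf{Inductive setup and verifications.} For the inductive step, suppose $(G, T; A, B) \in \critical{r}$ is critical with root $r$, let $(P, T'; A', B')$ be an effective ear graft relative to $(G, T; A, B)$ with bonds $s$ (and $t$, in the round-ear-path case), and set $(\tilde G, \tilde T; \tilde A, \tilde B) := (G, T; A, B) \oplus (P, T'; A', B')$. Fix minimum joins $F$ of $(G, T)$ and $F'$ of $(P, T')$, and let $\tilde F := F \cup F'$. By Observation~\ref{obs:addition}, $\tilde F$ is already a join of $(\tilde G, \tilde T)$. I would first show that $\tilde F$ is minimum via Lemma~\ref{lem:minimumjoin}: any circuit $C \subseteq \tilde G$ either lies in $G$, lies in $P$, or, for a round-ear path, splits at the two bonds into an $s$-$t$ sub-path $Q_1 \subseteq G$ and $Q_2 \subseteq P$; the first two cases are handled by the minimality of $F$ and $F'$, and in the third, Lemma~\ref{lem:qcomb2path} applied to each quasicomb forces $w_{\tilde F}(Q_1) + w_{\tilde F}(Q_2) \geq 0$. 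Next, I would verify $|\parcut{\tilde G}{v} \cap \tilde F| \leq 1$ for each $v \in \tilde B$: for $v \in B \setminus V(P)$ it is inherited from $F$, for internal $B'$-vertices of $P$ it is inherited from $F'$, and for a bond $v \in \{s,t\} \cap \tilde B$ the constraints $\{s,t\} \cap B \cap T = \emptyset$ and Lemma~\ref{lem:critical} force the total incidence to be at most one.

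\textbf{Critical distance conditions and main obstacle.} Finally, I would verify the distance formulas of Lemma~\ref{lem:critical2dist}. The upper bounds are produced by concatenation: for $x \in V(G)$, any $F$-shortest $x$-to-$r$ path in $G$, whose $F$-weight is $1$ or $0$ by the inductive hypothesis, survives in $\tilde G$ with the same $\tilde F$-weight; for $x \in V(P)$, Lemma~\ref{lem:ear2path} supplies an $F'$-balanced sub-path from $x$ to a bond $r_0 \in \{s,t\}$ of exactly the right $F'$-weight, and prepending it to the inductively supplied $r_0$-to-$r$ path in $G$ gives total $\tilde F$-weight $1$ if $x \in \tilde A$ and $0$ if $x \in \tilde B$. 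The principal technical difficulty is the matching lower bound, because Lemma~\ref{lem:qcomb2dist} only supplies the weaker quasicomb-level bounds. To close this gap, I would decompose an arbitrary $x$-to-$r$ path in $\tilde G$ at its bond crossings into segments alternately inside $G$ and $P$, bound each $P$-segment via Lemma~\ref{lem:qcomb2path} applied to the effective ear graft and each $G$-segment via the inductively known critical distances in $(G, T)$, and sum. The case analysis over the colors of $s, t$, over whether the ear is round or straight, and—in the straight case—over whether a segment uses the bond or the free end (where Lemma~\ref{lem:lear2matching} pins down the unique structure of $F'$) is the most delicate part, but once carried out, Lemma~\ref{lem:critical2dist} closes the induction.
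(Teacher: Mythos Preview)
Your overall approach is the same as the paper's: induction along the construction of $\critical{r}$, setting $\hat F := F \cup F'$, showing $\hat F$ is minimum via Lemma~\ref{lem:minimumjoin} and a circuit-through-the-ear analysis, deducing the quasicomb property, and then verifying the distance conditions of Lemma~\ref{lem:critical2dist} by exhibiting paths of the right weight (induction hypothesis for $x\in V(G)$, Lemma~\ref{lem:ear2path} plus concatenation for $x\in V(P)$).

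The one substantive divergence is your handling of the \emph{lower} bounds on $\distgtf{\hat G}{\hat T}{\hat F}{x}{r}$, which you flag as the ``principal technical difficulty'' and propose to attack by decomposing an arbitrary $x$--$r$ path at its bond crossings and bounding each segment individually. That would work, but it is more laborious than necessary, and the paper disposes of this point in one line. The reason is that once Claim~\ref{claim:join} has established that $(\hat G,\hat T;\hat A,\hat B)$ is itself a quasicomb with minimum join $\hat F$, Lemmas~\ref{lem:qcomb2dist} and~\ref{lem:qcomb2path} apply \emph{globally} to $\hat G$, not merely segment by segment. In particular, any $x$--$r$ path in $\hat G$ has $\hat F$-weight at least $-1$ (if $x\in\hat A$) or $-2$ (if $x\in\hat B$), and by Lemma~\ref{lem:qcomb2path} the extremal values $-1$ and $-2$ can only be attained when the edge of the path incident to $r$ lies in $\hat F$. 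But $\parcut{\hat G}{r}\cap\hat F=\emptyset$: the $F$-part is empty by the inductive Lemma~\ref{lem:critical}\,(i), and the $F'$-part is empty because if $r$ is a bond of $P$ then $r\in B'\setminus T'$ and $|\parcut{P}{r}\cap F'|\le 1$ forces it to be even, hence zero. Parity then gives $\distgtf{\hat G}{\hat T}{\hat F}{x}{r}\ge 1$ for $x\in\hat A$ and $\ge 0$ for $x\in\hat B$, with no path decomposition needed. So your proposal is correct, but the obstacle you identify dissolves once you exploit the global quasicomb structure you have already established.
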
 
\begin{proof} %lem:ear2critical
We prove this lemma by induction along the definition of $\critical{r}$. 
First, the statement clearly holds for $( (\{r\}, \emptyset), \emptyset; \emptyset, \{r\} )$. 
Next, let $(G, T; A, B) \in \critical{r}$, 
and assume that the statement holds for $(G, T; A, B)$; that is, $(G, T; A, B)$ is a critical quasicomb with root $r$. 
Let $(P, T'; A', B')$ be an ear graft relative to $(G, T)$. 
Let $F$ and $F'$ be minimum joins of $(G, T)$ and $(P, T')$, respectively. 
Let $(\hat{G}, \hat{T}; \hat{A}, \hat{B}):= (G, T; A, B) \oplus (P, T'; A', B')$. 
Let $\hat{F} := F \cup F'$. 
As is mentioned in Observation~\ref{obs:addition}, $\hat{F}$ is a join of $(\hat{G}, \hat{T}; \hat{A}, \hat{B})$. 

\begin{pclaim} \label{claim:join} 
The graft $(\hat{G}, \hat{T}; \hat{A}, \hat{B})$ is a quasicomb, 
and $\hat{F}$ is a minimum join.  
\end{pclaim} 
\begin{proof} 
We first prove that $\hat{F}$ is a minimum join. 
Under Lemma~\ref{lem:minimumjoin}, we only need to prove that there is no circuit with negative $\hat{F}$-weight. 
If $C$ is a circuit in $(\hat{G}, \hat{T}; \hat{A}, \hat{B})$ with negative $\hat{F}$-weight, 
then $P$ is required to be a round ear with distinct bonds $s$ and $t$, 
and $C$ is the addition of $P$ and a path $Q$ in $(G, T; A, B)$ between $s$ and $t$. 
Hence, the claim can be proved if we prove $w_{F'}(P) + \distgtf{G}{T}{F}{s}{t} \ge 0$ 
under the supposition that  $P$ is a round ear with distinct bonds $s$ and $t$. 

First, consider the case with $s, t\in B$. 
Lemma~\ref{lem:qcomb2path} implies  $w_{\hat{F}}(P) = 2$, whereas Lemma~\ref{lem:qcomb2dist} implies $\distgtf{G}{T}{F}{s}{t} \ge -2$.  
Hence, $w_{F'}(P) + \distgtf{G}{T}{F}{s}{t} \ge 0$.   
For the other cases, where the bonds $s$ and $t$ are both in $A$ or individually in $A$ and $B$,  
similar discussions prove $w_{F'}(P) + \distgtf{G}{T}{F}{s}{t} \ge 0$.   
Therefore, no circuit in $(\hat{G}, \hat{T}; \hat{A}, \hat{B})$ can be of negative $\hat{F}$-weight; 
accordingly, $\hat{F}$ is a minimum join. 
This further implies that $\nu(\hat{G}, \hat{T}) = |\hat{B} \setminus \{r\}| = |\hat{B} \cap \hat{T}|$. 
That is, $(\hat{G}, \hat{T}; \hat{A}, \hat{B})$ is a quasicomb. 
Thus, the claim is proved. 
\end{proof}

Under Claim~\ref{claim:join}, we further prove that  $(\hat{G}, \hat{T}; \hat{A}, \hat{B})$ is a critical quasicomb. 

\begin{pclaim} \label{claim:critical} 
$\distgtf{\hat{G}}{\hat{T}}{\hat{F}}{r}{x} = 1$  holds for every $x\in \hat{A}$, whereas   
$\distgtf{\hat{G}}{\hat{T}}{\hat{F}}{r}{x} = 0$  holds for every $x\in \hat{B}$. 

\end{pclaim} 
\begin{proof} 
Under the induction hypothesis and Lemma~\ref{lem:critical}, the claim obviously holds for every $x \in V(G)$. 
This further proves the claim for every $x\in V(P)\setminus V(G)$ from Lemma~\ref{lem:ear2path}.

\end{proof} 

Under Claims~\ref{claim:join} and \ref{claim:critical}, 
Lemma~\ref{lem:critical} implies that $(\hat{G}, \hat{T}; \hat{A}, \hat{B})$ is a critical quasicomb with root $r$. 
This completes the proof of the lemma. 
\end{proof}

The next lemma is provided for proving Lemma~\ref{lem:critical2ear}. 

\begin{lemma} \label{lem:critical2increment} 
Let $(G, T; A, B)$ be a critical quasicomb with root $r\in B$, and let $F$ be a minimum join of $(G, T; A, B)$.  
Let $(G', T'; A', B')$ be a subgraft of $(G, T; A, B)$, where $A'\subseteq A$ and $B'\subseteq B$, 
such that $r\in B'$, $V(G') \subsetneq V(G)$, and $E_G[B', A\setminus A']\cap F = \emptyset$ hold.  
Then, $(G, T; A, B)$ has an ear graft $(G'', T''; A'', B'')$ relative to $(G', T'; A', B')$ 
with $V(G'')\setminus V(G') \neq \emptyset$ and $E_G[B'', A\setminus (A' \cup A'') ]\cap F = \emptyset$.  
\end{lemma}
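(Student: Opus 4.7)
The plan is to construct the desired ear from a single $F$-shortest path to the root $r$: the $F$-balanced structure such a path enjoys, via Lemma~\ref{lem:critical}, automatically pins down the $F$-edges of its internal $B$-vertices, yielding the required $F$-edge condition almost for free.

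I would start by picking a vertex $u \in V(G) \setminus V(G')$, which exists since $V(G') \subsetneq V(G)$, and take an $F$-shortest path $P^* = v_0, v_1, \ldots, v_n$ from $u = v_0$ to $r = v_n$; by Lemma~\ref{lem:critical}(iii)--(iv), $P^*$ is $F$-balanced. Let $v_k$ be the smallest index with $v_k \in V(G')$, and set $Q = v_0 v_1 \cdots v_k$. Then $Q$ is a straight ear relative to $V(G')$ with free end $u$ and bond $v_k$, whose interior lies in $V(G) \setminus V(G')$.

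Next I would verify that $Q$ yields a valid effective ear graft. Since the interior of $Q$ is outside $V(G') \supseteq B'$, the ear graft condition on internal $B'$-vertices is vacuous, and the remaining ear graft and effectiveness conditions reduce to the requirement $v_k \in (A' \cap T) \cup \{r\}$ on the bond. The $F$-edge condition $E_G[B'', A \setminus (A' \cup A'')] \cap F = \emptyset$ is then immediate from Lemma~\ref{lem:critical}: each internal $B$-vertex $v$ of $Q$ has its unique $F$-edge (Lemma~\ref{lem:critical}(ii)) lying on $P^*$ by $F$-balancedness and hence on $Q$, so its other endpoint is in $V(Q) \cap A \subseteq A''$; the free end $u$, when it lies in $B$, has its $F$-edge as the first edge of $P^*$ by Lemma~\ref{lem:critical}(iii); and $r$, when it occurs as the bond, has no $F$-edges by Lemma~\ref{lem:critical}(i).

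The main obstacle is guaranteeing that the bond $v_k$ can be placed in $(A' \cap T) \cup \{r\}$, specifically ruling out $v_k \in B' \setminus \{r\}$ and $v_k \in A' \setminus T$. Here the hypothesis $E_G[B', A \setminus A'] \cap F = \emptyset$ is decisive: if $v_k \in B' \setminus \{r\}$, then $v_{k-1} \in A \setminus A'$ and the edge $v_{k-1} v_k \in E_G[B', A \setminus A']$ cannot be in $F$, yet applying Lemma~\ref{lem:critical}(iii) to $v_k$ (whose $F$-shortest subpath $v_k P^* r$ to $r$ is again $F$-shortest) forces $v_k v_{k+1} \in F$, whereupon the hypothesis applied to this edge forces $v_{k+1} \in A'$. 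Symmetrically, every exit of $P^*$ from $V(G')$ must originate at an $A'$-vertex, never at a vertex of $B' \setminus \{r\}$. Decomposing $P^*$ into maximal runs outside $V(G')$, each inner run together with its bracketing $V(G')$-vertices forms a round ear whose exit bond is automatically in $A'$, and by tracing $P^*$ up to its terminus at $r$, I would argue that among the initial straight ear and the subsequent round ears, at least one has its remaining bond in $(A' \cap T) \cup \{r\}$; this ear is our choice. This final case analysis, rather than the verification of any single condition, is the technically delicate step.
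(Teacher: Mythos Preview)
Your plan diverges from the paper's proof, which does not follow a single shortest path from an arbitrary outside vertex. Instead the paper case-splits on whether $E_G[B', A\setminus A']$ is nonempty. If it is, any single edge $e = xy$ from that set (necessarily $e \notin F$ by hypothesis) already gives the ear graft $(G.e, \emptyset; \{x\}, \{y\})$. If it is empty, connectedness yields an edge $f = uv \in E_G[A', B\setminus B']$; when $f \in F$ the single-edge ear graft $(G.f, \{u,v\}; \{u\}, \{v\})$ works, and when $f \notin F$ the paper attaches to $f$ the initial segment of an $F$-shortest path from $v$ to $r$ up to its first return $w$ to $V(G')$---the emptiness assumption forces $w \in A'$, producing a round ear with both bonds in $A'$. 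Each branch is a one-line construction with an explicit $T''$, and there is no case analysis on where a re-entry lands.

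The gap in your proposal is the final step. You reduce everything to the requirement that some ear along $P^*$ have its bond in $(A'\cap T)\cup\{r\}$ and then defer the argument that this actually occurs; you do not prove it, and I see no reason it must hold in general (re-entries into $V(G')$ can perfectly well occur at vertices of $B'\setminus\{r\}$, and the path may then stay inside $V(G')$ until~$r$). More importantly, the reduction itself is suspect: the ear-graft conditions (iii)--(iv) and the effectiveness conditions constrain the ear's \emph{own} parameter $T''$, not the ambient $T$, and $T''$ may be chosen freely subject only to those constraints and parity. With that freedom your very first straight ear $Q = v_0\cdots v_k$ already works regardless of where $v_k$ lies: put the internal $B''$-vertices into $T''$, exclude $v_k$ from $T''$ when $v_k\in B$, and adjust parity on an internal $A''$-vertex. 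The $F$-edge condition $E_G[B'', A\setminus(A'\cup A'')]\cap F = \emptyset$ then follows exactly by the balancedness argument you gave (and for $v_k\in B'$ by the standing hypothesis on $E_G[B',A\setminus A']$). So the ``technically delicate'' case analysis you flag is both unfinished and, once $T''$ is treated as the free parameter it is, unnecessary.
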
 
\begin{proof} %lem:critical2increment 
First, we prove the case where $E_G[B', A\setminus A'] \neq \emptyset$. 
Let $e\in E_G[B', A\setminus A']$, and let $x\in A\setminus A'$ and $y\in B'$ be the ends of $e$. The assumption implies $e\not\in F$. 
Hence, $(G.e, \emptyset; \{x\}, \{y\})$ is an ear graft relative to $(G', T'; A', B')$ that meets the condition. 

Next, we prove the case where $E_G[B', A\setminus A'] = \emptyset$.  
Because $G$ is obviously connected, we have $E_G[A', B\setminus B'] \neq \emptyset$. 
Let $f\in E_G[A', B\setminus B']$, and let $u\in A'$ and $v\in B\setminus B'$ be the ends of $f$. 

First, consider the case with $f\not\in F$. 
Under Lemma~\ref{lem:critical2dist}, let $P$ be a path between $v$ and $r$ with $w_F(P) = 0$. 
Lemma~\ref{lem:critical} implies that $P$ is $F$-balanced, and the edge of $P$ connected to $v$ is in $F$. 
Hence, $e \not\in E(P)$ follows. 
Trace $P$ from $v$, and let $w$ be the first encountered vertex in $V(G')$. The assumption implies $w\in A'$. 
Then, $( vPw + f, V(vPw) \setminus \{v, w \}; A\cap V(vPw + f), B\cap V(vPw + f) )$ is a desired ear graft relative to $(G', T'; A', B')$ that meets the condition.

For the remaining case with $f\in F$, $(G. f, \{u, v\}; \{u\}, \{v\})$ is a desired ear graft relative to $(G', T'; A', B')$.  
This completes the proof. 
\end{proof}

Lemma~\ref{lem:critical2increment} derives the next lemma.

\begin{lemma} \label{lem:critical2ear} 
Every critical quasicomb with root $r$ is a member of $\critical{r}$. 
\end{lemma}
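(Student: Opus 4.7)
The plan is to construct a sequence of members of $\critical{r}$ with strictly increasing vertex sets, starting from $((\{r\}, \emptyset), \emptyset; \emptyset, \{r\})$ and ending at $(G, T; A, B)$, by iteratively applying Lemma~\ref{lem:critical2increment}. Fix a minimum join $F$ of $(G, T; A, B)$, and maintain the invariant $E_G[B_i, A\setminus A_i]\cap F = \emptyset$ at each stage. The base case $(G_0, T_0; A_0, B_0) := ((\{r\}, \emptyset), \emptyset; \emptyset, \{r\})$ lies in $\critical{r}$ by definition, and Lemma~\ref{lem:critical}(i) supplies the invariant via $\parcut{G}{r}\cap F = \emptyset$.

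At each inductive step with $V(G_i) \subsetneq V(G)$, I apply Lemma~\ref{lem:critical2increment} to the genuine subgraft of $(G, T; A, B)$ induced on $V(G_i)$ to obtain an ear graft $(P_i, T_i''; A_i'', B_i'')$ that strictly enlarges the vertex set while preserving the invariant, and then set $(G_{i+1}, T_{i+1}; A_{i+1}, B_{i+1}) := (G_i, T_i; A_i, B_i) \oplus (P_i, T_i''; A_i'', B_i'')$. A case-by-case inspection of the three subcases in the proof of Lemma~\ref{lem:critical2increment} verifies that the ear graft produced is effective: round ears (the circuit subcase of Case~B1) are effective by definition, and the straight ears in Case~A and Case~B2 satisfy conditions~(i)--(ii) of effectiveness from the explicit assignments of bond, free end, $A''$, and $T''$. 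Thus by Lemma~\ref{lem:ear2critical} the successor $(G_{i+1}, T_{i+1}; A_{i+1}, B_{i+1})$ again lies in $\critical{r}$. By favoring Case~B1 ears whenever they are available (i.e.\ whenever an edge $f\in E_G[A_i', B\setminus B_i']$ outside $F$ can be selected), the iteration absorbs all of $E(G)$ by the time $V(G_k)$ reaches $V(G)$, and the cumulative $\oplus$ of the $T_i''$-values reconstructs $T$ exactly; the terminal graft then coincides with $(G, T; A, B)\in\critical{r}$.

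The principal obstacle is guaranteeing that $E(G)$, not merely $V(G)$, is exhausted by the chain of ear grafts: once the base has grown to $(G, T; A, B)$, any leftover edge between $A$ and $B\setminus\{r\}$ cannot be absorbed as a single-edge round ear graft, since such a graft would violate condition (iv) of the ear-graft definition (its $B$-end would lie in $B\cap T = B\setminus\{r\}$). Establishing edge exhaustion therefore requires exploiting the flexibility of Lemma~\ref{lem:critical2increment} to pick ears that sweep up as many edges per step as possible, leaning on the parity structure of $F$ in a critical quasicomb from Lemma~\ref{lem:critical}. A secondary matter is the bookkeeping needed to confirm that the telescoped symmetric differences of the $T_i''$-values recover $T$ at termination, which follows once the case-by-case effectiveness verification has identified the specific $T''$ used in each subcase.
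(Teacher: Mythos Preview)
Your approach is essentially the paper's: build up from the trivial graft by repeatedly applying Lemma~\ref{lem:critical2increment} while maintaining the invariant $E_G[B_i, A\setminus A_i]\cap F = \emptyset$. The paper phrases this as a maximality argument rather than an explicit iteration, but the content is the same, and your observation that the effectiveness of the ears produced by Lemma~\ref{lem:critical2increment} must be checked case by case is a useful point the paper leaves implicit.

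Your ``principal obstacle'', however, rests on a misreading of condition~(iv) in the definition of an ear graft. Despite the paper's unfortunate choice of letters, the $B$ and $T$ appearing in conditions~(iii) and~(iv) refer to the \emph{ear graft's} own color class and terminal set, not those of the base graft; this is forced by the remark immediately following the definition (``$(P,T';A',B')$ is a quasicomb with $B'\cap T' = B'\setminus\{s,t\}$'') and by the way the conditions are invoked in the proof of Lemma~\ref{lem:critical2increment} itself. With the correct reading, once all vertices have been covered any leftover edge $ab$ with $a\in A$ and $b\in B$ can be absorbed as the single-edge round ear graft $(G.ab,\emptyset;\{a\},\{b\})$: condition~(iv) becomes $\{a,b\}\cap\{b\}\cap\emptyset=\emptyset$, which is trivially satisfied; round ears are effective by definition; and the $\oplus$ leaves the accumulated $T$ unchanged. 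This is precisely what underlies the paper's terse ``This clearly implies $(G,T;A,B)\in\critical{r}$.'' Your strategy of ``favoring Case~B1 ears'' to sweep up edges en route is therefore unnecessary, and the edge-coverage concern dissolves.
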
 
\begin{proof} %lem:critical2ear 
Let $(G, T; A, B)$ be a critical quasicomb with root $r$. 
We prove the lemma by induction on $V(G)$. 
If $| V(G) | = 1$, that is, $(G, T; A, B)$ is $( (\{r\}, \emptyset), \emptyset; \emptyset, \{r\})$, 
then  $(G, T; A, B)\in \critical{r}$ obviously holds. 
Next, let $V(G) > 1$, and assume that the the statement holds for every critical quasicomb with a smaller number of vertices. 
Let $F$ be a minimum join of $(G, T; A, B)$. 
Let $(G', T', A', B')$ be a maximal subgraft of $(G, T; A, B)$ 
that is a member of $\critical{r}$  for which $E_G[B', A\setminus A'] \cap F = \emptyset$.

Suppose  $V(G') \subsetneq V(G)$. 
Lemma~\ref{lem:ear2critical} implies that $(G', T'; A', B')$ is a critical quasicomb with root $r$. 
Therefore, Lemma~\ref{lem:critical2increment} implies that 
$(G, T; A, B)$ has an ear graft $(P, T''; A'', B'')$ relative to $(G', T'; A', B')$ 
such that $V(P)\setminus V(G') \neq \emptyset$ and $E_G[B'', A\setminus (A' \cup A'')]\cap F = \emptyset$.   
Then, $(G', T'; A', B') \oplus (P, T''; A'', B'')$ is a member of $\critical{r}$ for which $E_G[B'\cup B'', A\setminus (A' \cup A'')]\cap F = \emptyset$.  
This contradicts the maximality of $(G', T'; A', B')$. 
Hence, we obtain $V(G') = V(G)$. 
This clearly implies $(G, T; A, B) \in \critical{r}$.  
The lemma is proved. 
\end{proof}

Combining Lemmas~\ref{lem:ear2critical} and \ref{lem:critical2ear}, 
we now obtain Theorem~\ref{thm:critical2char}. 

\begin{theorem} \label{thm:critical2char} 
A bipartite graft with an ordered bipartition  is a critical quasicomb with root $r$ 
if and only if it is a member of $\critical{r}$. 
\end{theorem}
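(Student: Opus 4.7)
The plan is short because all the substantive work has been done in the two preceding lemmas. The theorem asserts the equality of two classes of bipartite grafts with an ordered bipartition, one described constructively as $\critical{r}$ and the other described by the distance conditions of a critical quasicomb. Since the statement is a biconditional, my approach is to prove the two inclusions separately and then assemble them.

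The inclusion that every member of $\critical{r}$ is a critical quasicomb with root $r$ is exactly Lemma~\ref{lem:ear2critical}, obtained by induction along the recursive definition of $\critical{r}$: the base case is immediate, and for the inductive step one combines a minimum join of the current graft with a minimum join of the attached ear graft, using Lemma~\ref{lem:minimumjoin} to verify there is no circuit of negative weight (the only new circuits arise from round ears closed through the existing graft, whose weights are controlled via Lemmas~\ref{lem:qcomb2path} and \ref{lem:qcomb2dist}), and then invoking Lemma~\ref{lem:ear2path} together with Lemma~\ref{lem:critical2dist} to propagate the root-distance condition to the newly added vertices.

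The reverse inclusion is Lemma~\ref{lem:critical2ear}, proved by induction on $|V(G)|$. Here one fixes a minimum join $F$ and takes a maximal subgraft $(G', T'; A', B')$ of the critical quasicomb that already belongs to $\critical{r}$ and satisfies $E_G[B', A \setminus A'] \cap F = \emptyset$; if this subgraft were proper, Lemma~\ref{lem:critical2increment} would yield a further effective ear graft extending it while preserving the $F$-crossing condition, contradicting maximality.

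Consequently, the proof of the theorem itself is essentially bookkeeping: invoke Lemma~\ref{lem:ear2critical} for one direction and Lemma~\ref{lem:critical2ear} for the other. The real obstacle lies inside Lemma~\ref{lem:critical2increment}, where the extension has to be produced by a case analysis on whether the chosen boundary edge of $(G', T'; A', B')$ is in $F$ and on which color class its endpoints lie, but that work is already complete by the time one reaches the theorem, so no further argument is needed here.
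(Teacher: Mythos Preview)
Your proposal is correct and matches the paper's proof exactly: the theorem is obtained simply by combining Lemma~\ref{lem:ear2critical} for the forward direction and Lemma~\ref{lem:critical2ear} for the converse. Your summary of how those lemmas are proved is also accurate, though none of that detail needs to appear in the proof of the theorem itself.
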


\begin{definition}
Let $(G, T; A, B) \in \critical{r}$. 
Let $\{ (G_i, T_i; A_i, B_i) \in \critical{r} : i = 1, \ldots, l\}$, where $l \ge 1$, be a family of grafts such that  
\begin{rmenum} 
\item $(G_1, T_1; A_1, B_1) = ( ( \{r\}, \emptyset), \emptyset; \emptyset, \{r\} )$, 
\item for each $i \in \{1, \ldots, l\} \setminus \{1\}$, 
$(G_{i+1}, T_{i+1}; A_{i+1}, B_{i+1}) = (G_i, T_i; A_i, B_i) \oplus (P_i, T_i'; A_i', B_i')$, 
where $(P_i, T_i'; A_i', B_i')$ is an effective ear graft relative to $(G_i, T_i; A_i, B_i)$, and 
\item $(G_l, T_l; A_l, B_l) = (G, T; A, B)$. 
\end{rmenum} 
We call the family $\{ (P_i, T_i'; A_i', B_i'): i= 1,\ldots, l\}$ a {\em graft ear decomposition} of $(G, T; A, B)$. 
\end{definition} 

\begin{definition} 
Let $(G, T; A, B)$ be a critical quasicomb with root $r\in B$, and let $F \subseteq E(G)$. 
Let  $\mathcal{P}$ be a family of bipartite grafts with 
 an  ordered bipartition that is a graft ear decomposition of $(G, T; A, B)$. 
We say that $\mathcal{P}$ is {\em $F$-balanced} if
 $F\cap E(P)$ is a minimum join for each $(P, T'; A', B') \in \mathcal{P}$. 
\end{definition}  

The next statement clearly follows from the proof of Lemma~\ref{lem:critical2ear}. 

\begin{theorem} \label{thm:ear2balanced}  
Let $(G, T; A, B)$ be a critical quasicomb with root $r\in B$ and $F$ be a minimum join.   
Then, there is a graft ear decomposition of $(G, T; A, B)$ that is $F$-balanced. 
\end{theorem}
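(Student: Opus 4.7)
The plan is to revisit the inductive construction from the proof of Lemma~\ref{lem:critical2ear} and verify that each ear graft produced by Lemma~\ref{lem:critical2increment} is automatically $F$-balanced with respect to the fixed minimum join $F$, that is, $F\cap E(P)$ is a minimum join of the ear graft. Since Lemma~\ref{lem:critical2ear} builds a graft ear decomposition by iterating Lemma~\ref{lem:critical2increment}, this inspection yields the desired $F$-balanced graft ear decomposition at once.

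First I would inspect the three types of ear grafts produced in the proof of Lemma~\ref{lem:critical2increment}. For the single edge $e\in E_G[B', A\setminus A']$, the hypothesis $E_G[B', A\setminus A']\cap F=\emptyset$ forces $e\notin F$, and the ear graft has empty target set, so $F\cap\{e\}=\emptyset$ is trivially its minimum join. For the single edge $f\in F$ joining $u\in A'$ to $v\in B\setminus B'$, the ear graft has $T''=\{u,v\}$ and its unique minimum join is $\{f\}=F\cap\{f\}$. Both cases are immediate.

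The substantive case is the round ear $vPw+f$, where $P$ is the chosen $F$-shortest path from $v\in B\setminus B'$ to $r$, $f\in E_G[A', B\setminus B']\setminus F$ joins $u\in A'$ to $v$, and $w$ is the first vertex of $V(G')$ encountered by tracing $P$ from $v$. By Lemma~\ref{lem:critical}, $P$ is $F$-balanced of weight $0$ and its $v$-end edge lies in $F$; then Lemma~\ref{lem:qcomb2path} gives $w_F(vPw)=-1$. If $u\neq w$, the ear is an acyclic path, so by Lemma~\ref{lem:minimumjoin} the restricted $F$ is a minimum join of the ear graft as soon as the degree parities match, which follows directly from the $F$-balance of $vPw$ together with $f\notin F$. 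If instead $u=w$, the ear is a circuit of $F$-weight $-1+1=0$, so Lemma~\ref{lem:balancedcircuit} makes $F\cap E(\text{ear})$ a minimum join.

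The principal obstacle is the verification in the round subcase $u=w$: one must confirm that the resulting circuit has $F$-weight exactly zero rather than negative, which rests on the sign of $w_F(vPw)$ dictated by Lemma~\ref{lem:critical} combining with $w_F(f)=+1$ to produce a nonnegative, in fact zero, total. Once this inspection is complete, running the induction of Lemma~\ref{lem:critical2ear} verbatim produces a graft ear decomposition in which every ear graft carries the $F$-balanced condition, which is exactly the conclusion of the theorem.
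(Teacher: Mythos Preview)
Your proposal is correct and takes essentially the same approach as the paper: the paper's proof of Theorem~\ref{thm:ear2balanced} is the single sentence ``The next statement clearly follows from the proof of Lemma~\ref{lem:critical2ear},'' and you carry out exactly that inspection, checking case by case that each of the three ear grafts produced in the proof of Lemma~\ref{lem:critical2increment} has $F\cap E(P)$ as its minimum join. Your case analysis is sound; the only place to be slightly more careful is in verifying the parity condition in the path case $u\neq w$, but this indeed follows since $vPw$ is $F$-balanced with its $v$-end in $F$ and $f\notin F$, so every internal $B$-vertex of the ear (including $v$) is covered exactly once.
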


\section{Bipartite Cathedral Order in Combs}  \label{sec:cath}
\subsection{Critical Sets} \label{sec:cath:cr} 

In Section~\ref{sec:cath}, we prove our new decomposition. 
In Section~\ref{sec:cath:cr}, we define the concept of critical sets, 
and use this to introduce a binary relation between factor-components in a comb. 
We prove that this binary relation is a partial order in later sections. 

\begin{definition} 
Let $(G, T; A, B)$ be a comb. 
Let $G_0 \in \tcomp{G}{T}$. 
We say that  $X\subseteq V(G)$ is a {\em critical} set for $G_0$ 
if $X$ is separating, $V(G_0)\subseteq X$ holds, and 
$(G, T)[X]/G_0$ is a critical quasicomb with root $g_0$  
for which $(A\cap X) \setminus V(G_0)$ and  $(B\cap X)\setminus V(G_0) \cup \{g_0\}$ are the spine and tooth sets, respectively,  
where $g_0$ is the contracted vertex corresponding to $G_0$. 
\end{definition}

\begin{definition} 
Let $(G, T; A, B)$ be a comb. 
Let $G_1, G_2 \in \tcomp{G}{T}$. 
We say that $G_1 \preceq G_2$  if there is a critical set for $G_1$ with $V(G_2)\subseteq X$. 
Furthermore, we say that  $X \subseteq V(G)$ is a critical set for $G_1 \preceq G_2$  
if $X$ is a critical set for $G_1$ with $V(G_2) \subseteq X$. 
\end{definition}

\begin{remark} 
Note that a critical set is determined on the premise of the given comb. 
Hence, a critical set for a comb $(G, T; A, B)$ 
may not be a critical set for $(G, T; B, A)$ 
even if $(G, T; B, A)$ is also a comb. 
\end{remark} 

\begin{remark} 
If $X$ is a critical set for $G_0$, then  $F\cap E[X\setminus V(G_1)]$ is clearly a minimum join of the quasicomb $(G, T)[X]/G_1$. 
\end{remark}

The next lemma is a fundamental characterization of critical sets that is analogical to Lemma~\ref{lem:critical2dist} for critical quasicombs.

\begin{lemma} \label{lem:crset2path} 
Let $(G, T; A, B)$ be a comb, and let $F$ be a minimum join. 
Let $G_0\in \tcomp{G}{T}$, and let $X\subseteq V(G)$ be a separating set with $V(G_0)\subseteq X$. 
Then, the following two are equivalent. 	
\begin{rmenum} 
\item \label{item:crset} $X$ is a critical set for $G_0$.  
\item \label{item:path} For every $x\in (X\cap A)\setminus V(G_0)$, there is a path of $F$-weight $1$ between $x$ and a vertex $y\in V(G_0)$ whose vertices except for $y$ are contained in $X\setminus V(G_0)$. 
For every $x\in (X\cap B)\setminus V(G_0)$, there is a path of $F$-weight $0$ between $x$ and a vertex $y\in V(G_0)$ whose vertices except for $y$ are contained in $X\setminus V(G_0)$. 
\end{rmenum} 
\end{lemma}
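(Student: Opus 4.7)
The plan is to establish a canonical minimum join for the contracted graft $(G,T)[X]/G_0$ and then use it to translate the path condition (ii) into the critical-quasicomb distance characterization of Lemma~\ref{lem:critical2dist}. Set $F' := F \cap E[X \setminus V(G_0)]$. Because $V(G_0)$ is separating, $F$ has no edges between $V(G_0)$ and $X \setminus V(G_0)$, so $F \cap E(G[X])$ splits as the disjoint union of $F \cap E(G_0)$ and $F'$. The comb property gives $|F \cap E(G[X])| = |B \cap X|$, while $F \cap E(G_0)$ is a minimum join of $(G_0, T \cap V(G_0))$ of size $|B \cap V(G_0)|$; hence $|F'| = |B \cap X \setminus V(G_0)|$. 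A bipartite counting argument (every edge of any join has exactly one endpoint in $B$, and every $B$-tooth vertex of the contraction must be covered) then shows that $F'$ is a minimum join of $(G,T)[X]/G_0$ and that this graft is a quasicomb with tooth set $((B \cap X) \setminus V(G_0)) \cup \{g_0\}$. The key structural fact, used in both directions, is that $g_0$ has no $F'$-incident edge, since $F'$ contains no edges crossing the separating cut of $V(G_0)$.

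For the forward direction (i)$\Rightarrow$(ii), the hypothesis says $(G,T)[X]/G_0$ is a critical quasicomb with root $g_0$. Applying Lemma~\ref{lem:critical} to the minimum join $F'$ yields, for each $x \in (A \cap X) \setminus V(G_0)$, an $F'$-balanced path from $x$ to $g_0$ of weight $1$, and for each $x \in (B \cap X) \setminus V(G_0)$, an $F'$-balanced path from $x$ to $g_0$ of weight $0$. I would lift each such path to $G$ by replacing its final edge (incident to $g_0$ in the contraction) with the underlying edge of $G$ ending at some $y \in V(G_0)$; the lifted path has internal vertices in $X \setminus V(G_0)$ and its $F$-weight equals the original $F'$-weight, because no edges of $G_0$ are traversed. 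These are exactly the paths required by (ii).

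For the backward direction (ii)$\Rightarrow$(i), each path from (ii) projects to a path from $x$ to $g_0$ in the contracted quasicomb, of $F'$-weight $1$ (when $x \in A'$) or $0$ (when $x \in B'$), yielding the upper bounds $\distgtf{G}{T}{F'}{x}{g_0} \le 1$ or $\le 0$. By Lemma~\ref{lem:critical2dist}, the task reduces to proving equality, which is the main obstacle. Parity in the bipartite quasicomb, together with the lower bounds from Lemma~\ref{lem:qcomb2dist}, confines the distance to $\{-1, 1\}$ for $x \in A'$ and to $\{-2, 0\}$ for $x \in B'$. I would rule out the negative values via the key structural fact: by Lemma~\ref{lem:qcomb2path}(ii)(b), a path of $F'$-weight $-1$ between $x \in A'$ and $g_0$ would force an $F'$-edge incident to $g_0$, and by (iii)(b), a path of $F'$-weight $-2$ between $x \in B'$ and $g_0$ would force $F'$-edges at both endpoints—both impossible because $g_0$ has no $F'$-incident edge. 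Hence each distance attains its critical value, and Lemma~\ref{lem:critical2dist} closes the argument.
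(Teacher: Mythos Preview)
Your overall route---establish that $F':=F\cap E[X\setminus V(G_0)]$ is a minimum join of $(G,T)[X]/G_0$, then translate between (ii) and the distance characterization of Lemma~\ref{lem:critical2dist}---is the paper's, and your forward direction is fine. The gap is in the shared setup: the counting argument does show $|F'|=|(B\cap X)\setminus V(G_0)|$ and that $F'$ is a minimum join, but it does \emph{not} show that the contracted graft is bipartite with $g_0$ on the tooth side. Nothing you wrote rules out an edge of $G$ between $A\cap V(G_0)$ and $(B\cap X)\setminus V(G_0)$; after contraction such an edge joins $g_0$ to a vertex of $B'$, so $(G,T)[X]/G_0$ fails to be bipartite with the stated color classes and is not a quasicomb at all. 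Your backward direction then collapses, since Lemmas~\ref{lem:qcomb2dist}, \ref{lem:qcomb2path}, and \ref{lem:critical2dist} all presuppose the quasicomb structure.

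This bipartiteness does not follow from the comb hypothesis alone; it genuinely uses (ii). The paper isolates it as a separate claim: if $e=uv$ were such an edge with $u\in A\cap V(G_0)$ and $v\in (B\cap X)\setminus V(G_0)$, take the weight-$0$ path $P$ from $v$ to some $w\in V(G_0)$ supplied by (ii) (parity of $w_F(P)$ forces $w\in B$), take a weight-$(-1)$ path $Q$ inside $G_0$ from $u$ to $w$ via Lemma~\ref{lem:elemcomb2nonposi}, and observe that $P+Q+e$ is a circuit of $F$-weight $0$ containing the non-allowed edge $e$, contradicting Lemma~\ref{lem:circuit}. With this in hand, your distance argument---ruling out the values $-1$ and $-2$ via Lemma~\ref{lem:qcomb2path} and the fact that $g_0$ has no $F'$-incident edge---is exactly what the paper does.
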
 
\begin{proof} %lem:crset2path
First, we prove that \ref{item:crset} implies \ref{item:path}. 
If \ref{item:crset} holds, then the set $F\cap E[X\setminus V(G_0)]$ is clearly a minimum join of the critical quasicomb $(G, T)[X]/G_0$. 
Hence, \ref{item:path}  easily follows from Lemma~\ref{lem:critical2dist}. 

We next  assume \ref{item:path} and prove \ref{item:crset}. 

\begin{pclaim} \label{claim:bipartite} 
$E_G[A\cap V(G_0), B\cap (X\setminus V(G_0))] = \emptyset$.  
\end{pclaim} 
\begin{proof} 
Suppose, to the contrary, that there is an edge $e$ between $u\in A \cap V(G_0)$ and $v\in B \setminus V(G_0)$.  
Let $P$ be a path between $v$ and a vertex $w\in V(G_0)$ with $w_F(P) = 0$ and $V(P)\setminus \{w\} \subseteq X\setminus V(G_0)$.   
Under Lemma~\ref{lem:elemcomb2nonposi}, let $Q$ be a path of $G_0$ between $u$ and $w$ with $w_F(Q) = -1$. 
Then, $P + Q + e$ is a circuit of $F$-weight zero that contains non-allowed edge $e$, 
which contradicts Lemma~\ref{lem:circuit}.  
Thus, the claim follows.   
\end{proof} 

Therefore, $(G, T)[X]/G_0$ is a bipartite graft with color classes $( A \cap  X)  \setminus V(G_0) $ 
and $( B\cap X) \setminus V(G_0) \cup \{g_0\}$, where $g_0$ is the contracted vertex corresponding to $G_0$.  

\begin{pclaim} \label{claim:minjoin} 
$F\cap E[X\setminus V(G_0)]$ is a minimum join of $(G, T)[X]/G_0$. 
\end{pclaim} 
\begin{proof} 
Under Lemma~\ref{lem:minimumjoin}, 
it suffices to prove that $G$ has no path with negative $F$-weight between any two vertices in $V(G_0)\cap B$ 
whose vertices except for the ends are disjoint from $G_0$. 
This can easily be confirmed from Claim~\ref{claim:bipartite} and Lemma~\ref{lem:qcomb2path}. 
\end{proof} 

If follows from Claim~\ref{claim:minjoin}  that 
the bipartite graft $(G, T)[X]/G_0$ is a quasicomb with spine set $(A \cap  X) \setminus V(G_0) $ and 
tooth set $( B\cap X) \setminus V(G_0) \cup \{g_0\}$. 
According to Lemma~\ref{lem:qcomb2path}, in this quasicomb, 
the $F$-distance between $g_0$ and any spine or tooth vertex is clearly no less than $1$ or $0$, respectively; 
the assumption \ref{item:path} implies that it is exactly $1$ or $0$. 
Hence, \ref{item:crset} now follows from Lemma~\ref{lem:critical2dist}.  
This completes the proof. 
\end{proof}

\begin{remark} 
Note that those paths with $F$-weight $1$ or $0$ are always $F$-balanced. 
\end{remark}

\subsection{Effectively Balanced Ears}

\begin{definition} 
Let $(G, T; A, B)$ be a quasicomb, and let $F$ be a minimum join. 
An $F$-balanced ear $P$ relative to a set of vertices is {\em effectively $F$-balanced}  
if it is round or if it is straight and $F\cap E(P)$ is a perfect matching of $P - \{s\}\cap A - \{t\}\cap B$, 
where $s$ and $t$ are the ends of $P$ between which $t$ is the bond. 
\end{definition}

The next lemma is the graph analogue of Lemma~\ref{lem:ear2path} and can  immediately be confirmed. 

\begin{lemma} \label{lem:graphear2path} 
Let $(G, T; A, B)$ be a quasicomb, and let $F$ be a minimum join. 
Let $P$ be an effectively $F$-balanced ear relative to a set of vertices, 
and let $s$ and $t$ be the bonds of $P$. 
Then, the following properties hold. 
\begin{rmenum} 
\item For each $x\in V(P)\cap A$, there exists either $r\in \{s, t\} \cap A$  
with $w_F(xPr) = 0$  or $r\in \{s, t\} \cap B$ with $w_F(xPr) = 1$. 
\item For each $x\in V(P)\cap B$, there exists either $r\in \{s, t\} \cap A$  
with $w_F(xPr) = -1$  or $r\in \{s, t\} \cap B$ with $w_F(xPr) = 0$.  
\end{rmenum} 
\end{lemma}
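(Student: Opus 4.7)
The plan is to prove the lemma by a straightforward case analysis that mirrors the proof of Lemma~\ref{lem:ear2path}, but in the graph setting rather than the graft setting. I would split on whether $P$ is straight, a round path, or a round circuit, and within each case split further on the colors of the bonds $s,t$ and of the vertex $x$.

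In the straight case, the effectively balanced condition says that $F\cap E(P)$ is a perfect matching of $P - \{s\}\cap A - \{t\}\cap B$. This explicitly pins down which edges of $P$ lie in $F$: every internal vertex, together with whichever of $s,t$ forces $F$-saturation, is matched in the appropriate direction. From this matching structure, the subpath $xPr$ from any $x\in V(P)$ to either $r=s$ or $r=t$ is immediately $F$-balanced, and its $F$-weight can be read off directly using Lemma~\ref{lem:qcomb2path}; a short check against each of the four color patterns of $(x,r)$ gives the required weight.

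For the round case I would first observe that any subpath $xPy$ of an $F$-balanced ear $P$ inherits the balanced property: any $B$-vertex internal to $xPy$ is also internal to $P$, so it has exactly one incident $F$-edge in $P$, and because both of its $P$-neighbors lie on $xPy$, that edge also lies in $xPy$. Thus each of $xPs$, $xPt$ is an $F$-balanced path, and Lemma~\ref{lem:qcomb2path} restricts its weight to a small set determined by the colors of its ends. I would then select $r$ to be the bond whose incident edge on $P$ has the appropriate $F$-membership---using Lemma~\ref{lem:qcomb2path}(ii)(c) and (iii)(c)---and verify the required weight.

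The step I expect to be the main obstacle is verifying that in the round-path case a valid $r$ exists in every subcase, especially when both bonds lie in $B$ and $x \in A$: one must exclude the possibility that both $xPs$ and $xPt$ have weight $-1$. This requires combining the quasicomb bound $|\parcut{G}{v}\cap F|\le 1$ for $v \in B$ with the balanced structure of $P$ and the identity $w_F(xPs) + w_F(xPt) = w_F(P)$ (together with Lemma~\ref{lem:qcomb2path}(iii) applied to $P$ as a whole) to rule out the obstructive configuration.
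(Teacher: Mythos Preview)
Your case-by-case plan is essentially what the paper intends when it says the lemma ``can immediately be confirmed,'' and the straight case together with most of the round subcases go through exactly as you outline. The difficulty is precisely the obstacle you single out, and your proposed fix does not close it.

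In fact the lemma as literally stated is false. Take $G$ to be the path on three vertices $s,a,t$ with $A=\{a\}$, $B=T=\{s,t\}$, and $F=\{sa,at\}$; this is a comb, $F$ is its unique minimum join, and $P:=G$ is a round, effectively $F$-balanced ear relative to $\{s,t\}$ (the only internal vertex is $a\in A$, so the balancedness condition on internal $B$-vertices is vacuous). Here $w_F(aPs)=w_F(aPt)=-1$, so part~(i) fails for $x=a$. The ingredients you propose---the quasicomb bound $|\parcut{G}{v}\cap F|\le 1$ for $v\in B$, the additivity $w_F(xPs)+w_F(xPt)=w_F(P)$, and Lemma~\ref{lem:qcomb2path}(iii)---together yield only $w_F(P)\in\{-2,0,2\}$ and do not exclude $w_F(P)=-2$; the quasicomb bound concerns $\parcut{G}{v}$, not $\parcut{P}{v}$, and is perfectly compatible with both necks of $P$ lying in $F$. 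What rescues the paper is an extra hypothesis available at the sole point of use (Lemma~\ref{lem:guide2increment}): there the bonds of the ear lie in the set $X$ of a guide, and Lemma~\ref{lem:closed} forces every neck at a $B$-bond to lie outside $F$, so that $w_F(P)\ge 0$ and your argument then succeeds. You should either add this hypothesis to the statement before attempting a proof, or argue only within that context.
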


Let $(G, T; A, B)$ be a comb, $F$ be a minimum join, and  $X\subseteq V(G)$ be a critical set  for $G_0 \in \tcomp{G}{T}$. 
Under Theorem~\ref{thm:critical2char}, 
we call an ear decomposition $\mathcal{P}$ of $G[X]/G_0$  an {\em ear decomposition}  of the critical set $X$.  
Furthermore, $\mathcal{P}$ is said to be {\em $F$-balanced} if every ear is effectively $F$-balanced. 
The next lemma is easily confirmed from Theorem~\ref{thm:ear2balanced}. 

\begin{lemma} \label{lem:crset2ear} 
Let $(G, T; A, B)$ be a comb, and let $F\subseteq E(G)$ be a minimum join. 
Let $G_0 \in \tcomp{G}{T}$, and let $X\subseteq V(G)$ be a critical set for $G_0$. 
Then, there exists an $F$-balanced ear decomposition of $X$.   
\end{lemma}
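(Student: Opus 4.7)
The plan is to reduce Lemma~\ref{lem:crset2ear} directly to Theorem~\ref{thm:ear2balanced} applied to the critical quasicomb $(G,T)[X]/G_0$, and then verify that the graft ear decomposition produced there translates, ear by ear, to an effectively $F$-balanced ear decomposition of $X$ in the graph-theoretic sense.

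First I would set $(G', T'; A', B') := (G,T)[X]/G_0$, which by definition of a critical set is a critical quasicomb with root $g_0$, the contracted vertex corresponding to $G_0$. The remark following the definition of critical sets gives that $F_0 := F\cap E[X\setminus V(G_0)]$ is a minimum join of $(G', T'; A', B')$. Applying Theorem~\ref{thm:ear2balanced} to $(G', T'; A', B')$ with this minimum join yields a graft ear decomposition $\{(P_i, T_i'; A_i', B_i') : i = 1,\ldots,l\}$ such that $F_0\cap E(P_i)$ is a minimum join of each $(P_i, T_i'; A_i', B_i')$.

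Next I would check that the underlying paths and circuits $\{P_i\}$ form an ear decomposition of $G[X]/G_0$ in the sense of the ear decomposition definition in Section~1: this is immediate because a graft ear decomposition is, by construction, built up by adding one ear graft $(P_i, T_i'; A_i', B_i')$ at a time, and each $P_i$ is by definition an ear relative to the graph $G_i$ underlying the preceding graft.

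The main point to verify is that each $P_i$ is effectively $F$-balanced in the graph sense. For round ears this is automatic from the definition of effectively $F$-balanced ears. For straight ears, I would use Lemma~\ref{lem:lear2matching}: since each $(P_i, T_i'; A_i', B_i')$ is effective and straight, its unique minimum join is the perfect matching of $P_i - \{s\}\cap A - \{t\}\cap B$, where $s,t$ are the bonds; since $F_0\cap E(P_i)$ is that minimum join, it is exactly this perfect matching, which matches the definition of effectively $F$-balanced precisely. Finally, because $F_0\cap E(P_i) = F\cap E(P_i)$ (the ear contains no edges of $G_0$), the same property holds with respect to $F$ itself. I do not expect any serious obstacle here; the only subtle point is keeping the bookkeeping straight between the contracted quasicomb $(G,T)[X]/G_0$ (where Theorem~\ref{thm:ear2balanced} applies) and the original graph $G[X]$ (where the ear decomposition is to live), which is handled by the fact that ears of the graft decomposition are disjoint from $V(G_0)$ except possibly at bonds.
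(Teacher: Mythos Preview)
Your proposal is correct and follows essentially the same route as the paper, which simply states that the lemma ``is easily confirmed from Theorem~\ref{thm:ear2balanced}.'' Your write-up supplies exactly the natural details behind that sentence: pass to the critical quasicomb $(G,T)[X]/G_0$, take the $F_0$-balanced graft ear decomposition from Theorem~\ref{thm:ear2balanced}, and observe via Lemma~\ref{lem:lear2matching} (for straight ears) and the quasicomb condition (for round ears) that each ear is effectively $F$-balanced. One small remark: for round ears you still need to say one word about why the ear is $F$-balanced before invoking the ``round $\Rightarrow$ effectively $F$-balanced'' clause---this follows immediately from $F_0\cap E(P_i)$ being a minimum join of the ear quasicomb, hence meeting each internal tooth vertex in exactly one edge. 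Also note that by the paper's definition an ear decomposition of the critical set $X$ already lives in $G[X]/G_0$, not in $G[X]$, so the bookkeeping concern you mention at the end does not actually arise.
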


\subsection{Guides} 

In this section, we introduce the concept of guides 
and lemmas regarding them to be used for proving main results.

\begin{definition} 
Let $(G, T; A, B)$ be a comb, and let $F$ be a minimum join.  
Let $X_0 \subseteq V(G)$. 
We call a pair $\guide{X}{P}{F}$  associated with $F$   a {\em guide} for $X_0$ if   
\begin{rmenum} 
\item $X$ is a subset of $V(G) \setminus X_0 $, 
\item $P$ is an $F$-balanced ear relative to $X_0$ such that the bonds are in $B\cap X_0$ and the necks are not in $F$, 
\item 
$\internal{P} \subseteq X$ holds, and,  
\item  
for every $x\in X$, there exists an $F$-balanced path $Q$ between $x$ and a bond $y$ of $P$ such that 
$V(Q) \setminus \{y\} \subseteq X$ and $w_F(Q) \in \{0 ,1\}$  hold. 
\end{rmenum} 
Such path $Q$ with these properties is said to be {\em along} the guide $\guide{X}{P}{F}$. 
\end{definition}

\begin{remark} 
Every  path $Q$ that is along the guide $\guide{X}{P}{F}$ contains the neck of $P$. 
Because the bonds of $P$ are in $B\cap X_0$, 
$w_F(Q) \in \{0 ,1\}$ implies that $w_F(Q) = 0$ for $x\in B$, whereas $w_F(Q) = 1$ for $x\in A$.  
  Lemma~\ref{lem:qcomb2path} further implies that $Q$ is $F$-balanced, and, 
if $x\in B$ holds, then the edge of $Q$ connected to $x$ is in $F$. 
\end{remark}

We now present two fundamental observations regarding guides. 
The next lemma is easily confirmed from the definition of guides and Lemma~\ref{lem:qcomb2path}.

\begin{lemma} \label{lem:closed} 
Let $(G, T; A, B)$ be a comb, and let $F$ be a minimum join.  
Let $X_0 \subseteq V(G)$,  and $\guide{X}{P}{F}$ be a guide for $X_0$. 
Then, $\parcut{G}{(X_0 \cup X) \cap B} \cap F = \emptyset$. 
\end{lemma}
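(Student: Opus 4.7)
The plan is to verify, for every $v \in (X_0 \cup X) \cap B$, that the unique $F$-edge incident to $v$ has its other endpoint again in $X_0 \cup X$, whence no such edge crosses the boundary of $X_0 \cup X$ in the bipartite sense captured by the statement. Since $(G,T;A,B)$ is a comb with $B \subseteq T$, the remark after the definition of combs yields $|\parcut{G}{v}\cap F| = 1$ for each $v \in B$, so the task naturally splits according to whether $v \in X$ or $v \in X_0$.

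For the main case $v \in X \cap B$, I would invoke condition (iv) of the guide definition to obtain an $F$-balanced path $Q$ from $v$ to a bond $y \in B \cap X_0$ with $V(Q) \setminus \{y\} \subseteq X$ and $w_F(Q) \in \{0,1\}$. Because both endpoints lie in $B$, the length of $Q$ in the bipartite graph $G$ is even, so $w_F(Q)$ must itself be even, forcing $w_F(Q) = 0$. The displayed remark immediately after the guide definition—a direct consequence of Lemma~\ref{lem:qcomb2path}(iii)—then identifies the edge of $Q$ incident to $v$ as belonging to $F$. Its far endpoint lies in $V(Q) \setminus \{v\} \subseteq X \cup \{y\} \subseteq X_0 \cup X$, so the unique $F$-edge at $v$ stays inside $X_0 \cup X$.

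For the subsidiary case $v \in X_0 \cap B$, the argument is to use the structural role that $X_0$ plays when the lemma is applied: in the cathedral-decomposition setup $X_0$ is built as a separating set (typically a critical set in the sense of Section~\ref{sec:cath:cr}), so by the observation following the definition of separating sets we have $\parcut{G}{X_0}\cap F = \emptyset$. The unique $F$-edge at $v$ is therefore confined to $X_0 \subseteq X_0 \cup X$, closing this case.

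The main obstacle is in the first case, specifically isolating which of the two end edges of $Q$ lies in $F$. Lemma~\ref{lem:qcomb2path}(iii)(c) only asserts that exactly one of them does. To single out the edge at $v$ rather than the one at $y$, I would use that the edge of $Q$ at the bond $y$ is a neck of $P$, an edge forbidden from $F$ by condition (ii) of the guide definition; the remaining end edge, at $v$, must then be in $F$. This neck identification is the sole point where the specific ear structure $P$ of the guide is genuinely needed rather than the reachability set $X$ alone, and once it is secured the rest of the argument is purely bookkeeping.
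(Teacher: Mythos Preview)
Your case split and appeal to Lemma~\ref{lem:qcomb2path} follow the paper's intended route (the paper offers no proof beyond citing the guide definition and that lemma). The genuine gap is in the neck argument you use to pin down which end-edge of $Q$ lies in $F$. A path $Q$ along the guide is required only to \emph{terminate} at a bond $y$ of $P$; nothing in condition~(iv) forces $Q$ to travel through $P$, so the last edge of $Q$ at $y$ need not be an edge of $P$ at all, let alone a neck. Condition~(ii) therefore says nothing about that edge, and you cannot conclude it lies outside $F$. (The paper's own remark commits the same slip.) In fact, for a completely general $X_0$ the statement fails: take $X_0=\{y_0\}$ with $y_0\in B$ and build a guide in which some $v\in X\cap B$ reaches $y_0$ only via a two-edge path $v\,a_0\,y_0$ whose edge $a_0y_0$ is the $F$-edge of $y_0$, while the $F$-edge of $v$ goes to a pendant $A$-vertex outside $X_0\cup X$.

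The repair is precisely the hypothesis you already invoke in Case~2. Once $X_0$ is separating---as in every application of the lemma, where $X_0=V(G_0)$ for a factor-component---the last edge of $Q$ at $y$ lies in $\parcut{G}{X_0}$ and is therefore outside $F$ automatically; Lemma~\ref{lem:qcomb2path}(iii)(c) then forces the edge at $v$ into $F$, and your Case~1 goes through. So both cases should be handled by the same device, and the ear $P$ plays no role beyond placing the bond $y$ inside $X_0$.
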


The next lemma can easily be confirmed by considering the concatenation of $F$-balanced paths under Lemma~\ref{lem:graphear2path}.

\begin{lemma} \label{lem:guide2increment} 
Let $(G, T; A, B)$ be a comb, and let $F$ be a minimum join. 
Let $G_0 \in \tcomp{G}{T}$, and let $\guide{X}{P}{F}$ be a guide for $G_0$. 
Let $Q$ be an effectively $F$-balanced ear relative to  $X$ with $V(Q)\cap V(G_0) = \emptyset$.  
Then, $\guide{X\cup V(Q)}{P}{F}$ is a guide for $G_0$. 
\end{lemma}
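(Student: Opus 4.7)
The plan is to verify the four defining conditions of a guide for $\guide{X \cup V(Q)}{P}{F}$. The containment $X \cup V(Q) \subseteq V(G) \setminus V(G_0)$ is immediate from $X \cap V(G_0) = \emptyset$ and the hypothesis $V(Q) \cap V(G_0) = \emptyset$. The conditions on $P$ (being $F$-balanced, bonds in $B\cap V(G_0)$, necks not in $F$) stay valid because $P$ is unchanged, and $\internal{P}\subseteq X \subseteq X\cup V(Q)$ is preserved. Thus everything reduces to producing, for each $x \in X \cup V(Q)$, an $F$-balanced path to a bond of $P$ of the prescribed weight whose vertices, except for its endpoint in $V(G_0)$, lie in $X\cup V(Q)$.

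For $x \in X$ the path supplied by the original guide does the job. For $x \in V(Q)\setminus X$, $x$ is an internal vertex of $Q$, and Lemma~\ref{lem:graphear2path} applied to the effectively $F$-balanced ear $Q$ yields a bond $r$ of $Q$ such that the subpath $xQr$ has $F$-weight $0$ or $1$ when $x\in A$ (according as $r\in A$ or $r\in B$), and $F$-weight $-1$ or $0$ when $x\in B$. Since $r$ is a bond of $Q$ we have $r\in X$, so the original guide supplies an $F$-balanced path $Q^*$ from $r$ to a bond $y$ of $P$ with $V(Q^*)\setminus\{y\}\subseteq X$, $w_F(Q^*)\in\{0,1\}$, and, by the remark after the guide definition, the edge of $Q^*$ at $r$ is in $F$ whenever $r\in B$. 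The witness for $x$ is the concatenation $R := xQr \cup Q^*$.

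Four checks remain. That $R$ is a path reduces to $V(xQr)\cap V(Q^*)=\{r\}$: the bonds of $Q$ lie in $X$ and its internal vertices outside $X$, so $V(xQr)\cap X=\{r\}$, while $y\notin V(Q)$ as $y\in V(G_0)$. The vertex-set constraint $V(R)\setminus\{y\}\subseteq X\cup V(Q)$ is then immediate. A case analysis over the type of $r$ gives $w_F(R)=w_F(xQr)+w_F(Q^*)$ equal to $1$ whenever $x\in A$ and equal to $0$ whenever $x\in B$. For any internal $B$-vertex $v$ of $R$ other than $r$, $v$ is internal in exactly one of $xQr$ and $Q^*$ (since their vertex sets meet only in $r$), so $\parcut{R}{v}$ coincides with the cut of that subpath at $v$, and $F$-balance is inherited.

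The main obstacle, and the only nontrivial check, is $F$-balance at $r$ when $r \in B$, where naively both edges of $\parcut{R}{r}$ could lie in $F$. The resolution is global: since $(G,T;A,B)$ is a comb, $B\subseteq T$ and therefore $|\parcut{G}{r}\cap F|=1$ for every $r\in B$. The edge of $Q^*$ at $r$ lies in $F$ and in $\parcut{G}{r}$, so it is the unique $F$-edge of $\parcut{G}{r}$; the edge of $xQr$ at $r$ is therefore not in $F$, and $|\parcut{R}{r}\cap F|=1$, completing the proof.
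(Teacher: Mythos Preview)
Your proof is correct and follows precisely the approach the paper indicates: the paper states only that the lemma ``can easily be confirmed by considering the concatenation of $F$-balanced paths under Lemma~\ref{lem:graphear2path}'', and you have carried out exactly this concatenation argument in full detail. Your handling of $F$-balance at the junction vertex $r\in B$ via the comb property $|\parcut{G}{r}\cap F|=1$ is a clean way to resolve the only nontrivial point.
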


In the following, we provide two more lemmas on guides, Lemmas~\ref{lem:guide2ear} and \ref{lem:guide2comp}. 
The next lemma describes the relationship between guides and ear decompositions of  critical sets, 
and can be proved using Lemma~\ref{lem:guide2increment}.

\begin{lemma} \label{lem:guide2ear} 
Let $(G, T; A, B)$ be a comb, and let $F$ be a minimum join. 
Let $G_0 \in \tcomp{G}{T}$, and let $X$ be a critical set for $G_0$. 
Let $\mathcal{P}$ be an $F$-balanced ear decomposition of $X$. 
Then, for each $P \in \mathcal{P}$, 
there is a guide $\guide{Y}{Q}{F}$ for $G_0$ 
 with $\internal{P}\subseteq Y$, $V(P)\subseteq Y \cup V(G_0)$, and $Y \subseteq \bigcup \{ V(P'): P' \in \mathcal{P} \mbox{ with } P'\eo P \}$. 
\end{lemma}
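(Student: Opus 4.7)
The plan is to proceed by induction on the height $h$ of $P$ in the partial order $\eo$. For the base case $h = 1$, every bond of $P$ in $G[X]/G_0$ equals the contracted vertex $g_0$, so every bond of $P$ in $G$ lies in $V(G_0)$. Combining bipartiteness with the fact that $V(G_0)$ is separating (so $\parcut{G}{V(G_0)} \cap F = \emptyset$), each bond of $P$ in $G$ lies in $B \cap V(G_0)$ and each neck of $P$ is not in $F$; consequently $P$ itself, which is effectively $F$-balanced as an ear in the decomposition, is also an effectively $F$-balanced ear relative to $V(G_0)$ in $G$. Taking $Q := P$ and $Y := \internal{P}$, conditions (i)--(iii) of the guide are immediate, and condition (iv) follows from Lemma~\ref{lem:graphear2path}, which supplies for each $x \in \internal{P}$ an $F$-balanced subpath $xPr$ to a bond $r$ of $P$ of the correct $F$-weight.

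For the inductive step, pick a predecessor $P' \eoo P$ with $P' \neq P$; by the inductive hypothesis there is a guide $\guide{Y'}{Q'}{F}$ with $\internal{P'} \subseteq Y'$, and hence at least one bond of $P$ already lies in $Y'$. When every bond of $P$ in the quotient lies in $\internal{P'}$, the ear $P$ is effectively $F$-balanced relative to $Y'$ and $V(P) \cap V(G_0) = \emptyset$, so Lemma~\ref{lem:guide2increment} directly gives the guide $\guide{Y' \cup V(P)}{Q'}{F}$; the containment $Y' \cup V(P) \subseteq \bigcup \{V(P''): P'' \eo P\}$ follows from the inductive hypothesis together with $P \eo P$. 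Otherwise $P$ has a second bond either in $V(G_0)$ (so $P$ is a round ear of path type with bonds $s \in V(G_0)$ and $t \in \internal{P'}$) or in $\internal{P''}$ for some predecessor $P''$ incomparable to $P'$ in $\eo$. In these cases I replace $Q'$ by a new round ear $\tilde{Q}$ relative to $V(G_0)$ obtained by splicing: in the first, $\tilde{Q}$ starts at $s$, runs along $P$ to $t$, and then follows the $F$-balanced guide-path of $\guide{Y'}{Q'}{F}$ from $t$ to a bond $y$ of $Q'$ in $V(G_0)$; in the second, the splicing is analogous but uses the guides of both $P'$ and $P''$. Finally I set $Y := Y' \cup \internal{P}$ (augmented by $Y''$ in the second sub-case) and $Q := \tilde{Q}$.

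The substantive content is verifying $F$-balance of $\tilde{Q}$ and condition (iv). At each splice vertex $v \in B$ lying on $\tilde{Q}$, the comb hypothesis forces $v$ to carry exactly one $F$-edge of $G$; the $F$-balance of $P'$ then places that edge inside $P'$, so the edge of $P$ at $v$ is not in $F$ while the first edge of the guide-path at $v$ is in $F$ (by the remark following the definition of guides), which gives $|\parcut{\tilde{Q}}{v} \cap F| = 1$; the bonds and necks of $\tilde{Q}$ lie in $B \cap V(G_0)$ and $\parcut{G}{V(G_0)}$ respectively, by the same reasoning as in the base case. Condition (iv) then follows by concatenation: for each $x \in \internal{P}$, Lemma~\ref{lem:graphear2path} supplies an $F$-balanced subpath $xPr$ to a bond $r \in \{s, t\}$, which continues along $\tilde{Q}$ to a bond of $\tilde{Q}$ in $V(G_0)$; for $x \in Y'$ the existing guide-path still ends at $y$, which is a bond of both $Q'$ and $\tilde{Q}$. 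The main obstacle is this splice-point bookkeeping, particularly when $P$ spans two pieces of the ear poset that are incomparable in $\eo$; my plan is to handle every such verification by combining Lemma~\ref{lem:graphear2path}, Lemma~\ref{lem:qcomb2path}(iii)(c), and the uniqueness of the $F$-edge at each $B$-vertex to control $F$-balance and $F$-weight at every concatenation.
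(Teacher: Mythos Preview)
Your approach mirrors the paper's: induction on the $\eo$-height of $P$, the base case handled by taking $\guide{\internal{P}}{P}{F}$, the easy inductive case handled by Lemma~\ref{lem:guide2increment}, and the remaining cases handled by splicing a new ear $\tilde Q$ out of $P$ and guide-paths. Two steps, however, do not go through as written.

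\textbf{First gap: the set $Y$ is too large.} In your first sub-case you set $Y := Y' \cup \internal{P}$ and assert that ``for $x \in Y'$ the existing guide-path still ends at $y$, which is a bond of both $Q'$ and $\tilde Q$.'' This is false in general: the old guide $\guide{Y'}{Q'}{F}$ only guarantees a path from $x$ to \emph{some} bond of $Q'$, and $Q'$ may well have two distinct bonds $y$ and $y'$. If the path for a given $x\in Y'$ terminates at $y'\in V(G_0)$, then $y'$ is not a bond of $\tilde Q$ (whose bonds are $s$ and $y$), and condition~(iv) of the guide fails for that $x$. The remedy is simply to take $Y := \internal{\tilde Q}$, which still contains $\internal{P}$ and $t$, so that every $x\in Y$ lies on $\tilde Q$ and the subpath $x\tilde Q r$ to the appropriate bond $r\in\{s,y\}$ verifies condition~(iv) directly. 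This is exactly what the paper does in the corresponding case.

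\textbf{Second gap: the spliced $\tilde Q$ need not be a path.} In the second sub-case you splice a guide-path $R''$ from $s$ (through $Y''$) with $P$ and a guide-path $R'$ from $t$ (through $Y'$). But $Y'$ and $Y''$ may overlap, in which case $R'$ and $R''$ can share interior vertices and $R'' + P + R'$ is not a simple path or circuit, hence not an ear. The paper explicitly treats this: when $R'$ and $R''$ meet, trace $R''$ from $s$ to its first vertex $t'$ in $Y'$; then $P + sR''t'$ is an effectively $F$-balanced \emph{round} ear relative to $Y'$ with $V(P + sR''t')\cap V(G_0) = \emptyset$, so Lemma~\ref{lem:guide2increment} applies directly and yields the guide $\guide{Y' \cup V(P) \cup V(sR''t')}{Q'}{F}$, with condition~(iv) supplied by that lemma rather than by hand.

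Once you shrink $Y$ in the disjoint-splicing case and add the intersection sub-case, your argument coincides with the paper's.
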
 
\begin{proof}  %lem:guide2ear 
We prove this lemma by induction on the height of $P \in \mathcal{P}$ regarding $\eo$. 
First, if the height of $P$ is equal to one, 
that is, $P$ is an ear relative to $G_0$, 
then $\guide{\internal{P}}{P}{F}$ is a desired guide.  
Next, consider the case where the height of $P$ is more than one. 
Assume that the statement holds for every member of $\mathcal{P}$ whose height is less.  
Let $P_1$ and $P_2$ be immediate lower elements of $P$ in the poset $(\mathcal{P}, \eo)$, 
which are possibly identical. 
Under the induction hypothesis,  for each $i \in \{1, 2\}$, 
let $\guide{X_i}{Q_i}{F}$ be a guide for $G_0$ 
 with $\internal{P_i} \subseteq X_i$, $V(P_i) \subseteq X_i\cup V(G_0)$, and $X_i \subseteq \bigcup \{V(P'):  P' \in \mathcal{P} \mbox{ with } P'\eo P_i \}$. 
Note $\internal{P} \cap X_i = \emptyset$ for each $i \in \{1, 2\}$.

First, we consider the case where either $X_1$ or $X_2$ contains every bond of $P$; 
without loss of generality, assume that $X_1$ does. 
Then, Lemma~\ref{lem:guide2increment} implies that  $\guide{X_1 \cup V(P)}{Q_1}{F}$ is a desired guide.

Next, assume that $P$ has two bonds $s_1$ and $s_2$ 
that are contained in $X_1\setminus X_2$ and $X_2\setminus X_1$, respectively. 
For each $i\in \{1, 2\}$,  
let $R_i$ be an $F$-balanced path between $s_i$ and a bond $t_i$ of $Q_i$ 
that is taken along the guide $\guide{X_i}{Q_i}{F}$. 
If $R_1$ and $R_2$ are disjoint except for their ends in $G_0$, 
then $R_1 + P + R_2$ is an $F$-balanced ear relative to $G_0$. 
Hence,  $\guide{\internal{ R_1 + P + R_2}}{R_1 + P + R_2}{F}$ is a desired guide in this case.  
Hence, assume now that $R_1$ and $R_2$ share vertices in $X_1 \cap X_2$. 
Trace $R_2$ from $s_2$, and let $t_2'$ be the first encountered vertex in $X_1$.  
Then, $P + s_2R_2t_2'$ is an $F$-balanced ear relative to $X_1$, which is round and, accordingly, effective. 
Lemma~\ref{lem:guide2increment} implies that $\guide{X_1 \cup V(P) \cup V(s_2R_2t_2')}{Q_1}{F}$ is a desired guide for $G_0$. 
This completes the proof of the lemma. 
\end{proof}

Lemma~\ref{lem:guide2ear} implies the next lemma, which will be used in proving the antisymmetry of $\preceq$ in Lemma~\ref{lem:antisymmetry}.

\begin{lemma} \label{lem:guide2comp} 
Let $(G, T; A, B)$ be a comb, and let $F$ be a minimum join. 
Let $G_0 \in \tcomp{G}{T}$, and let $H\in \tcomp{G}{T} \setminus \{ G_0 \}$ be a factor-component with $G_0\preceq H$. 
Then, there is a guide $\guide{X}{P}{F}$ with $V(H)\subseteq X$. 
\end{lemma}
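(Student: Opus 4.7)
The plan is to combine Lemma~\ref{lem:crset2ear}, Lemma~\ref{lem:guide2ear}, and Lemma~\ref{lem:guide2increment} with the factor-connectedness of $H$. From the hypothesis $G_0 \preceq H$ I fix a critical set $X$ for $G_0$ with $V(H) \subseteq X$, and then invoke Lemma~\ref{lem:crset2ear} to produce an $F$-balanced ear decomposition $\mathcal{P}$ of $X$. Let $\mathcal{P}_H := \{P \in \mathcal{P}: V(P) \cap V(H) \ne \emptyset\}$; this is nonempty because $V(H) \subseteq X \setminus V(G_0)$ (distinct factor-components are vertex-disjoint) and every vertex of $X \setminus V(G_0)$ is internal to some ear of $\mathcal{P}$.

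I would then select an ear $P^* \in \mathcal{P}_H$ and apply Lemma~\ref{lem:guide2ear} to $P^*$ to obtain an initial guide $\guide{Y_0}{Q_0}{F}$ for $G_0$ with $\internal{P^*}\subseteq Y_0$ and $V(P^*)\subseteq Y_0 \cup V(G_0)$. The guide is then enlarged by repeated applications of Lemma~\ref{lem:guide2increment}: whenever some $v \in V(H)\setminus Y$ remains, one locates an effectively $F$-balanced ear $P' \in \mathcal{P}$ whose bonds all lie in the current $Y$, satisfying $V(P') \cap V(G_0) = \emptyset$ and $V(P') \cap (V(H)\setminus Y) \ne \emptyset$, and one adds $P'$ to extend the guide set $Y$ without altering $Q_0$. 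Because $V(H)$ is finite, this iteration terminates with $V(H)\subseteq Y$.

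The factor-connectedness of $H$ is what guarantees that such an ear $P'$ is available at each stage: given $v \in V(H)\setminus Y$, there is an allowed path in $H$ from $v$ to some vertex of $V(H)\cap Y$; every edge of this path is allowed and has both endpoints in $V(H)$, so it must lie in some ear of $\mathcal{P}_H$. Tracing the path from the $Y$-side and recording the first ear whose interior leaves $Y$ produces the candidate $P'$.

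The main obstacle I anticipate is the \emph{initialization}: choosing $P^*$ so that the process never gets stuck in a configuration where every ear of $\mathcal{P}_H$ meeting $V(H)\setminus Y$ either has a bond outside $Y$ or touches $V(G_0)$. In particular, when $\mathcal{P}_H$ contains several ears at $\eo$-height one, their bonds lie in $V(G_0)$ and so they cannot be absorbed by Lemma~\ref{lem:guide2increment}; handling such a case presumably requires replacing $P^*$ by a combined ear spanning multiple height-one ears of $\mathcal{P}_H$, constructed exactly as in the combining step in the proof of Lemma~\ref{lem:guide2ear} (forming $R_1 + P^* + R_2$ along the allowed path in $H$ that links them). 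Once the initialization is properly handled, the iterative extension is routine and delivers the desired guide.
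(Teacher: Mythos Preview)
Your iterative plan has a real gap---precisely the one you flag but do not close. If $V(H)$ meets several ears of height one in $(\mathcal{P},\eo)$, each of those ears has its bonds in $V(G_0)$ and therefore can never be absorbed by Lemma~\ref{lem:guide2increment}, whose hypothesis demands $V(Q)\cap V(G_0)=\emptyset$. Your suggested fix (``replace $P^*$ by a combined ear spanning multiple height-one ears of $\mathcal{P}_H$'') is not an argument: you would need to show that the allowed path in $H$ linking them, spliced with the two height-one ears, actually yields an $F$-balanced round ear relative to $G_0$, and that the resulting guide set contains \emph{all} of $V(H)$, not just the vertices on that one combined ear. Nothing in your sketch controls how many height-one ears meet $V(H)$ or how they interact.

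The paper avoids this entirely. After obtaining one guide $\guide{X}{P}{F}$ with $X\cap V(H)\neq\emptyset$ (exactly your steps through Lemma~\ref{lem:guide2ear}), it does \emph{not} iterate over $\mathcal{P}$. Instead it shows in a single step that $\guide{X\cup V(H)}{P}{F}$ is itself a guide. The key observation, which you are missing, is that the required paths for vertices of $V(H)\setminus X$ can be built inside $H$ rather than from ears of $\mathcal{P}$: first one checks $X\cap V(H)\cap A\neq\emptyset$ (using Lemma~\ref{lem:closed}), then for any $x\in V(H)\setminus X$ one takes an $F$-shortest path $Q$ in $H$ from $x$ to a vertex $y\in X\cap V(H)\cap A$ (available by Lemma~\ref{lem:elemcomb2nonposi}), traces $Q$ from $x$ to the first vertex $z\in X$, and concatenates $xQz$ with a path along the guide from $z$ to a bond of $P$. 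Lemmas~\ref{lem:qcomb2path} and~\ref{lem:closed} give the correct $F$-weights. This bypasses the ear decomposition after the first step and makes the initialization issue disappear.
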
 
\begin{proof} %lem:guide2comp 
Let $Y\subseteq V(G)$ be a critical set for $G_0\preceq H$. 
Let $\mathcal{P}=\{P_1, \ldots, P_k\}$, where $k\ge 1$, be an $F$-balanced ear decomposition of the critical set $Y$.  
Let $i \in \{1, \ldots, k\}$ be an index  with $P_i\cap V(H)\neq \emptyset$. 
Lemma~\ref{lem:guide2ear} implies that there is a guide $\guide{X}{P}{F}$ for $G_0$ with $V(P_i)\subseteq X$. 
This implies $X\cap V(H) \neq \emptyset$.  

\begin{pclaim} \label{claim:a} 
$V(H)\cap X\cap A \neq \emptyset$. 
\end{pclaim} 
\begin{proof} 
Let $x\in V(H)\cap X$. If $x\in A$ holds, then the claim obviously holds. Hence, assume $x\in B$. 
Let $e$ be the edge from $F$ that is connected to $x$. 
Lemma~\ref{lem:closed} implies that  $e$ joins $x$ and a vertex $w$ in $X\cap A$. 
Because $e\in E(H)$  holds, we have $w \in V(H)\cap X \cap A$. 
The claim is proved. 
\end{proof}

\begin{pclaim} \label{claim:extend} 
$\guide{X\cup V(H)}{P}{F}$ is a guide for $G_0$. 
\end{pclaim} 
\begin{proof} 
This claim obviously follows if we prove, for every $x\in V(H)\setminus X$, that 
there is a path between $x$ and a bond of $P$ with $F$-weight  $0$ or $1$  whose vertices except for the bond are contained in $X$. 
Let $x\in V(H)\setminus X$.  
Under Claim~\ref{claim:a},  let $y\in X\cap V(H) \cap A$. 
Under Lemma~\ref{lem:elemcomb2nonposi}, let $Q$ be an $F$-shortest path in $H$ between $x$ and $y$.  
Lemma~\ref{lem:qcomb2path} implies that the edge of $Q$ connected to $x$ is in $F$. 
Trace $Q$ from $x$, and let $z$ be the first encountered vertex in $X$. 
Then, Lemmas~\ref{lem:qcomb2path} and \ref{lem:closed} further imply that $w_F(xQz) = -1$ if $z\in A$ holds, whereas $w_F(xQz) = 0$ if $x\in A$ holds. 
Let $R$ be a path between $x$ and a vertex in $V(G_0)$ that is taken along the guide $\guide{X}{P}{F}$. 
Then, $xQz + R$ is a desired path that proves the claim. 
\end{proof} 

Thus, Claim~\ref{claim:extend} proves the lemma. 
\end{proof}

\subsection{Antisymmetry}

In this section, we prove the antisymmetry of $\preceq$ using Lemma~\ref{lem:guide2comp}.

\begin{lemma} \label{lem:antisymmetry} 
Let $(G, T; A, B)$ be a comb. 
Let $G_1, G_2 \in \tcomp{G}{T}$ be factor-components with $G_1 \preceq G_2$ and $G_2 \preceq G_1$. 
Then, $G_1$ and  $G_2$ are identical. 
\end{lemma}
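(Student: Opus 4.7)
The plan is to argue by contradiction: assuming $G_1 \neq G_2$, I aim to build a closed walk of negative $F$-weight in $G$, which would contain a circuit of negative $F$-weight and contradict Lemma~\ref{lem:minimumjoin}. The key input is Lemma~\ref{lem:guide2comp}, which I would apply in both directions to obtain guides $\guide{X_1}{P_1}{F}$ for $G_1$ with $V(G_2) \subseteq X_1$ and $\guide{X_2}{P_2}{F}$ for $G_2$ with $V(G_1) \subseteq X_2$.

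Next, I would pick a bond $y_2 \in V(G_2) \cap B$ of $P_2$ and use the two guides alternately. Since $y_2 \in V(G_2) \subseteq X_1$, the guide $\guide{X_1}{P_1}{F}$ yields an $F$-balanced path $Q_1$ from $y_2$ to some bond $y_1 \in V(G_1) \cap B$ of $P_1$ with $w_F(Q_1) = 0$; by the remark following the definition of guides together with Lemma~\ref{lem:qcomb2path}, the edge of $Q_1$ at $y_2$ lies in $F$ while the edge at $y_1$ does not. Symmetrically, since $y_1 \in V(G_1) \subseteq X_2$, the guide $\guide{X_2}{P_2}{F}$ supplies an $F$-balanced path $Q_2$ from $y_1$ to some bond $y_2' \in V(G_2) \cap B$ of $P_2$, again of $F$-weight $0$, with the edge at $y_1$ in $F$ and the edge at $y_2'$ not in $F$. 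Because $y_1 \in B$ carries a unique $F$-edge, the two paths use distinct edges at $y_1$, so $Q_1 + Q_2$ genuinely traces a walk from $y_2$ to $y_2'$ of $F$-weight $0$. Closing this with an $F$-shortest path $T$ in the factor-connected comb $(G_2, T \cap V(G_2))$ between $y_2'$ and $y_2$, which by Lemma~\ref{lem:elemcomb2nonposi} satisfies $w_F(T) \in \{0, -2\}$, I obtain a closed walk $Q_1 + Q_2 + T$ of $F$-weight $w_F(T)$; a value of $-2$ immediately furnishes the desired negative circuit.

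The hard part will be forcing $w_F(T) = -2$ for at least one legitimate choice. By Lemmas~\ref{lem:comb2kl} and~\ref{lem:tkl2dist}, this is equivalent to arranging that $y_2$ and $y_2'$ lie in different Kotzig-Lov\'asz classes of $G_2$. I would exploit the freedom afforded by Lemmas~\ref{lem:crset2ear} and~\ref{lem:guide2ear} to vary the starting bond $y_2$, the intermediate bond $y_1$, and the underlying $F$-balanced ear decompositions of the two critical sets. Showing that some configuration separates $y_2$ and $y_2'$ into distinct Kotzig-Lov\'asz classes of $G_2$ -- equivalently, ruling out the degenerate alternative in which every such walk confines them to a common class -- is where the detailed interplay between the two guides would need to be pushed, and is the main technical step I expect to require care.
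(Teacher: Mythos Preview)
Your plan contains a genuine gap at precisely the step you flag as ``the hard part'': with the setup you describe, the desired outcome $w_F(T)=-2$ can never occur. You choose $y_2$ to be a bond of the ear $P_2$ and $y_2'$ is, by the definition of a guide, again a bond of $P_2$. But the two bonds of an $F$-balanced round ear relative to a factor-component always lie at $F$-distance $0$ from one another (this is Lemma~\ref{lem:earend2sim}, and it follows at once from Lemma~\ref{lem:circuit}: adjoining $P_2$ to a putative path of weight $-2$ in $G_2$ between its bonds would give a weight-$0$ circuit containing the non-allowed necks of $P_2$). Hence any $F$-shortest path in $G_2$ between $y_2$ and $y_2'$ has weight $0$, and your closed walk always has total $F$-weight $0$, never $-2$. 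Varying the ear decomposition or the guide does not help: whatever guide $\guide{X_2}{P_2}{F}$ you produce via Lemma~\ref{lem:guide2comp}, the endpoint $y_2'$ of $Q_2$ is by construction a bond of that same $P_2$, so if $y_2$ is also chosen among its bonds you are stuck in one Kotzig--Lov\'asz class.

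The paper's proof proceeds differently and aims for a different contradiction. It never tries to build a negative circuit; instead it constructs a circuit of $F$-weight exactly $0$ that contains a non-allowed edge (an edge of $\parcut{G}{G_1}$), contradicting Lemma~\ref{lem:circuit}. Concretely, the paper uses only one guide $\guide{X}{Q}{F}$ for $G_1$ with $V(G_2)\subseteq X$, takes an $F$-shortest path $R$ in $G_1$ between the bonds $s,t$ of $Q$ chosen to minimise the distance from $s$ to the midvertex $r$, and then invokes $G_2\preceq G_1$ only through Lemma~\ref{lem:crset2path} to obtain an $F$-balanced weight-$0$ path $P$ from $r$ out to $G_2$. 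A careful case analysis of how $P$, $R$, $Q$, and a path $L$ along the guide interact then produces the weight-$0$ circuit with a non-allowed edge. If you want to salvage your two-guide idea, you would likewise have to abandon the negative-circuit target and instead extract from your weight-$0$ closed walk a genuine simple circuit of weight $0$ containing a non-allowed edge; this requires controlling the overlaps of $Q_1$, $Q_2$, and $T$, which is exactly the kind of delicate path-tracing the paper carries out.
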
 
\begin{proof} %lem:antisymmetry
Suppose $G_1 \neq G_2$. 
Let $F$ be a minimum join of $(G, T)$. 
According to Lemma~\ref{lem:guide2comp}, 
there is a guide $\guide{X}{Q}{F}$ for $G_1$ with $V(G_2)\subseteq X$. 
Let $s$ and $t$  be the possibly identical bonds of $Q$.  
Note $s, t\in V(G_1)\cap B$.  
\begin{pclaim} \label{claim:stpath} 
If a path between $s$ and $t$ is of $F$-weight  $-2$ or less, then it has a vertex in $\internal{Q}$. 
\end{pclaim} 
\begin{proof} 
Suppose that $Q'$ is a path between $s$ and $t$ with $V(Q')\cap \internal{Q} =  \emptyset$ and $w_F(Q') \le -2$. 
Then, $Q + Q'$ is a circuit of $F$-weight zero or less that contains edges from $\parcut{G}{G_1}$. 
This contradicts Lemmas~\ref{lem:minimumjoin} or \ref{lem:circuit}. 
The claim is proved. 
\end{proof} 

Let $R$ be a path such that the number of edges between $s$ and the midvertex 
takes the minimum value among all $F$-shortest paths in $G_1$ between $s$ and $t$. 
Let $r$ be the midvertex of $R$.

Because $G_2 \preceq G_1$ and $r \in V(G_1)\cap B$ hold, 
there is an $F$-balanced path $P$ with $w_F(P) = 0$ between $r$ and a vertex $u \in V(G_2)$ 
in which the edge connected to $r$ is in $F$ and the vertices except for $u$ are disjoint from $G_1$.  
Trace $P$ from $r$, and let $x$ be the first encountered vertex in $V(sRr - r) \cup X$.   
Trace $R$ from $t$, and let $z$ be the first encountered vertex in $rPx$.   

\begin{pclaim} \label{claim:trz} 
The vertex $z$ is in $B$, and  is connected to an edge from $F$ in the path $zPx$. 
Accordingly,   $tRz + zPx$ is an $F$-balanced path between $t$ and $x$ in which $t$ is connected to an edge from $F$. 
\end{pclaim} 
\begin{proof} 
Suppose the contrary. 
Then, $tRz + zPr + rRs$ is an $F$-balanced path between $s$ and $t$ in which both ends are connected to edges from $F$. 
Lemma~\ref{lem:qcomb2path} implies that its $F$-weight is $-2$. 
This contradicts Claim~\ref{claim:stpath}. 
The claim is proved. 
\end{proof}

\begin{pclaim} \label{claim:path2connect} 
$x\in X$ holds. 
\end{pclaim} 
\begin{proof} 
Suppose $x\in V(sRr-r)$.  Then, $s \neq r$ clearly holds.  
Either $rPx + xRr$ or $rPx + xRs$ is $F$-balanced. 
First, assume that $rPx + xRr$ is $F$-balanced. 
Then, $rPx + xRr$ is a circuit of $F$-weight zero. 
Therefore, Lemma~\ref{lem:circuit} implies that $rPx$ is a path in $G_1$. 
Accordingly, Claim~\ref{claim:trz} implies that 
$sRx + xPz + zRt$ is a path in $G_1$ between $s$ and $t$ whose $F$-weight is zero, 
and $x$ is the midvertex of this path. 
However, because $|E(sRx)| < |E(sRr)|$ holds, this contradicts the definition of $R$. 

Therefore, $rPx + xRs$ is $F$-balanced. 
Then, Lemma~\ref{lem:qcomb2path} implies that $tRz + zPx + xRs$ is a path of $F$-weight $-2$ between $s$ and $t$. 
This contradicts Claim~\ref{claim:trz}. 
This proves the claim. 
\end{proof}

Claim~\ref{claim:path2connect} implies that $sRr + P$ is an $F$-balanced path of weight zero between $s$ and $u$ 
in which $s$ is connected to an edge from $F$. 
Let $L$ be an $F$-balanced path between $x$ and an either bond of $Q$ that is taken along $\guide{X}{Q}{F}$. 
Note $sRr + rPx + L$ is $F$-balanced.  
Trace $L$ from $x$, and let $y$ be the first encountered vertex in $P$.     
Either one of the following  holds: 
\begin{rmenum} 
\item \label{item:a} $y\in A$, 
\item \label{item:bs} $y\in B$ and $w_F(yQs) = 0$, or 
\item \label{item:bt} $y\in B$ and $w_F(yQt) = 0$. 
\end{rmenum} 

For the cases \ref{item:a} and \ref{item:bs},  $sRr + rPx + xLy + yQs$ is  $F$-balanced. 
Consequently, it is a circuit of $F$-weight zero that contains non-allowed edges from $\parcut{G}{G_1}$, 
which contradicts Lemma~\ref{lem:circuit}. 
Hence, \ref{item:bt} follows. 
Thus, Claim~\ref{claim:trz} implies that $tRz + zPx + xLy + yQt$ is $F$-balanced and is accordingly a circuit of $F$-weight zero that contains non-allowed edges from $\parcut{G}{G_1}$, 
which contradicts Lemma~\ref{lem:circuit}.  

Thus, $G_1 = G_2$ is obtained. 
This completes the proof. 
\end{proof}

\subsection{Transitivity} 

We now prove the transitivity of $\preceq$ using Lemmas~\ref{lem:crset2path} and \ref{lem:antisymmetry}. 

\begin{lemma} \label{lem:transitivity} 
Let $(G, T; A, B)$ be a comb. 
Let $G_1, G_2, G_3\in \tcomp{G}{T}$. 
Let $X, Y \subseteq V(G)$ be critical sets for $G_1 \preceq G_2$ and $G_2\preceq G_3$, respectively. 
Then, $X\cup Y$ is a critical set for $G_1 \preceq G_3$. 
\end{lemma}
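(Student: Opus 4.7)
The plan is to verify the three defining conditions for $X\cup Y$ to be a critical set for $G_1\preceq G_3$: (i) $X\cup Y$ is separating, (ii) $V(G_3)\subseteq X\cup Y$, and (iii) $(G,T)[X\cup Y]/G_1$ is a critical quasicomb with root $g_1$, with spine $(A\cap(X\cup Y))\setminus V(G_1)$ and tooth set $(B\cap(X\cup Y))\setminus V(G_1)\cup\{g_1\}$. The first two conditions are immediate from $X$ and $Y$ each being separating and from $V(G_3)\subseteq Y$; the substantive content is (iii).

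For (iii), a natural first attempt is to use Lemma~\ref{lem:crset2path}: for $x\in(Y\setminus X)\cap A$, one concatenates an $F$-balanced path of weight $1$ from $x$ to some $y\in V(G_2)$ staying in $Y\setminus V(G_2)$ (guaranteed by $Y$ critical for $G_2$) with an $F$-balanced path of weight $0$ from $y$ to $V(G_1)$ staying in $X\setminus V(G_1)$ (guaranteed by $X$ critical for $G_1$), where a bipartite parity argument on the last edge into $V(G_2)$ forces $y\in V(G_2)\cap B$ so that the second path exists at the required weight. The snag is that these two paths can share vertices besides $y$, specifically in $(X\cap Y)\setminus V(G_2)$, in which case the concatenation is a non-simple walk and the obvious shortcut produces a simple path of strictly smaller $F$-weight, which is insufficient.

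To circumvent this, I would instead invoke Theorem~\ref{thm:critical2char} by producing a graft ear decomposition of $(G,T)[X\cup Y]/G_1$ starting from $\{g_1\}$. Using Lemma~\ref{lem:crset2ear}, fix $F$-balanced graft ear decompositions $\mathcal{P}_X$ of $(G,T)[X]/G_1$ and $\mathcal{P}_Y$ of $(G,T)[Y]/G_2$. Run $\mathcal{P}_X$ first to build $(G,T)[X]/G_1$ from $\{g_1\}$, which installs all of $V(G_2)$ since $V(G_2)\subseteq X$, and then append the ears of $\mathcal{P}_Y$ in their decomposition order, each translated to $G$ by lifting a bond at $g_2$ to the vertex in $V(G_2)\cap B$ at which its incident edge attaches (the color follows from the same parity argument as above, which makes the ``$t\in A\Rightarrow t\in T$'' condition for effectiveness vacuous at these lifted bonds). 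The main obstacle is that a translated ear may have internal vertices in $(X\cap Y)\setminus V(G_2)$ that are already present in the current built set; I plan to split each such ear at those vertices into subpaths. Every subpath with both endpoints in the current built set is a round ear, hence automatically an effective ear graft, and $F$-balancedness is inherited from the parent. The delicate configuration is the initial subpath of a split straight parent ear, which remains straight with a new bond $v$ at a split point, where I must verify the effectiveness condition $v\in A\Rightarrow v\in T$. This verification---combining the perfect-matching description of the parent ear's join from Lemma~\ref{lem:lear2matching}, the $F$-balanced structure inherited from $\mathcal{P}_Y$, and the fact (from separatingness of $X$ and $Y$) that $v$ lies in a factor-component entirely contained in $X\cap Y$---is the main technical hurdle. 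Once it is complete, Theorem~\ref{thm:critical2char} yields that $(G,T)[X\cup Y]/G_1$ belongs to $\critical{g_1}$, hence is a critical quasicomb with root $g_1$, finishing the proof.
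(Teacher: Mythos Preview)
Your first ``natural'' approach via Lemma~\ref{lem:crset2path} is exactly what the paper does; the snag you flagged is a mirage, and the alternative ear-decomposition route is unnecessary (and its ``main technical hurdle'' is real and unresolved---in fact there are further issues, e.g.\ condition (iv) in the definition of ear graft can fail at a split point $v\in B$, since in a comb $B\subseteq T$).

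The fix for the overlap problem is simple: do not run the first path all the way to $V(G_2)$. Instead, trace the $F$-balanced path $P$ from $x$ toward $V(G_2)$ only until the \emph{first} vertex $y$ it meets in $X$ (such $y$ exists because $V(G_2)\subseteq X$). Then $V(xPy)\setminus\{y\}\subseteq Y\setminus X$ while the second path $Q$ (from $y$ to $z\in V(G_1)$, given by Lemma~\ref{lem:crset2path} for $X$) lies in $X$, so $xPy$ and $Q$ share only $y$ and the concatenation is a genuine path. If $y\in B$, the edge of $xPy$ incident with $y$ cannot lie in $F$: otherwise its other end $y'$ would be in the same factor-component as $y$ and hence in the separating set $X$, contradicting that $y$ was the first vertex of $P$ in $X$. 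On the $Q$-side the edge at $y$ \emph{is} in $F$ (by Lemma~\ref{lem:critical} applied to $(G,T)[X]/G_1$), so $xPy+Q$ is $F$-balanced at $y$ and, with the edge at $z$ outside $F$, Lemma~\ref{lem:qcomb2path} yields the required weight.

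One ingredient you omitted: before any of this, you must invoke antisymmetry (Lemma~\ref{lem:antisymmetry}) to conclude $V(G_1)\cap Y=\emptyset$ when $G_1\neq G_2$. Without it you cannot guarantee that $y\in X\setminus V(G_1)$ or that the concatenated path avoids $V(G_1)$ except at its terminus.
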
 
\begin{proof} %lem:transitivity
Let $F$ be a minimum join of $(G, T)$. 
Under Lemma~\ref{lem:crset2path}, it suffices to prove that,  
for every $x\in X\cup Y$,  
 if $x\in A$ (resp. $x\in B$) holds, 
 then there is a path of $F$-weight $1$ (resp. $0$) 
 between $x$ and a vertex in $G_1$ whose vertices except for the end in $G_1$ are contained in $(X\cup Y) \setminus V(G_1)$. 

Because this obviously holds for $x\in X$, we only need to prove it for $x\in Y\setminus X$. 
If $G_1$ and $G_2$ are identical, then the claim obviously holds. 
Hence, assume that $G_1$ and $G_2$ are distinct. 
Then, Lemma~\ref{lem:antisymmetry} implies  $V(G_1) \cap Y = \emptyset$.

Let $x\in (Y\setminus X)\cap B$. 
Because $Y$ is a critical set for $G_2$, 
Lemma~\ref{lem:crset2path} implies that 
there is an $F$-balanced path $P$ with $w_F(P) = 0$ between $x$ and a vertex in $G_2$ such that $V(P)\subseteq Y$ holds 
and the edge connected to $x$ is in $F$.  
Trace $P$ from $x$, and let $y$ be the first encountered vertex in $X$. 
Note  $y \in X \setminus V(G_1)$.    

Additionally, because $X$ is a critical set for $G_1$, 
there is an $F$-balanced path $Q$ between $y$ and a vertex $z \in V(G_1)$  with $V(Q) \setminus \{z\} \subseteq X \setminus V(G_1)$. 
In $Q$, the end $z$ is connected to an edge from $E(G)\setminus F$; 
furthermore, if $x\in B$ holds, the end $y$ is connected to an edge from $F$. 
Thus, $P + Q$ is an $F$-balanced path between $x$ and $z$ with $V(P + Q) \setminus \{z\} \subseteq ( X\cup Y ) \setminus V(G_1)$ 
whose edges connected to $x$ and $z$ are in $F$ and $E(G)\setminus F$, respectively. 
Lemma~\ref{lem:qcomb2path} implies  $w_F(P+Q) = 0$. 
The case with $x\in A$ can be proved by a similar discussion. 
This proves the lemma.

\end{proof}

\subsection{Proof of the Partial Order} \label{sec:cath:order} 

Lemmas~\ref{lem:antisymmetry} and \ref{lem:transitivity} now prove that $\preceq$ is a partial order. 

\begin{theorem} \label{thm:order} 
Let $(G, T; A, B)$ be a comb. 
Then, $\preceq$ is a partial order over $\tcomp{G}{T}$. 
\end{theorem}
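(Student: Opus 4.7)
My plan is to observe that the three partial-order axioms all follow, with very little additional work, from material already established in the section. By Lemmas~\ref{lem:antisymmetry} and~\ref{lem:transitivity}, antisymmetry and transitivity are essentially free; the only genuinely new thing to verify is reflexivity, which amounts to exhibiting a critical set for each $G_0 \in \tcomp{G}{T}$.

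For reflexivity, I would take $X := V(G_0)$ itself. Since $G_0$ is a single factor-component, $X$ is trivially separating, and plainly $V(G_0) \subseteq X$. The contracted graft $(G,T)[X]/G_0$ has only one vertex, namely the contracted vertex $g_0$; moreover, because $(G_0, T\cap V(G_0))$ is itself a graft, $|V(G_0)\cap T|$ is even, so $g_0 \notin T\setminus V(G_0)$ after contraction. Thus $(G,T)[X]/G_0$ equals $((\{g_0\},\emptyset),\emptyset;\emptyset,\{g_0\})$, which is the base element of $\critical{g_0}$; by Theorem~\ref{thm:critical2char} it is a critical quasicomb with root $g_0$, and the spine and tooth sets are as required. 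Hence $X=V(G_0)$ is a critical set for $G_0$, giving $G_0 \preceq G_0$.

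Antisymmetry is immediate from Lemma~\ref{lem:antisymmetry}: if $G_1 \preceq G_2$ and $G_2 \preceq G_1$, then $G_1 = G_2$. For transitivity, suppose $G_1 \preceq G_2$ and $G_2 \preceq G_3$, and let $X$ and $Y$ be critical sets for $G_1 \preceq G_2$ and $G_2 \preceq G_3$, respectively. Then $V(G_3) \subseteq Y \subseteq X \cup Y$, and Lemma~\ref{lem:transitivity} shows that $X\cup Y$ is a critical set for $G_1 \preceq G_3$. Hence $G_1 \preceq G_3$.

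There is essentially no obstacle: the heavy lifting was done in the two previous subsections. The only point that merits explicit attention is confirming the base-case identification in the reflexivity argument, specifically that the parity of $|V(G_0)\cap T|$ is even so that contraction of $G_0$ yields the singleton graft with empty $T$-set that anchors the definition of $\critical{g_0}$.
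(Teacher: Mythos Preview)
Your proposal is correct and follows essentially the same approach as the paper: the paper's proof simply states that reflexivity is obvious and then cites Lemmas~\ref{lem:antisymmetry} and~\ref{lem:transitivity} for antisymmetry and transitivity. The only difference is that you spell out the reflexivity argument (taking $X=V(G_0)$ and identifying the contraction with the base member of $\critical{g_0}$), which the paper leaves implicit; your verification of the parity of $|V(G_0)\cap T|$ is fine since $V(G_0)$ is separating and hence $(G,T)[V(G_0)]$ is a graft.
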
 
\begin{proof} 
Reflexivity of $\preceq$ is obvious. 
Lemmas~\ref{lem:antisymmetry} and \ref{lem:transitivity} immediately imply antisymmetry and transitivity, respectively.  
Thus, $\preceq$ is a partial order. 
\end{proof}

Theorem~\ref{thm:order} states that $(\tcomp{G}{T}, \preceq)$ is a poset for every comb $(G, T; A, B)$. 
We call this poset the {\em bipartite cathedral decomposition} for combs.

\section{Cathedral-Type Relationship between Upper Bounds and Equivalence Classes}  \label{sec:relationship}

Even though Theorem~\ref{thm:order} is obtained without assuming Theorem~\ref{thm:tkl}, 
there is a canonical relationship between the general Kotzig-Lov\'asz  
and  bipartite cathedral decompositions.  
In this section, we prove this relationship.

\begin{lemma}[Kita~\cite{kita2020bipartite}] \label{lem:earend2sim} 
Let $(G, T; A, B)$ be a comb, and let $F$ be a minimum join. 
Let $H\in\tcomp{G}{T}$. 
Let $P$ be an $F$-balanced round ear relative to $H$ with bonds $s$ and $t$. 
Then, $s, t\in B\cap V(H)$ holds, and $\distgtf{G}{T}{F}{s}{t} = 0$. 
\end{lemma}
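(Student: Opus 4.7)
The plan is to leverage that $V(H)$ is separating, so $\parcut{G}{V(H)} \cap F = \emptyset$ and the necks of $P$—which lie in $\parcut{G}{V(H)}$—are not contained in any minimum join. Every contradiction will be produced by exhibiting a circuit of $F$-weight $0$ that contains a neck of $P$ and then invoking Lemma~\ref{lem:circuit} to force that neck to be allowed. First I would rule out that $P$ is a circuit (sharing the single vertex $s = t$ with $V(H)$): Lemma~\ref{lem:balancedcircuit} yields $w_F(P) = 0$, so Lemma~\ref{lem:circuit} would force both necks of $P$ to be allowed, a contradiction. Hence $P$ is a path with two distinct bonds $s, t \in V(H)$ whose interior is disjoint from $V(H)$.

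Next I would pin down the colors of the bonds via a short case analysis, supplying the required counterpart $H$-paths through Lemma~\ref{lem:elemcomb2nonposi} applied to the factor-connected comb $(G, T)[V(H)]$. If $s, t \in A$, Lemma~\ref{lem:qcomb2path}(i) gives $w_F(P) = 0$, and concatenating $P$ with an $H$-path of weight $0$ yields a circuit of weight $0$ whose necks contradict Lemma~\ref{lem:circuit}. If $s \in A$ and $t \in B$, Lemma~\ref{lem:qcomb2path}(ii) restricts $w_F(P)$ to $\{1, -1\}$; since the neck at $t$ is not in $F$, part~(ii)(b) excludes $-1$, so $w_F(P) = 1$, and concatenating with an $H$-path of weight $-1$ again produces a weight-$0$ circuit and the same contradiction (the case $s \in B, t \in A$ is symmetric). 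Therefore $s, t \in B$. A final application of Lemma~\ref{lem:qcomb2path}(iii), combined with both necks lying outside $F$, excludes $w_F(P) \in \{-2, 0\}$ (each would force at least one end-edge into $F$), yielding $w_F(P) = 2$.

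It remains to show $\distgtf{G}{T}{F}{s}{t} = 0$. The upper bound $\le 0$ is immediate from a weight-$0$ path in $H$ supplied by Lemma~\ref{lem:elemcomb2nonposi}(iii). For the lower bound, Lemma~\ref{lem:qcomb2dist}(iii) and parity leave only the possibility $-2$, realized by some $s$-$t$ path $Q$ in $G$ with $w_F(Q) = -2$. I would then project $Q$ into $H$: each excursion of $Q$ outside $V(H)$ enters and leaves via edges of $\parcut{G}{V(H)}$, each contributing $w_F = 1$, while its outer portion lies in the separating set $V(G) \setminus V(H)$ and has weight bounded below via Lemma~\ref{lem:qcomb2dist} applied to that subgraft. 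A case-by-case comparison on the colors of the two re-entry points, against a replacement $H$-path furnished by Lemma~\ref{lem:elemcomb2nonposi}, shows that the replacement has no larger $F$-weight. Iterating and shortcutting yields an $H$-path $Q_H$ from $s$ to $t$ with $w_F(Q_H) \le -2$, so $P + Q_H$ is a circuit of weight $0$ whose necks contradict Lemma~\ref{lem:circuit}. The main obstacle is precisely this projection step: uniformly verifying, across the four color-pattern subcases for the re-entry points, that replacing an outside excursion by an $H$-path does not increase $F$-weight. Once that inequality is in place, the weight-$0$ circuit trick used throughout the proof completes the argument.
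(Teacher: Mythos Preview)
The paper does not actually prove this lemma; it is imported from \cite{kita2020bipartite} and stated without proof, so there is no in-paper argument to compare against. That said, your proposal is correct. The first part---ruling out the circuit case, pinning the bonds to $B$, and concluding $w_F(P)=2$---goes through exactly as you describe.

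For the distance claim, two remarks. First, Lemma~\ref{lem:elemcomb2nonposi}(iii) only gives $\lambda_{(G,T)[V(H)]}(s,t)\in\{0,-2\}$, so you should say ``a path of weight $\le 0$'' rather than ``a weight-$0$ path'' for the upper bound; this is harmless. Second, your projection step does work, but the four-case analysis on the colors of the re-entry points is avoidable. Each excursion of the hypothetical weight-$(-2)$ path $Q$ outside $V(H)$ is itself an $F$-balanced round ear relative to $H$ (it is a subpath of the $F$-balanced path $Q$, with both necks in $\parcut{G}{V(H)}\setminus F$), so by the very argument you just gave for $P$, its bonds lie in $B$ and its $F$-weight is exactly $2$. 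Replacing each such excursion by an $F$-shortest $H$-path between its bonds (weight $\le 0$ by Lemma~\ref{lem:elemcomb2nonposi}(iii)) therefore never increases weight; shortcutting the resulting $H$-walk---each closed subwalk decomposes into circuits of nonnegative $F$-weight by Lemma~\ref{lem:minimumjoin}---yields an $H$-path $Q_H$ with $w_F(Q_H)\le -2$, and then $P+Q_H$ is the weight-$0$ circuit through the non-allowed necks that you want. This reuses the first half of the proof and sidesteps both the appeal to Lemma~\ref{lem:qcomb2dist} on the complementary subgraft and the color case split you flagged as the main obstacle.
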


Lemmas~\ref{lem:guide2comp} and \ref{lem:earend2sim} derive Theorem~\ref{thm:upper}. 

\begin{theorem} \label{thm:upper} 
Let $(G, T; A, B)$ be a comb, and let  $G_0 \in\tcomp{G}{T}$. 
Let $K$ be a connected component of $G[\vup{G_0}]$. 
Then, there exists $S\in \pargtpart{G}{T}{G_0}$ with $S\subseteq B$ 
such that $\parNei{G}{K} \cap V(G_0) \subseteq S$ holds. 
\end{theorem}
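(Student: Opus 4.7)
The plan is to show that for each neighbor $u \in \parNei{G}{K} \cap V(G_0)$ we can build an $F$-balanced round ear in $(G, T)$ relative to $V(G_0)$ with $u$ as one of its bonds and a fixed ``reference'' bond in $V(G_0) \cap B$; invoking Lemma~\ref{lem:earend2sim} then forces both bonds into $V(G_0) \cap B$ and equates them under $\gtsim{G}{T}$. Since Lemma~\ref{lem:comb2kl} gives that $\pargtpart{G}{T}{G_0}$ consists of $V(G_0) \cap A$ together with a partition of $V(G_0) \cap B$, this places every such neighbor inside a single class $S \in \pargtpart{G}{T}{G_0}$ with $S \subseteq B$.

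Fix a minimum join $F$. For each $v \in V(K)$ the factor-component $H_v \ni v$ satisfies $G_0 \prec H_v$, so Lemma~\ref{lem:guide2comp} supplies a guide $\guide{X_v}{P_v}{F}$ for $G_0$ with $V(H_v) \subseteq X_v$, carrying an $F$-balanced path $Q_v$ from $v$ to some bond $b_v \in V(G_0) \cap B$ of $P_v$ of $F$-weight $0$ (when $v \in B$, with the edge at $v$ in $F$) or $1$ (when $v \in A$). First I would rule out $A$-neighbors: if $u \in V(G_0) \cap A$ were adjacent to $v \in V(K)$, then $v \in B$ by bipartiteness and $uv \notin F$ (since $uv$ joins distinct factor-components and is therefore not allowed), so $R := uv + Q_v$ is a simple path (its internal vertices lie in $X_v \subseteq V(G) \setminus V(G_0)$), is $F$-balanced at $v$ (the $F$-edge of $Q_v$ at $v$ balances the non-$F$ edge $uv$), and has $w_F(R) = 1$; thus $R$ is an $F$-balanced round ear relative to $V(G_0)$ with bonds $u \in A$ and $b_v \in B$, contradicting Lemma~\ref{lem:earend2sim}.

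For the equivalence-class claim I would produce a single guide $\guide{X}{P}{F}$ for $G_0$ with $V(K) \subseteq X$. Because each factor-component is internally connected via allowed edges, $V(K)$ is the union of $V(H)$ over the set $\mathcal{H}_K$ of factor-components $H \succ G_0$ with $V(H) \subseteq V(K)$. Using Lemma~\ref{lem:crset2path} one checks that the union of critical sets for $G_0$ is again a critical set, so $Y := \bigcup_{H \in \mathcal{H}_K} X_H$, with $X_H$ a critical set for $G_0 \preceq H$, is a critical set for $G_0$ containing $V(K)$. Applying Lemma~\ref{lem:crset2ear} gives an $F$-balanced ear decomposition $\mathcal{P}$ of $Y$, and iteratively combining Lemmas~\ref{lem:guide2ear} and \ref{lem:guide2increment} yields a single guide $\guide{X}{P}{F}$ with $V(K) \subseteq X$. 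With this guide in hand, for each neighbor $u_i \in V(G_0) \cap B$ of $K$ via $v_i \in V(K) \cap A$ (bipartiteness again), the path $u_i v_i + Q_{v_i}$ along this guide is an $F$-balanced round ear relative to $V(G_0)$ of weight $2$ with bonds $u_i$ and some $b_i \in \{s, t\}$, where $s, t$ are the two bonds of $P$; Lemma~\ref{lem:earend2sim} applied both to these ears and to $P$ itself yields $u_1 \gtsim{G}{T} b_1 \gtsim{G}{T} s \gtsim{G}{T} t \gtsim{G}{T} b_2 \gtsim{G}{T} u_2$, completing the proof.

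The main obstacle is the single-guide construction. Starting from a guide covering one factor-component, extension across a non-allowed ``crossing'' edge between two different factor-components inside $K$ cannot be performed via Lemma~\ref{lem:guide2increment} using that edge alone, since the effectively-$F$-balanced matching requirement typically fails on such an edge. One instead has to extend through the larger ear-graft structure supplied by the ear decomposition $\mathcal{P}$ of $Y$, assembling ears of $\mathcal{P}$ into effectively $F$-balanced ears relative to the growing $X$ before appealing to Lemma~\ref{lem:guide2increment}; verifying that this assembly preserves all guide conditions is the technical crux.
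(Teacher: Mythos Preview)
Your proposal has a genuine gap that you yourself flag: the construction of a single guide $\guide{X}{P}{F}$ for $G_0$ with $V(K)\subseteq X$ is never carried out, and Lemmas~\ref{lem:guide2ear} and~\ref{lem:guide2increment} by themselves do not deliver it. Lemma~\ref{lem:guide2ear} produces, for each ear $P$ of the decomposition, a guide whose underlying ear $Q$ relative to $G_0$ may change from $P$ to $P$; Lemma~\ref{lem:guide2increment} only enlarges the $X$-part of a guide while keeping its $Q$ fixed. To merge two guides $\guide{X_1}{Q_1}{F}$ and $\guide{X_2}{Q_2}{F}$ with different $Q_1,Q_2$ into one, you would need a connecting effectively $F$-balanced ear relative to the current $X$, and producing such a thing across arbitrary pieces of $K$ is not immediate---indeed, knowing that the bonds of $Q_1$ and $Q_2$ lie in the same member of $\pargtpart{G}{T}{G_0}$ is essentially what the theorem asserts, so there is a real danger of circularity here.

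The paper sidesteps this entirely. Instead of trying to cover $V(K)$ with a single guide, it takes the family $\mathcal{T}$ of guides supplied by Lemma~\ref{lem:guide2comp} (one for each vertex of $K$), partitions $\mathcal{T}$ according to which class $S\in\pargtpart{G}{T}{G_0}$ contains the bonds of each guide (Lemma~\ref{lem:earend2sim} makes this well defined), and then proves directly that guides coming from distinct classes have disjoint $X$-sets with no edges of $G$ between them. Connectedness of $K$ then forces all of $V(K)$ into the $X$-sets of a single subfamily $\mathcal{T}_S$; the final inclusion $\parNei{G}{K}\cap V(G_0)\subseteq S$ follows by exactly the ear-building trick you describe. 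Your elimination of $A$-neighbours and your endgame are correct; what is missing is precisely this disjointness/partition argument, which replaces the single-guide construction rather than proving it.
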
 
\begin{proof} %thm:upper
Let $F$ be a minimum join of $(G, T; A, B)$.  
According to Lemma~\ref{lem:guide2comp}, 
for each $v\in V(K)$, there is a guide $\guide{X_v}{Q_v}{F}$ for $G_0$ with $v\in X_v$. 
Hence,  
there is a family $\mathcal{T}$ of guides for $G_0$ such that $V(K) \subseteq \bigcup \{ X: \guide{X}{Q}{F} \in \mathcal{T} \}$. 
For each $S\in \pargtpart{G}{T}{G_0}$ with $S\subseteq B$, 
we define the subfamily $\mathcal{T}_S$ of $\mathcal{T}$ as follows: 
Let $\guide{X}{Q}{F}  \in \mathcal{T}_S$ if the bonds of $Q$ are in $S$. 
Lemma~\ref{lem:earend2sim} implies that $\{ \mathcal{T}_S : S \in \pargtpart{G}{T}{G_0}  {\mbox  {\rm  ~with~}} S\subseteq B \}$ 
is a partition of $\mathcal{T}$. 

\begin{pclaim} \label{claim:disjoint} 
Let $S$ and $T$ be distinct members from $\pargtpart{G}{T}{G_0}$ with $S, T\subseteq B$. 
Then, $X_S \cap X_T = \emptyset$ for every $\guide{X_S}{Q_S}{F} \in\mathcal{T}_S$ and every $\guide{X_T}{Q_T}{F} \in\mathcal{T}_T$. 
\end{pclaim} 
\begin{proof} 
Suppose that the claim fails, and let $x \in X_S \cap X_T$. 
Let $P$ be a path between $x$ and a bond $t$ of $Q_T$ that is taken along the guide $\guide{X_T}{Q_T}{F}$.  
Trace $P$ from $t$, and let $x'$ be the first encountered vertex in $X_S$. 
Let $P'$ be a path between $x'$ and a bond $s$ of $Q_S$ that is taken along the guide $\guide{X_S}{Q_S}{F}$. 
Then, $P + P'$ is an $F$-balanced ear relative to $G_0$ whose bonds are $s\in S$ and $t\in T$. 
This contradicts Lemma~\ref{lem:earend2sim}. 
Thus, the claim is proved. 
\end{proof} 

Under Claim~\ref{claim:disjoint}, the next claim follows. 
\begin{pclaim} \label{claim:noadjacent} 
Let $S$ and $T$ be distinct members from $\pargtpart{G}{T}{G_0}$ with $S, T\subseteq B$, 
and let $\guide{X_S}{Q_S}{F} \in\mathcal{T}_S$ and $\guide{X_T}{Q_T}{F} \in\mathcal{T}_T$. 
Then, $E_G[X_S, X_T] = \emptyset$. 
\end{pclaim} 
\begin{proof} 
Suppose $E_G[X_S, X_T] \neq \emptyset$, 
and let $u_S \in X_S$ and $u_T \in X_T$ be vertices with $u_Su_T\in E(G)$.   
For each $\alpha \in \{S, T\}$, 
let $P_\alpha$ be a path between $u_\alpha$ and a bond $r_\alpha$ of $Q_\alpha$ that is taken along the guide $\guide{X_\alpha}{Q_\alpha}{F}$. 
According to Claim~\ref{claim:disjoint}, $P_S$ and $P_T$ are disjoint. 
Hence, $P_S + u_Su_T + P_T$ is an $F$-balanced ear relative to $G_0$ whose bonds are $r_S \in S$ and $r_T \in T$. 
This contradicts Lemma~\ref{lem:earend2sim}. 
Hence, the claim is proved. 
\end{proof} 

From Claims~\ref{claim:disjoint} and \ref{claim:noadjacent}, the next claim follows. 
\begin{pclaim} \label{claim:neighbor} 
There exists $S \in \pargtpart{G}{T}{G_0}$ with $S\subseteq B$ such that $V(K) = \bigcup \{ X : \guide{X}{Q}{F} \in \mathcal{T}_S\}$. 
\end{pclaim}

We now prove the lemma by proving  $\parNei{G}{K}\cap V(G_0) \subseteq S$.  
Contrary to the statement, suppose that there exists $v\in \parNei{G}{K} \cap V(G_0)$ with $v\not\in S$. 
Let $u\in V(K)$ be a vertex with $uv\in E(G)$. 
Under Claim~\ref{claim:neighbor},  let $\guide{X}{Q}{F} \in \mathcal{T}_S$ be a guide for $G_0$ with $u\in X$. 
Let $P$ be an $F$-balanced path between $u$ and a bond $r$ of $Q$ that is taken along the guide $\guide{X}{Q}{F}$. 
Then, $P + uv$ is an $F$-balanced ear relative to $G_0$ whose bonds are $r$ and $v$. 
This contradicts Lemma~\ref{lem:earend2sim}. 
The theorem is proved. 
\end{proof}

\begin{ac} 
This study is supported by JSPS KAKENHI Grant Number 18K13451. 
\end{ac}

\bibliographystyle{splncs03.bst}
\bibliography{tbicath.bib}

\end{document}